\newcommand*\circled[1]{\tikz[baseline=(char.base)]{
            \node[shape=circle,draw,inner sep=2pt] (char) {#1};}}
\title{Functions with a maximal number of finite invariant or internally-1-quasi-invariant sets or supersets}
\author{Nizar El Idrissi and Samir Kabbaj}
\newcommand{\Addresses}{{% additional braces for segregating \footnotesize
  \bigskip
  \footnotesize

  \textbf{Nizar El Idrissi.}
  \par\nopagebreak Laboratoire : Equations aux dérivées partielles, Algèbre et Géométrie spectrales.
  \par\nopagebreak
  Département de mathématiques, faculté des sciences, université Ibn Tofail, 14000 Kénitra.\par\nopagebreak 
  \textit{E-mail address} : \texttt{nizar.elidrissi@uit.ac.ma}

  \medskip

  \textbf{Pr. Samir Kabbaj.} \par\nopagebreak Laboratoire : Equations aux dérivées partielles, Algèbre et Géométrie spectrales.
  \par\nopagebreak
  Département de mathématiques, faculté des sciences, université Ibn Tofail, 14000 Kénitra.\par\nopagebreak 
  \textit{E-mail address} : \texttt{samir.kabbaj@uit.ac.ma}
  
}}
\theoremstyle{plain}
\newtheorem{proposition}{Proposition}[section]
\newtheorem{corollary}{Corollary}[section]
\newtheorem{lemma}{Lemma}[section]
\theoremstyle{definition}
\newtheorem{definition}{Definition}[section]
\newtheorem{example}{Example}[section]   %% And a not so common one.
\theoremstyle{remark}
\newtheorem{remark}{Remark}[section]
\def\keywords{\xdef\@thefnmark{}\@footnotetext}
\begin{document}
\newpage
\maketitle
%%%%%%%%%%%%%%%%%%%%%%%%%%%
% abstract, keywords and Subject classification are optional.
%%%%%%%%%%%%%%%%%%%%%%%%%%%
\begin{abstract}
A relaxation of the notion of invariant set, known as $k$-quasi-invariant set, has appeared several times in the literature in relation to group dynamics. The results obtained in this context depend on the fact that the dynamic is generated by a group. In our work, we consider the notions of invariant and 1-internally-quasi-invariant sets as applied to an action of a function $f$ on a set $I$. We answer several questions of the following type, where $k \in \{0,1\}$: what are the functions $f$ for which every finite subset of $I$ is internally-$k$-quasi-invariant? More restrictively, if $I = \mathbb{N}$, what are the functions $f$ for which every finite interval of $I$ is internally-$k$-quasi-invariant? Last, what are the functions $f$ for which every finite subset of $I$ admits a finite internally-$k$-quasi-invariant superset? This parallels a similar investigation undertaken by C. E. Praeger in the context of group actions.
\end{abstract}

% Most people don't use these, so they are "commented out"
% by starting the lines with a "%"
%\begin{keywords}
%   \LaTeX, typesetting
%\end{keywords}

%\begin{AMS}
%   50C60, 18C25
%\end{AMS}
\keywords{2020 \emph{Mathematics Subject Classification.} 37D10}
\keywords{\emph{Key words and phrases.} orbit, discrete dynamics, invariant set, quasi-invariant set, almost-invariant set, bounded movement, totally ordered set.}

%%%%%%%%%%%%%%%%%%%%%%
% % Here is the start of the Text
%%%%%%%%%%%%%%%%%%%%%%
\tableofcontents

\section{Introduction}

Invariant sets play an essential role in the qualitative study of dynamical systems \cite{KatokHasselblatt,BrinStuck}. In group dynamics, a relaxation of this notion, known as \textit{$k$-quasi-invariant sets}, or \textit{$k$-almost-invariant sets} (where $k \in \mathbb{N}$), appears in the works \cite{Brailovsky, Praeger, BrailovskyPasechenikPraeger, Kharazishvili, KimKim, AlaeiyanYoshiara, AlaeiyanRazzaghmanieshi} and the references therein. Most of the results contained in these articles crucially depend on the fact that $G$ is a group. \\
In our work, we depart from the group setting by extending the notion of a $k$-quasi-invariant set under a group action to the context of an action of a set $A$ on another set $I$ (this simply consists of a function $\rho : A \times I \to I$ with no additional requirement). The lack of bijectivity of the functions $\{\rho(a,\cdot)\}_{a \in A}$ forces us to make the following definitions. If we denote by $|.|$ the cardinality function and $\Lambda^a := \{\rho(a,x) : x \in \Lambda\}$ for $a \in A$ and $\Lambda \subseteq I$, then $\Lambda$ is \textit{externally-$k$-quasi-invariant} under the action $\rho$ if
\[ \forall a \in A : |\Lambda^a \setminus \Lambda| \leq k, \] 
or equivalently
\[ \forall a \in A : \exists P \subseteq I : |P| \leq k \text{ and } \Lambda^a \subseteq \Lambda \cup P, \] 
and is \textit{internally-$k$-quasi-invariant} under the action $\rho$ if
\[ \forall a \in A : \exists P \subseteq I : |P| \leq k \text{ and } (\Lambda \setminus P)^a \subseteq \Lambda. \] 
We focus in this article on internally-$k$-quasi-invariant sets when $A$ is a singleton and $k \in \{0,1\}$. In this case, the action $\rho : A \times I \to I$ becomes just a function $f : I \to I$, and therefore a set $\Lambda$ is internally-$0$-quasi-invariant under the action of $f$ if it is a (forward)-invariant set of $f$, i.e.
\begin{equation}
f(\Lambda) \subseteq \Lambda
\end{equation}
and is internally-$1$-quasi-invariant under the action of $f$ if
\begin{equation}	
\label{eq-1}
\exists a \in \Lambda : f(\Lambda \setminus \{a\}) \subseteq \Lambda.
\end{equation}
Note that the notions of external or internal $k$-quasi-invariance under the action of $f$ that we have just defined, differ from external or internal $k$-quasi-invariance under the action of the monoid $\mathbb{N}$ (by iterates of a function $f$), which we will not be concerned about in this article. \\ 

Most of the findings in \cite{Brailovsky, Praeger, BrailovskyPasechenikPraeger, Kharazishvili, KimKim, AlaeiyanYoshiara, AlaeiyanRazzaghmanieshi} about $k$-quasi-invariant sets under a group action tend to be wrong for internally-$k$-quasi-invariant sets under the action of $f : I \to I$. In \cite{Praeger}, the author shows that if every finite subset of $I$ is $k$-quasi-invariant under a group action with no fixed points, then $I$ must be finite. The analogous statement for internally-1-quasi-invariant sets under the action of a single function $f$ is wrong, as can be attested by the function $succ : \mathbb{N} \to \mathbb{N}$. However, the problem considered by the author in \cite{Praeger} is interesting, and can be adapted to our setting: what are the functions $f : I \to I$ for which every finite subset of $I$ is internally-$k$-quasi-invariant? More restrictively, if $I = \mathbb{N}$, what are the functions $f : I \to I$ for which every finite interval of $I$ is internally-$k$-quasi-invariant? Last, what are the functions $f : I \to I$ for which every finite subset of $I$ admits a finite internally-$k$-quasi-invariant superset? In this article, we investigate these types of questions in the $k \in \{0,1\}$ cases. Besides, we inform the reader that the idea of this article emerged when we were looking for an example to a proposition related to linear independence in (\cite{ElIdrissiKabbaj}, Proposition 6.1 and Example 6.2).  

\section{Notations}
In the sequel, $\mathbb{N}$ denotes the set $\{0,1,2,\cdots\}$ of natural numbers including 0. $\mathbb{N}^*$ denotes $\mathbb{N} \setminus \{0\}$. \\
If $A$ is a set, we denote by $|A|$ the cardinality of $A$, $\mathcal{P}(A)$ the powerset of $A$, $\mathcal{P}_{\omega}(A)$ the set $\{B \subseteq A : |B| < \infty\}$, $\mathcal{P}_{\omega,*}(A)$ the set $\{B \subseteq A : 0 < |B| < \infty\}$, and $\mathcal{P}_{\omega,n^+}(A)$ the set $\{B \subseteq A : n \leq |B| < \infty\}$. \\
If $(A,\leq)$ is a non-empty totally ordered set, we denote by $Int_{\omega,*}(A)$ the set $\{ [a,b] : a \leq b, | [a,b] | < \infty\}$, and by $Int_{\omega, n^+}(A)$ the set $\{ [a,b] : a \leq b, n \leq | [a,b] | < \infty\}$. \\
If $A$ is a set, $\phi : A \to A$ a self map and $n \in \mathbb{N}^*$, we denote by $\phi^n$ the composition of $\phi$ with itself $n$ times : $\phi \circ \cdots \circ \phi : A \to A$. In addition, we define $\phi^0$ to be the identity function on $A$. Moreover, if $a \in A$, we denote by $\mathcal{O}^{+}_\phi(a)$ the forward orbit of $a$ under the iterates of $\phi$ : $\{\phi^n(a) : n \in \mathbb{N}\}$. 

\section{Preliminaries on discrete time forward orbits}

\subsection{Elementary lemmas}

The following well-known lemmas are easy to prove and are only reminded for convenience.

\begin{lemma} 
\label{lemmaDistinct}
Let $I$ be an infinite set, $a \in I$ and $\phi : I \to I$. Then 
\[ \mathcal{O}^{+}_\phi(a) \text { is infinite } \Leftrightarrow a, \phi(a), \phi(\phi(a)), \cdots \text{ are distinct} \]
or equivalently
\[ \mathcal{O}^{+}_\phi(a) \text { is finite } \Leftrightarrow \text{ the sequence } a, \phi(a), \phi(\phi(a)), \cdots \text{ is eventually periodic}. \]
\end{lemma}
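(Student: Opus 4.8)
The plan is to establish the first biconditional, $\mathcal{O}^{+}_\phi(a)$ infinite $\Leftrightarrow$ the iterates $a, \phi(a), \phi^2(a), \dots$ are pairwise distinct, and then to read off the second biconditional as its logical negation, once I observe that for a forward orbit the failure of distinctness coincides with eventual periodicity. Throughout I abbreviate $x_n := \phi^n(a)$, so that $\mathcal{O}^{+}_\phi(a) = \{x_n : n \in \mathbb{N}\}$ and $x_{n+1} = \phi(x_n)$.

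The implication ``distinct $\Rightarrow$ infinite'' is immediate: if the $x_n$ are pairwise distinct, then $n \mapsto x_n$ is an injection of $\mathbb{N}$ into $\mathcal{O}^{+}_\phi(a)$, so the orbit is infinite. (Here the hypothesis that $I$ is infinite is what makes room for this to be possible, but the implication itself needs nothing about $I$.)

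For the converse I would argue by contraposition, proving ``not distinct $\Rightarrow$ finite''. Suppose $x_m = x_n$ for some $m < n$, and set $d := n - m \geq 1$. The key step is that $\phi$ preserves equality of iterates: applying $\phi^k$ to both sides of $x_m = x_n$ gives $x_{m+k} = x_{n+k} = x_{m+k+d}$ for every $k \geq 0$, so that $x_j = x_{j+d}$ for all $j \geq m$. An easy induction on $j$ then shows that every $x_j$ with $j \geq m$ already occurs among $x_m, \dots, x_{n-1}$, whence $\mathcal{O}^{+}_\phi(a) = \{x_0, \dots, x_{n-1}\}$ is finite. This same computation exhibits eventual periodicity of the sequence (pre-period at most $m$, period dividing $d$), while the reverse direction is trivial since an eventually periodic sequence must repeat some value; hence ``not distinct'' and ``eventually periodic'' are interchangeable for forward orbits, and the second displayed equivalence is exactly the negation of the first.

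I do not anticipate a genuine obstacle: the statement is a standard consequence of the observation that a self-map cannot separate a collision of iterates once it has occurred. The only point deserving a little care is the bookkeeping that converts a single repetition $x_m = x_n$ simultaneously into finiteness of the orbit and into eventual periodicity of the sequence, so that the two displayed equivalences are recognized as one and the same statement phrased in complementary ways.
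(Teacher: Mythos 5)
Your proposal is correct and follows essentially the same route as the paper: both hinge on the observation that a single coincidence $\phi^m(a)=\phi^n(a)$ propagates forward under $\phi$ and collapses the whole orbit into the finite set $\{a,\phi(a),\dots,\phi^{n-1}(a)\}$, the only cosmetic differences being that you phrase it as a contraposition (collision $\Rightarrow$ finite) where the paper runs a proof by contradiction, and you replace its explicit division-with-remainder by an equivalent induction (repeated subtraction of the period $d$). You also spell out the link to eventual periodicity, which the paper leaves implicit in its ``or equivalently''; this is a welcome but minor addition.
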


\begin{proof}
($\Rightarrow$) Suppose that $\mathcal{O}^{+}_\phi(a)$ is infinite. Suppose that $\phi^{n}(a) = \phi^{m}(a)$ for some $n < m$. By induction, we have $\phi^{n+j}(a) = \phi^{m+j}(a)$ for all $j \in \mathbb{N}$. Let $e \geq n$. Let $e - n = q(m-n) + r$ be the division with remainder of $e - n \in \mathbb{N}$ by $m-n \in \mathbb{N}^*$. If $q \geq 1$, we have $\phi^e(a) = \phi^{n + q(m-n) + r}(a) = \phi^{m + (q-1)(m-n) + r}(a) = \phi^{n + (q-1)(m-n) + r}(a)$. By immediate induction, we have that $\phi^e(a) = \phi^{n+r}(a)$, where $0 \leq r < m-n$. So $\mathcal{O}^{+}_\phi(a) = \{a, \phi(a), \cdots, \phi^{m-1}(a)\}$ is finite, contradiction. \\
($\Leftarrow$) This is clear.
\end{proof}

Therefore, for any map $\phi : I \to I$ and $a \in I$, the following simple conjugation result for $\phi_{|\mathcal{O}^{+}_\phi(a)}$ follows.

\begin{lemma}
Let $I$ be an infinite set, $a \in I$ and $\phi : I \to I$.
\begin{enumerate}
\item Suppose that $\mathcal{O}^{+}_\phi(a)$ is infinite. Then $\phi_{|\mathcal{O}^{+}_\phi(a)}$ is conjugate to $succ : \begin{cases} \mathbb{N} &\to \mathbb{N} \\ n &\mapsto n+1 \end{cases}$.
\item Suppose that $\mathcal{O}^{+}_\phi(a)$ is finite. Then ${\exists (u,v) \in \mathbb{N}\times \mathbb{N}^*}$ and a bijection ${\alpha :  [0,u+v-1] \to \mathcal{O}^{+}_\phi(a)}$ such that 
\[ {\mathcal{O}^{+}_\phi(a) = \{a,\phi(a),\cdots,\phi^{u-1}(a)\} \sqcup \{\phi^u(a), \cdots, \phi^{u+v-1}(a)\}}, \]
\[ \phi_ {| \{a,\phi(a),\cdots,\phi^{u-1}(a)\}} \text{ is conjugated to } succ_{[u]} : \begin{cases} [0,u-1] &\to [1,u] \\ n &\mapsto n+1 \end{cases} \text{ by } \alpha_{|[0,u-1]}, \]
and 
\begin{align*}
&\phi_{| \{\phi^u(a),\cdots,\phi^{u+v-1}(a)\}} \text{ is conjugated to } \\
&\quad cycle_{[v]} :  \begin{cases} [u,u+v-1] &\to [u,u+v-1] \\ n &\mapsto n+1 \text { if } n \in [u,u+v-2] \\ u+v-1 &\mapsto u \end{cases} \text{ by } \alpha_{|[u,u+v-1]}. 
\end{align*}
\end{enumerate}
\end{lemma}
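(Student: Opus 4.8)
The plan is to realize both conjugations through the single natural candidate map $\alpha(n) = \phi^n(a)$, reducing everything to locating the first index at which the orbit sequence repeats. Part (1) is immediate: if $\mathcal{O}^{+}_\phi(a)$ is infinite, Lemma \ref{lemmaDistinct} gives that $a,\phi(a),\phi^2(a),\dots$ are pairwise distinct, so $\alpha : \mathbb{N} \to \mathcal{O}^{+}_\phi(a)$, $n \mapsto \phi^n(a)$, is a bijection. To see that it conjugates $succ$ to $\phi_{|\mathcal{O}^{+}_\phi(a)}$ I would simply check the intertwining identity $\alpha(succ(n)) = \alpha(n+1) = \phi^{n+1}(a) = \phi(\alpha(n))$ for every $n$, after noting that $\phi$ indeed sends $\mathcal{O}^{+}_\phi(a)$ into itself.

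For part (2), where $\mathcal{O}^{+}_\phi(a)$ is finite, the plan is first to pin down the repetition. By Lemma \ref{lemmaDistinct} the sequence $(\phi^n(a))_n$ is not injective, so the set $\{n : \exists\, m > n,\ \phi^m(a) = \phi^n(a)\}$ is non-empty; I would let $u$ be its minimum and then set $v = \min\{m - u : m > u,\ \phi^m(a) = \phi^u(a)\}$, so that $\phi^{u+v}(a) = \phi^u(a)$ with $v \geq 1$ and $u \geq 0$. Exactly as in the proof of Lemma \ref{lemmaDistinct}, iterating this identity shows that $\mathcal{O}^{+}_\phi(a) = \{a,\phi(a),\dots,\phi^{u+v-1}(a)\}$, so the orbit has at most $u+v$ elements.

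The crux is to show these $u+v$ listed elements are pairwise distinct, which is what makes $\alpha : [0,u+v-1] \to \mathcal{O}^{+}_\phi(a)$, $n \mapsto \phi^n(a)$, a bijection. Suppose $\phi^i(a) = \phi^j(a)$ with $0 \leq i < j \leq u+v-1$. If $i < u$ this contradicts the minimality of $u$, since $i$ would witness a strictly earlier repetition. The remaining, and genuinely delicate, case is $u \leq i < j \leq u+v-1$: here I cannot simply cancel a power of $\phi$, because $\phi$ need not be injective. Instead I would apply $\phi^{u+v-j}$ to both sides; using $\phi^{u+v}(a) = \phi^u(a)$ on the right, this yields $\phi^{\,u+v-(j-i)}(a) = \phi^u(a)$ with $1 \leq j-i \leq v-1$, exhibiting a period strictly smaller than $v$ and contradicting the minimality of $v$. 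I expect this rotation-and-minimality argument to be the main obstacle, precisely because the lack of injectivity forbids the naive cancellation.

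Finally, with distinctness in hand, both conjugations are routine intertwining checks along $\alpha$. For the tail I would verify $\alpha(succ_{[u]}(n)) = \alpha(n+1) = \phi^{n+1}(a) = \phi(\alpha(n))$ for $n \in [0,u-1]$ (vacuous when $u=0$); for the cycle the same computation handles $n \in [u,u+v-2]$, while the wrap-around case $n = u+v-1$ uses $\phi(\phi^{u+v-1}(a)) = \phi^{u+v}(a) = \phi^u(a) = \alpha(u) = \alpha(cycle_{[v]}(u+v-1))$, which matches the definition of $cycle_{[v]}$.
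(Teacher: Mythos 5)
Your proof is correct and uses exactly the same conjugating map as the paper, namely $\alpha(n) = \phi^n(a)$; the paper's proof consists of a single sentence naming this map and omitting all verification. Your careful work — the minimality definitions of $u$ and $v$, the rotation argument for pairwise distinctness of $a,\phi(a),\dots,\phi^{u+v-1}(a)$, and the intertwining checks — simply supplies the details the paper leaves implicit.
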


\begin{proof}
The map $\alpha$ that realizes the conjugation is, in both cases, the one that sends $n$ to $\phi^n(a)$.
\end{proof}

Thus, we can deduce

\begin{corollary}
\label{corOrbCofMapBijec}
Let $I$ be an infinite set, $a \in I$ and $\phi : I \to I$. \\
Suppose that $\mathcal{O}^{+}_\phi(a)$ is infinite. \\
Then $\phi_{|\mathcal{O}^{+}_\phi(a)} : \mathcal{O}^{+}_\phi(a) \to \mathcal{O}^{+}_\phi(a) \setminus \{a\}$ is well-defined and bijective.
\end{corollary}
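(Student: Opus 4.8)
The plan is to reduce everything to the distinctness of the iterates supplied by Lemma \ref{lemmaDistinct}. Since $\mathcal{O}^{+}_\phi(a)$ is assumed infinite, that lemma guarantees that the elements $a, \phi(a), \phi^2(a), \dots$ are pairwise distinct; equivalently, $\phi^n(a) = \phi^m(a)$ forces $n = m$. With this in hand, the three things to verify are that the stated map is well defined with the announced codomain, that it is injective, and that it is surjective.

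For well-definedness and the codomain, I would take an arbitrary point of the orbit, written $x = \phi^n(a)$ for some $n \in \mathbb{N}$, and observe that $\phi(x) = \phi^{n+1}(a)$ again lies in $\mathcal{O}^{+}_\phi(a)$. Because $n+1 \geq 1$ while $a = \phi^0(a)$, the distinctness of the iterates gives $\phi^{n+1}(a) \neq a$, so indeed $\phi(x) \in \mathcal{O}^{+}_\phi(a) \setminus \{a\}$. Injectivity is the same observation read backwards: if $\phi(\phi^n(a)) = \phi(\phi^m(a))$, then $\phi^{n+1}(a) = \phi^{m+1}(a)$, whence $n+1 = m+1$ by distinctness, and therefore $\phi^n(a) = \phi^m(a)$.

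For surjectivity, I would pick any $y \in \mathcal{O}^{+}_\phi(a) \setminus \{a\}$, write $y = \phi^m(a)$ for some $m \in \mathbb{N}$, and note that $m \neq 0$ (otherwise $y = a$), so $m \geq 1$. Then $\phi^{m-1}(a) \in \mathcal{O}^{+}_\phi(a)$ and $\phi(\phi^{m-1}(a)) = \phi^m(a) = y$, which exhibits the required preimage inside the domain.

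There is no genuine obstacle here: the argument is essentially a bookkeeping exercise whose only real input is the equivalence in Lemma \ref{lemmaDistinct}. The one point deserving the slightest care is recognizing that distinctness is exactly what does the work in two places, namely preventing $\phi(x)$ from ever equalling $a$ (this is what removes the single point $a$ from the image) and upgrading an equality of values to an equality of indices (this is what yields injectivity). Without the infiniteness hypothesis both of these can fail, as the finite-orbit case of the preceding lemma makes clear.
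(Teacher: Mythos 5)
Your proof is correct: all three verifications (codomain, injectivity, surjectivity) are valid, and each is correctly traced back to the pairwise distinctness of the iterates $a, \phi(a), \phi^2(a), \dots$ given by Lemma \ref{lemmaDistinct}. The route is slightly different from the paper's, though. The paper does not verify bijectivity pointwise at all: it deduces the corollary immediately from the conjugation lemma that precedes it, namely that $\phi_{|\mathcal{O}^{+}_\phi(a)}$ is conjugate to $succ : \mathbb{N} \to \mathbb{N}$ via the bijection $n \mapsto \phi^n(a)$; since $succ$ is a bijection from $\mathbb{N}$ onto $\mathbb{N} \setminus \{0\}$ and the conjugating map sends $0$ to $a$, the restricted map is a bijection onto $\mathcal{O}^{+}_\phi(a) \setminus \{a\}$. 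Your argument is essentially that deduction with the conjugation unwound by hand: the bijectivity of $n \mapsto \phi^n(a)$ is exactly the distinctness you invoke, your injectivity step (equality of values forces equality of indices) is injectivity of that map, and your surjectivity step (peeling off one application of $\phi$ from $\phi^m(a)$, $m \geq 1$) is the transfer of surjectivity of $succ$ onto $\mathbb{N} \setminus \{0\}$. What the paper's approach buys is brevity and reuse of an already-established structural fact; what yours buys is self-containedness, since it bypasses the conjugation lemma entirely and rests only on Lemma \ref{lemmaDistinct}. Both are sound.
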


\begin{lemma}
\label{lemmaTwoOrbInfAndFinDon'tIntersect}
Let $I$ be an infinite set, $(a,b) \in I^2$ and $\phi : I \to I$. \\
Suppose that $\mathcal{O}^{+}_\phi(a)$ is infinite and $\mathcal{O}^{+}_\phi(b)$ is finite. \\
Then $\mathcal{O}^{+}_\phi(a)$ and $\mathcal{O}^{+}_\phi(b)$ don't have an intersection point.
\end{lemma}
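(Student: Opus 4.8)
The plan is to argue by contradiction. Suppose that $\mathcal{O}^{+}_\phi(a)$ and $\mathcal{O}^{+}_\phi(b)$ share a point $c$. Then there exist $m, n \in \mathbb{N}$ with $c = \phi^m(a) = \phi^n(b)$. The central observation is that once two forward orbits meet, their tails must coincide, so the finiteness of one orbit propagates to the other.

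First I would record that the forward orbit of the common point $c$ is contained in each of the two orbits. Indeed, since $c = \phi^n(b)$, we have $\phi^k(c) = \phi^{n+k}(b) \in \mathcal{O}^{+}_\phi(b)$ for every $k \in \mathbb{N}$, so $\mathcal{O}^{+}_\phi(c) \subseteq \mathcal{O}^{+}_\phi(b)$. As $\mathcal{O}^{+}_\phi(b)$ is finite by hypothesis, $\mathcal{O}^{+}_\phi(c)$ is finite as well.

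Next I would decompose the orbit of $a$ using the relation $c = \phi^m(a)$. Splitting the index set $\mathbb{N}$ into $\{0, \ldots, m-1\}$ and $\{k \in \mathbb{N} : k \geq m\}$, and noting that $\phi^{m+j}(a) = \phi^j(c)$, one obtains
\[ \mathcal{O}^{+}_\phi(a) = \{a, \phi(a), \ldots, \phi^{m-1}(a)\} \cup \{\phi^k(a) : k \geq m\} = \{a, \phi(a), \ldots, \phi^{m-1}(a)\} \cup \mathcal{O}^{+}_\phi(c). \]
The first set has at most $m$ elements and is therefore finite, while the second is finite by the previous step. Hence $\mathcal{O}^{+}_\phi(a)$ is a union of two finite sets, so it is finite. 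This contradicts the hypothesis that $\mathcal{O}^{+}_\phi(a)$ is infinite, and the proof is complete.

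I do not expect a serious obstacle here; the only points requiring mild care are the index bookkeeping in the orbit decomposition (including the degenerate case $m = 0$, where the first set is empty and $\mathcal{O}^{+}_\phi(a) = \mathcal{O}^{+}_\phi(c)$ is already finite) and the elementary facts that a subset of a finite set is finite and that a union of two finite sets is finite. One could alternatively phrase the finiteness conclusion through Lemma \ref{lemmaDistinct}, but the direct set-theoretic argument seems cleaner. The essential content is simply that a shared point synchronizes the two orbits from that point onward, so the infinite orbit cannot feed into a finite one.
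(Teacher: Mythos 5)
Your proof is correct and follows essentially the same route as the paper: assume a common point $c = \phi^m(a) = \phi^n(b)$, decompose $\mathcal{O}^{+}_\phi(a) = \{a,\phi(a),\ldots,\phi^{m-1}(a)\} \cup \mathcal{O}^{+}_\phi(c)$ with $\mathcal{O}^{+}_\phi(c) \subseteq \mathcal{O}^{+}_\phi(b)$, and conclude that $\mathcal{O}^{+}_\phi(a)$ is a union of two finite sets, contradicting its infinitude. Your write-up merely names the intersection point and spells out the inclusion and the $m=0$ degenerate case a bit more explicitly than the paper does.
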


\begin{proof}
Assume by way of contradiction that $(\exists u,v \in \mathbb{N}) : \phi^u(a) = \phi^v(b)$. Then $\mathcal{O}^{+}_\phi(a) = \{a,\phi(a),\cdots,\phi^{u-1}(a)\} \cup \mathcal{O}^{+}_\phi(\phi^v(b)) \subseteq  \{a,\phi(a),\cdots,\phi^{u-1}(a)\} \cup \mathcal{O}^{+}_\phi(b)$, which is finite, a contradiction.
\end{proof}

\begin{lemma}
\label{lemmaTwoOrbInfAndCofIntersectCofinitely}
Let $I$ be an infinite set, $(a,b) \in I^2$ and $\phi : I \to I$. \\
Suppose that $\mathcal{O}^{+}_\phi(a)$ is cofinite and $\mathcal{O}^{+}_\phi(b)$ is infinite. \\
Then $\mathcal{O}^{+}_\phi(a)$ and $\mathcal{O}^{+}_\phi(b)$ have at least one intersection point.
\end{lemma}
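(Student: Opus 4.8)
The plan is to proceed by contradiction and to exploit the mismatch between ``finite'' and ``infinite'' directly, since cofiniteness of one orbit is precisely a finiteness statement about its complement. Concretely, I would assume that the two orbits are disjoint, i.e.\ $\mathcal{O}^{+}_\phi(a) \cap \mathcal{O}^{+}_\phi(b) = \emptyset$, and derive a contradiction from cardinality considerations alone; no structural facts about $\phi$ or about the dynamics of its iterates are needed at this point.

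First I would unpack the hypothesis that $\mathcal{O}^{+}_\phi(a)$ is cofinite in $I$: by definition this means that the complement $I \setminus \mathcal{O}^{+}_\phi(a)$ is finite. Next, the disjointness assumption forces the inclusion $\mathcal{O}^{+}_\phi(b) \subseteq I \setminus \mathcal{O}^{+}_\phi(a)$, since under disjointness every point of $\mathcal{O}^{+}_\phi(b)$ fails to lie in $\mathcal{O}^{+}_\phi(a)$ and hence lands in its complement. Finally, I would invoke the elementary fact that a subset of a finite set is finite to conclude that $\mathcal{O}^{+}_\phi(b)$ is finite, contradicting the standing hypothesis that it is infinite. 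Hence the orbits cannot be disjoint and must share at least one point.

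There is essentially no obstacle in this argument: the only real content is the translation of ``cofinite'' into ``finite complement,'' after which the conclusion is forced by the pigeonhole-type observation that an infinite set cannot be contained in a finite one. I would not expect to need Lemma~\ref{lemmaDistinct} or the conjugation results here, although they do guarantee that the hypotheses are consistent, i.e.\ that an infinite orbit $\mathcal{O}^{+}_\phi(b)$ genuinely can exist. The proof is therefore purely set-theoretic, and it contrasts with the preceding lemma, whose argument propagated a hypothetical intersection point forward along the orbit using the semigroup structure of the iterates of $\phi$; no such propagation is required in the present case.
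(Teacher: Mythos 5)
Your proof is correct and is essentially identical to the paper's own argument: both assume disjointness, deduce the inclusion $\mathcal{O}^{+}_\phi(b) \subseteq I \setminus \mathcal{O}^{+}_\phi(a)$, and contradict the infiniteness of $\mathcal{O}^{+}_\phi(b)$ against the finiteness of the complement. No gaps; your closing observations about what is (and is not) needed are also accurate.
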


\begin{proof}
Assume by way of contradiction that $\mathcal{O}^{+}_\phi(a)$ and $\mathcal{O}^{+}_\phi(b)$ don't intersect. Then $\mathcal{O}^{+}_\phi(b) \subseteq I \setminus \mathcal{O}^{+}_\phi(a)$. But this is impossible since $\mathcal{O}^{+}_\phi(b)$ is infinite and $I \setminus \mathcal{O}^{+}_\phi(a)$ is finite. Hence the result.
\end{proof}

\begin{corollary}
\label{corollaryACofOrbIntersectsAnOrbitWhenThisOrbitIsInfinite}
Let $I$ be an infinite set, $(a,b) \in I^2$ and $\phi : I \to I$. \\
Suppose that $\mathcal{O}^{+}_\phi(a)$ is cofinite. \\
Then $\mathcal{O}^{+}_\phi(a)$ intersects $\mathcal{O}^{+}_\phi(b)$ if and only if $\mathcal{O}^{+}_\phi(b)$ is infinite.
\end{corollary}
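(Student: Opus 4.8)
The plan is to prove the biconditional by splitting it into its two implications, each of which I expect to follow almost immediately from one of the two preceding lemmas, together with the elementary observation that a cofinite subset of an infinite set is itself infinite. So the first thing I would record is that, since $I$ is infinite and $I \setminus \mathcal{O}^{+}_\phi(a)$ is finite by the cofiniteness hypothesis, $\mathcal{O}^{+}_\phi(a)$ must be infinite; this fact will be needed to invoke Lemma \ref{lemmaTwoOrbInfAndFinDon'tIntersect} in the harder direction.

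For the implication ($\Leftarrow$), I would simply assume $\mathcal{O}^{+}_\phi(b)$ is infinite. Then both hypotheses of Lemma \ref{lemmaTwoOrbInfAndCofIntersectCofinitely} hold verbatim ($\mathcal{O}^{+}_\phi(a)$ cofinite, $\mathcal{O}^{+}_\phi(b)$ infinite), and that lemma delivers at least one intersection point. Hence the two orbits intersect, which is exactly the desired conclusion for this direction.

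For the implication ($\Rightarrow$), I would argue by contraposition: assume $\mathcal{O}^{+}_\phi(b)$ is \emph{not} infinite, i.e. finite, and show the orbits are disjoint. Using the preliminary observation that $\mathcal{O}^{+}_\phi(a)$ is infinite, I can now apply Lemma \ref{lemmaTwoOrbInfAndFinDon'tIntersect} with the roles matching its statement ($\mathcal{O}^{+}_\phi(a)$ infinite, $\mathcal{O}^{+}_\phi(b)$ finite), which yields that the two orbits have no intersection point. This establishes the contrapositive, and combining the two implications gives the equivalence.

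I do not anticipate a genuine obstacle here, since the corollary is essentially a bookkeeping combination of the two lemmas; the only point requiring a moment's care is the bridge from ``cofinite'' to ``infinite,'' which is what allows the finite-orbit lemma to be applied in the reverse direction rather than requiring a fresh argument.
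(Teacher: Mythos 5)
Your proof is correct and is exactly the paper's argument: the paper's one-line proof ("Combine lemmas \ref{lemmaTwoOrbInfAndFinDon'tIntersect} and \ref{lemmaTwoOrbInfAndCofIntersectCofinitely}") is precisely your decomposition into the two implications, with the forward direction handled by contraposition via the finite-orbit lemma. Your explicit remark that cofiniteness of $\mathcal{O}^{+}_\phi(a)$ in an infinite $I$ yields its infiniteness is the only detail the paper leaves implicit, and it is the right bridge.
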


\begin{proof}
Combine lemmas \ref{lemmaTwoOrbInfAndFinDon'tIntersect} and \ref{lemmaTwoOrbInfAndCofIntersectCofinitely}.
\end{proof}

\begin{lemma}
\label{lemmaOrbCofImplications}
Let $I$ be an infinite set, $(a,b) \in I^2$ and $\phi : I \to I$. \\
Suppose that $\mathcal{O}^{+}_\phi(a)$ is cofinite and $\mathcal{O}^{+}_\phi(b)$ is infinite. Then $\mathcal{O}^{+}_\phi(a) \cap \mathcal{O}^{+}_\phi(b)$, and consequently $\mathcal{O}^{+}_\phi(b)$, are cofinite.
\end{lemma}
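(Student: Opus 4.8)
The plan is to locate a common point of the two orbits and then show that the forward orbit issuing from that point is already cofinite in $I$; everything else follows by a monotonicity observation. First I would invoke Corollary \ref{corollaryACofOrbIntersectsAnOrbitWhenThisOrbitIsInfinite} (equivalently Lemma \ref{lemmaTwoOrbInfAndCofIntersectCofinitely}): since $\mathcal{O}^{+}_\phi(a)$ is cofinite and $\mathcal{O}^{+}_\phi(b)$ is infinite, the two orbits must intersect, so there exist $u,v \in \mathbb{N}$ with $\phi^u(a) = \phi^v(b) =: c$.

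Next I would observe that the tail $\mathcal{O}^{+}_\phi(c)$ is a common sub-orbit. Indeed $\mathcal{O}^{+}_\phi(c) = \mathcal{O}^{+}_\phi(\phi^u(a)) \subseteq \mathcal{O}^{+}_\phi(a)$ and $\mathcal{O}^{+}_\phi(c) = \mathcal{O}^{+}_\phi(\phi^v(b)) \subseteq \mathcal{O}^{+}_\phi(b)$, so that
\[ \mathcal{O}^{+}_\phi(c) \subseteq \mathcal{O}^{+}_\phi(a) \cap \mathcal{O}^{+}_\phi(b) \subseteq \mathcal{O}^{+}_\phi(b) \subseteq I. \]
The crux of the argument is to show that $\mathcal{O}^{+}_\phi(c)$ is cofinite. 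The point is that removing the tail from $\mathcal{O}^{+}_\phi(a)$ discards only finitely many points: every element $\phi^n(a)$ with $n \geq u$ lies in $\mathcal{O}^{+}_\phi(c)$, whence $\mathcal{O}^{+}_\phi(a) \setminus \mathcal{O}^{+}_\phi(c) \subseteq \{a, \phi(a), \ldots, \phi^{u-1}(a)\}$, a finite set. Writing
\[ I \setminus \mathcal{O}^{+}_\phi(c) = \bigl( I \setminus \mathcal{O}^{+}_\phi(a) \bigr) \cup \bigl( \mathcal{O}^{+}_\phi(a) \setminus \mathcal{O}^{+}_\phi(c) \bigr) \]
then exhibits $I \setminus \mathcal{O}^{+}_\phi(c)$ as a union of two finite sets, the first being finite by the cofiniteness of $\mathcal{O}^{+}_\phi(a)$; hence $\mathcal{O}^{+}_\phi(c)$ is cofinite.

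Finally I would close with the elementary remark that cofiniteness passes to supersets: if $S \subseteq T \subseteq I$ and $S$ is cofinite, then $I \setminus T \subseteq I \setminus S$ is finite, so $T$ is cofinite. Applying this with $S = \mathcal{O}^{+}_\phi(c)$ and $T = \mathcal{O}^{+}_\phi(a) \cap \mathcal{O}^{+}_\phi(b)$, and then with $T = \mathcal{O}^{+}_\phi(b)$, yields that both sets are cofinite, as claimed. I do not anticipate a genuine obstacle here; the only point requiring a moment of care is the bound $\mathcal{O}^{+}_\phi(a) \setminus \mathcal{O}^{+}_\phi(c) \subseteq \{a, \ldots, \phi^{u-1}(a)\}$, which is immediate once one notes that all indices $\geq u$ feed into the common tail.
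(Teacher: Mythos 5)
Your proof is correct and follows essentially the same route as the paper: both obtain an intersection point $\phi^u(a)=\phi^v(b)$ (via Lemma \ref{lemmaTwoOrbInfAndCofIntersectCofinitely}) and then observe that everything from that point onward is missed only by the finite set $(I \setminus \mathcal{O}^{+}_\phi(a)) \cup \{a,\ldots,\phi^{u-1}(a)\}$. The sole difference is organizational rather than mathematical --- you isolate the common tail $\mathcal{O}^{+}_\phi(c)$ as a cofinite subset of the intersection and invoke upward closure of cofiniteness, whereas the paper computes $I\setminus(\mathcal{O}^{+}_\phi(a)\cap\mathcal{O}^{+}_\phi(b))$ directly by a De Morgan set-algebra chain; your packaging is in fact a little cleaner and avoids the slips present in that chain.
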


\begin{proof}
First, notice that by lemma \ref{lemmaDistinct}, $a, \phi(a), \phi^2(a), \cdots$ are distinct. From the hypothesis, there exist $u,v \in \mathbb{N}$ such that $\phi^u(a) = \phi^v(b)$. So 
\begin{align*} 
I \setminus (\mathcal{O}^{+}_\phi(b) \cap \mathcal{O}^{+}_\phi(b)) &= \left( I \setminus  \mathcal{O}^{+}_\phi(a) \right) \cup \left[ \left(I \setminus \mathcal{O}^{+}_\phi(\phi^v(b)) \right) \setminus \{b,\cdots,\phi^{v-1}(b)\} \right] \\
&=  \left( I \setminus  \mathcal{O}^{+}_\phi(a) \right) \cup \left[ \left(I \setminus \mathcal{O}^{+}_\phi(\phi^u(a)) \right) \setminus \{b,\cdots,\phi^{v-1}(b)\} \right] \\
&=  \left( I \setminus  \mathcal{O}^{+}_\phi(a) \right) \\
&\quad \cup \left[ \left( (I \setminus \mathcal{O}^{+}_\phi(a)) \cup \{a,\cdots,\phi^{u-1}(a)\} \right) \setminus \{b,\cdots,\phi^{v-1} (b)\} \right] \\
&\subseteq  (I \setminus  \mathcal{O}^{+}_\phi(a)) \cup \{ a,\cdots,\phi^{u-1}(a) \}
\end{align*}
is finite. The fact that $\mathcal{O}^{+}_\phi(b)$ is also cofinite is due to the inclusion $\mathcal{O}^{+}_\phi(a) \cap \mathcal{O}^{+}_\phi(b) \subseteq \mathcal{O}^{+}_\phi(b)$.
\end{proof}

\begin{corollary}
\label{corollaryOrbPhiIAllOrbCof}
Let $I$ be an infinite set, $a \in I$ and $\phi : I \to I$. \\
Suppose that $\mathcal{O}^{+}_\phi(a) = I$. Then $\forall b \in I : \mathcal{O}^{+}_\phi(b)$ is cofinite
\end{corollary}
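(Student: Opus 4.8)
The plan is to give a direct computation resting on the distinctness of the forward iterates of $a$. Since $\mathcal{O}^{+}_\phi(a) = I$ and $I$ is infinite, the orbit $\mathcal{O}^{+}_\phi(a)$ is infinite, so Lemma \ref{lemmaDistinct} tells us that the iterates $a, \phi(a), \phi^2(a), \cdots$ are pairwise distinct. This distinctness is what will let me count the complement exactly.

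Next I would fix an arbitrary $b \in I$. Because $I = \mathcal{O}^{+}_\phi(a) = \{\phi^n(a) : n \in \mathbb{N}\}$, there exists $k \in \mathbb{N}$ with $b = \phi^k(a)$, and by the distinctness above this $k$ is unique. Then I would compute the orbit of $b$ directly: $\mathcal{O}^{+}_\phi(b) = \{\phi^n(\phi^k(a)) : n \in \mathbb{N}\} = \{\phi^m(a) : m \geq k\}$. Consequently the complement satisfies $I \setminus \mathcal{O}^{+}_\phi(b) = \{a, \phi(a), \cdots, \phi^{k-1}(a)\}$, a set of exactly $k$ elements, hence finite, which is precisely the assertion that $\mathcal{O}^{+}_\phi(b)$ is cofinite.

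There is no serious obstacle here: the only point requiring care is that the exponent $k$ with $b = \phi^k(a)$ be well controlled, and this is exactly what Lemma \ref{lemmaDistinct} supplies, since distinctness both pins down $k$ and guarantees that the initial segment $\{a, \cdots, \phi^{k-1}(a)\}$ has cardinality $k$ rather than fewer.

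As a cross-check, the statement can also be extracted from the earlier results: the hypothesis $\mathcal{O}^{+}_\phi(a) = I$ makes $\mathcal{O}^{+}_\phi(a)$ trivially cofinite, and for any $b$ the orbit $\mathcal{O}^{+}_\phi(b)$ is nonempty and therefore meets $\mathcal{O}^{+}_\phi(a) = I$, so Corollary \ref{corollaryACofOrbIntersectsAnOrbitWhenThisOrbitIsInfinite} forces $\mathcal{O}^{+}_\phi(b)$ to be infinite; Lemma \ref{lemmaOrbCofImplications} then yields that $\mathcal{O}^{+}_\phi(b)$ is cofinite. I would present the direct computation as the main argument, since it is the shortest route, and keep this lemma-based derivation as confirmation.
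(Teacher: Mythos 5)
Your proposal is correct and matches the paper's own proof, which simply remarks that the result "can be seen directly or as a consequence of Corollary \ref{corollaryACofOrbIntersectsAnOrbitWhenThisOrbitIsInfinite} and Lemma \ref{lemmaOrbCofImplications}"; your direct computation fleshes out the first route (correctly using Lemma \ref{lemmaDistinct} to identify the complement as the initial segment $\{a,\phi(a),\cdots,\phi^{k-1}(a)\}$), and your cross-check is exactly the paper's second route. Nothing is missing.
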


\begin{proof}
This can be seen directly or as a consequence of corollary \ref{corollaryACofOrbIntersectsAnOrbitWhenThisOrbitIsInfinite} and lemma \ref{lemmaOrbCofImplications} (since $\forall b \in I : b \in \mathcal{O}^{+}_\phi(a)$).
\end{proof}

\begin{remark}
The converse statement is false. Indeed, let $\phi : \begin{cases} \mathbb{N} \cup \{\bullet\} &\to \mathbb{N} \cup \{\bullet\} \\ n &\mapsto n+1 \\ \bullet &\mapsto 1 \end{cases}$. 
Then $\forall a \in \mathbb{N} \cup \{\bullet\} : (\mathcal{O}^{+}_\phi(a) \text{ is cofinite and } \mathcal{O}^{+}_\phi(a) \neq \mathbb{N} \cup \{\bullet\})$.
\end{remark}

\subsection{Nearest element to a finite set lying in the intersection of orbits of elements of that set}

We now associate to each self-map $\phi : I \to I$ on an infinite set $I$, a particular \textit{noncanonical} map $\xi_\phi : D_\phi \to I$, which gives for each $I^* \in D_\phi$ one of the nearest elements to $I^*$ that lies in the intersection of orbits of elements of $I^*$. The value of $\xi_\phi(I^*)$, for $I^* \in D_\phi$, is uniquely determined if all the orbits of elements in $I^*$ are infinite (use lemmas \ref{lemmaIntersOrb} and \ref{lemmaDistinct}), which arises if and only if $\cap_{a \in I^*} \mathcal{O}^{+}_\phi(a)$ is infinite by corollary \ref{corollaryIntersectionOfFinitelyManyOrbitsIsInfinite}. However, if $\cap_{a \in I^*} \mathcal{O}^{+}_\phi(a)$ is finite, which arises if and only if all the orbits of elements in $I^*$ are finite by corollary \ref{lemmaInclusionOfDPhi}, then $\xi_\phi(I^*)$ may be non-uniquely determined as in the case where $I^*$ is precisely a finite cycle of a map $\phi : I \to I$. This definition will be used in the rest of this article.

\begin{definition}
\label{defXi}
Let $I$ be an infinite set and $\phi : I \to I$. \\
Let $D_\phi = \{I^* \in \mathcal{P}_{\omega,*}(I) : \bigcap_{a \in I^*} \mathcal{O}^{+}_\phi(a) \neq \emptyset \} \subseteq \mathcal{P}_{\omega,*}(I)$. \\
For all $I^* \in D_\phi$, $z \in \bigcap_{a \in I^*} \mathcal{O}^{+}_\phi(a)$ and $a \in I^*$, we define $m^z_a$ as $\min\{m \in \mathbb{N} : \phi^m(a) = z\}$. \\
We define a \textit{noncanonical} map $\xi_\phi : D_\phi \to I$ by selecting for all $I^* \in D_\phi$, $\xi_\phi(I^*) \in \bigcap_{a \in I^*} \mathcal{O}^{+}_\phi(a)$ with $\sum_{a \in I^*} m^{\xi_\phi(I^*)}_a$ minimal.
\end{definition}

\begin{example}
If $I$ is infinite and $I^* \in \mathcal{P}_{\omega,*}(I)$ is such that $(\forall a \in I^*) : \mathcal{O}^{+}_\phi(a)$ is cofinite, then $I^* \in D_\phi$, because a finite intersection of cofinite sets is cofinite and hence non-empty.
\end{example}

\begin{lemma}
\label{lemmaIntersOrb}
Let $I$ be an infinite set, $\phi : I \to I$ and $I^* \in \mathcal{P}_{\omega,*}(I)$ such that ${\bigcap_{a \in I^*} \mathcal{O}^{+}_\phi(a) \neq \emptyset}$. Then $\bigcap_{a \in I^*} \mathcal{O}^{+}_\phi(a) = \mathcal{O}^{+}_\phi(\xi_\phi(I^*))$.
\end{lemma}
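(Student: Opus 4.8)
The plan is to prove the two inclusions separately, writing $\xi := \xi_\phi(I^*)$ and abbreviating the intersection as $S := \bigcap_{a \in I^*}\mathcal{O}^{+}_\phi(a)$, and recalling that $\xi \in S$ by Definition \ref{defXi}. The inclusion $\mathcal{O}^{+}_\phi(\xi) \subseteq S$ is the routine direction: since $\xi \in \mathcal{O}^{+}_\phi(a)$ for every $a \in I^*$, I may write $\xi = \phi^{m^\xi_a}(a)$, whence $\phi^n(\xi) = \phi^{n+m^\xi_a}(a) \in \mathcal{O}^{+}_\phi(a)$ for all $n$; thus $\mathcal{O}^{+}_\phi(\xi) \subseteq \mathcal{O}^{+}_\phi(a)$ for each $a$, and intersecting over $a \in I^*$ gives $\mathcal{O}^{+}_\phi(\xi) \subseteq S$. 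Note this direction uses nothing about the minimality property of $\xi$.

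For the reverse inclusion $S \subseteq \mathcal{O}^{+}_\phi(\xi)$ I would argue by contradiction, assuming there is some $z \in S \setminus \mathcal{O}^{+}_\phi(\xi)$ and manufacturing from it a competitor that beats $\xi$ in the sum being minimized. The key observation is purely local to each generator: fix $a \in I^*$; since $z,\xi \in S \subseteq \mathcal{O}^{+}_\phi(a)$, the first-hit times $m^z_a$ and $m^\xi_a$ are both defined, and if we had $m^\xi_a \le m^z_a$ then $\phi^{m^z_a - m^\xi_a}(\xi) = \phi^{m^z_a}(a) = z$ would place $z$ inside $\mathcal{O}^{+}_\phi(\xi)$, contrary to the choice of $z$. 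Hence $m^\xi_a > m^z_a$ for every $a \in I^*$. Summing these strict inequalities yields $\sum_{a \in I^*} m^\xi_a > \sum_{a \in I^*} m^z_a$, which contradicts the minimality of $\sum_{a \in I^*} m^{\xi_\phi(I^*)}_a$ built into the definition of $\xi$. Therefore no such $z$ exists and $S \subseteq \mathcal{O}^{+}_\phi(\xi)$.

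The step I expect to be the main conceptual hurdle is guarding against the finite-orbit (cyclic) regime flagged in the discussion preceding Definition \ref{defXi}: by Lemma \ref{lemmaTwoOrbInfAndFinDon'tIntersect} the nonemptiness of $S$ forces the orbits $\mathcal{O}^{+}_\phi(a)$ to be either all infinite or all finite, and in the finite case $\xi$ need not be unique and forward iteration wraps around the common cycle, so that naive distance bookkeeping such as $m^\xi_a = m^z_a + d$ can fail. The whole point of phrasing the contradiction through the single inequality $m^\xi_a > m^z_a$ read off per generator from first-hit times is precisely that it never appeals to injectivity of the orbit (Lemma \ref{lemmaDistinct}) nor to a finite/infinite split, so one uniform argument covers both regimes and, in particular, the equality holds for every legitimate choice of the noncanonical $\xi$. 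As a final sanity check I would confirm the boundary case $I^* = C$ with $C$ a single finite cycle, where $S = \mathcal{O}^{+}_\phi(\xi) = C$ for any minimizer $\xi$, so the contradiction step is vacuously satisfied and the conclusion still holds.
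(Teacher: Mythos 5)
Your proof is correct and takes essentially the same approach as the paper's: the easy inclusion $\mathcal{O}^{+}_\phi(\xi_\phi(I^*)) \subseteq \bigcap_{a \in I^*} \mathcal{O}^{+}_\phi(a)$ is argued identically, and your contradiction step is just the contrapositive packaging of the paper's direct argument, which uses minimality to extract one $a \in I^*$ with $m^z_a \geq m^{\xi_\phi(I^*)}_a$ and then writes $z = \phi^{m^z_a - m^{\xi_\phi(I^*)}_a}(\xi_\phi(I^*))$, exactly your per-generator first-hit comparison.
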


\begin{proof}
Suppose that $z \in \bigcap_{a \in I^*} \mathcal{O}^{+}_\phi(a)$. So $(\forall a \in I^*) : z = \phi^{m^z_a}(a)$. Also, we have $(\forall a \in I^*) : \xi_\phi(I^*) = \phi^{m^{\xi_\phi(I^*)}_a}(a)$ where the $m^{\xi_\phi(I^*)}_a$'s satisfy a minimality property. \\
Hence $\sum_{a \in I^*} m^{z}_a \geq \sum_{a \in I^*} m^{\xi_\phi(I^*)}_a$, and so $(\exists a \in I^*) : m^{z}_a \geq m^{\xi_\phi(I^*)}_a$. This implies that 
\[ z = \phi^{m^z_a}(a) = \phi^{m^z_a - m^{\xi_\phi(I^*)}_a}(\phi^{m^{\xi_\phi(I^*)}_a}(a)) = \phi^{m^z_a - m^{\xi_\phi(I^*)}_a}(\xi_\phi(I^*)) \in \mathcal{O}^{+}_\phi(\xi_\phi(I^*)). \] 
Conversely, suppose that $z \in \mathcal{O}^{+}_\phi(\xi_\phi(I^*))$. So $(\exists n \in \mathbb{N}) : z = \phi^n(\xi_\phi(I^*))$. Also, we have $(\forall a \in I^*) : \xi_\phi(I^*) = \phi^{m^{\xi_\phi(I^*)}_a}(a)$. Therefore $(\forall a \in I^*) : z = \phi^{n+m^{I^*}_a}(a) \in \mathcal{O}^{+}_\phi(a)$. \\
Hence the two sets are equal.
\end{proof}

\begin{lemma}
\label{lemmaNonEmptyIntersectionOfTwoInfOrbIsInfinite}
Let $I$ be an infinite set, $(a,b,c) \in I^3$ and $\phi : I \to I$. \\
Suppose that $\mathcal{O}^{+}_\phi(a)$ and $\mathcal{O}^{+}_\phi(b)$ are infinite and have at least one intersection point, and that $c \in \mathcal{O}^{+}_\phi(b)$  \\
Then $\mathcal{O}^{+}_\phi(a) \cap \mathcal{O}^{+}_\phi(c)$ is infinite.
\end{lemma}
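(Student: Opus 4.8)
The plan is to exploit the fact that, by Lemma \ref{lemmaDistinct}, an infinite forward orbit consists of pairwise distinct iterates, so that any \emph{tail} of such an orbit stays infinite. First I would record the two pieces of data the hypotheses supply: since $c \in \mathcal{O}^{+}_\phi(b)$ there is some $k \in \mathbb{N}$ with $c = \phi^k(b)$, and since $\mathcal{O}^{+}_\phi(a) \cap \mathcal{O}^{+}_\phi(b) \neq \emptyset$ there are $u,v \in \mathbb{N}$ with $z := \phi^u(a) = \phi^v(b)$. Because $z = \phi^u(a)$ lies on the infinite orbit $\mathcal{O}^{+}_\phi(a)$, the tail $\mathcal{O}^{+}_\phi(z) = \{\phi^n(a) : n \geq u\}$ is itself infinite by Lemma \ref{lemmaDistinct}.

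Next I would compare the positions of $c$ and $z$ along the orbit of $b$ by splitting into two cases according to whether $k \leq v$ or $k > v$. If $k \leq v$, then $z = \phi^v(b) = \phi^{v-k}(\phi^k(b)) = \phi^{v-k}(c) \in \mathcal{O}^{+}_\phi(c)$, so $z$ lies in both $\mathcal{O}^{+}_\phi(a)$ and $\mathcal{O}^{+}_\phi(c)$; consequently the whole tail $\mathcal{O}^{+}_\phi(z)$ is contained in $\mathcal{O}^{+}_\phi(a) \cap \mathcal{O}^{+}_\phi(c)$. If instead $k > v$, then $c = \phi^k(b) = \phi^{k-v}(\phi^v(b)) = \phi^{k-v}(z) \in \mathcal{O}^{+}_\phi(z) \subseteq \mathcal{O}^{+}_\phi(a)$, whence $\mathcal{O}^{+}_\phi(c) \subseteq \mathcal{O}^{+}_\phi(a)$ and therefore $\mathcal{O}^{+}_\phi(a) \cap \mathcal{O}^{+}_\phi(c) = \mathcal{O}^{+}_\phi(c)$.

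Finally I would conclude in each case that the intersection is infinite: in the first case it contains the infinite set $\mathcal{O}^{+}_\phi(z)$, and in the second case it equals $\mathcal{O}^{+}_\phi(c)$, which is an infinite tail of the infinite orbit $\mathcal{O}^{+}_\phi(b)$ (again by Lemma \ref{lemmaDistinct}). I do not expect a genuine obstacle here; the only point requiring care is the case split that tracks whether $c$ sits upstream or downstream of the common point $z$ along $b$'s orbit, since that determines whether the shared infinite tail is reached through $c$ itself or through the pre-common segment. Everything else is routine bookkeeping with iterates of $\phi$.
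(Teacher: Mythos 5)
Your proof is correct and follows essentially the same strategy as the paper's: exhibit a point lying in both $\mathcal{O}^{+}_\phi(a)$ and $\mathcal{O}^{+}_\phi(c)$, then invoke Lemma \ref{lemmaDistinct} to conclude that its forward orbit is infinite and contained in the intersection. The only difference is that the paper avoids your case split on $k$ versus $v$ by applying $\phi^k$ to the equation $\phi^u(a)=\phi^v(b)$, which yields $\phi^{u+k}(a)=\phi^{v+k}(b)=\phi^v(c)$ as a common point of the two orbits in a single stroke.
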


\begin{proof}
Since $\mathcal{O}^{+}_\phi(a)$ and $\mathcal{O}^{+}_\phi(b)$ intersect, there exist $(m,n) \in \mathbb{N}^2$ such that $\phi^n(a) = \phi^m(b)$. Also, since $c \in \mathcal{O}^{+}_\phi(b)$, there exists $p \in \mathbb{N}$ such that $c = \phi^p(b)$. Therefore $\phi^{n+p}(a) = \phi^{m+p}(b) = \phi^m(c)$, and so this common element belongs to the orbits of $a$ and $c$.  Moreover, since $\mathcal{O}^{+}_\phi(a)$ is infinite, and this common element and its iterates by $\phi$ belong to it, we have by lemma \ref{lemmaDistinct} that the orbit of this element is infinite. This orbit is included in the orbits of $a$ and $c$, and so in their intersection, which proves that $\mathcal{O}^{+}_\phi(a) \cap \mathcal{O}^{+}_\phi(c)$ is infinite.
\end{proof}

\begin{corollary}
\label{corollaryIntersectionOfFinitelyManyOrbitsIsInfinite}
Let $I$ be an infinite set and $\phi : I \to I$. \\
Let $I^* = \{a_1,\cdots,a_n\} \in \mathcal{P}_{\omega,*}(\{a \in I : |\mathcal{O}^{+}_\phi(a)|=+\infty\})$ such that the orbits $\mathcal{O}^{+}_\phi(a_i)$ intersect jointly or in pairs. 
Then $\cap_{a \in I^*} \mathcal{O}^{+}_\phi(a)$ is infinite.
\end{corollary}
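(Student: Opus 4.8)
The plan is to treat the two alternatives in the hypothesis separately and to reduce everything to the two-orbit merging Lemma \ref{lemmaNonEmptyIntersectionOfTwoInfOrbIsInfinite}.

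If the orbits intersect \emph{jointly}, the argument is immediate: pick $z \in \bigcap_{i=1}^n \mathcal{O}^{+}_\phi(a_i)$. Its whole forward orbit $\mathcal{O}^{+}_\phi(z)$ then lies in every $\mathcal{O}^{+}_\phi(a_i)$, hence in the total intersection. Since $z \in \mathcal{O}^{+}_\phi(a_1)$ and $\mathcal{O}^{+}_\phi(a_1)$ is infinite, Lemma \ref{lemmaDistinct} guarantees that the iterates of $z$ are pairwise distinct, so $\mathcal{O}^{+}_\phi(z)$ is infinite and therefore so is $\bigcap_{i=1}^n \mathcal{O}^{+}_\phi(a_i)$.

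The substantial case is pairwise intersection, which I would handle by induction on $n$. The base case $n=1$ is the standing assumption that $\mathcal{O}^{+}_\phi(a_1)$ is infinite (and $n=2$ is exactly Lemma \ref{lemmaNonEmptyIntersectionOfTwoInfOrbIsInfinite} applied with $c=b$). For the inductive step, assume the claim for $n-1$ and set $J := \bigcap_{i=1}^{n-1}\mathcal{O}^{+}_\phi(a_i)$; the subfamily $\{a_1,\dots,a_{n-1}\}$ still intersects pairwise, so by the induction hypothesis $J$ is infinite, in particular non-empty. The crucial move is to collapse $J$ into a single orbit: since $\{a_1,\dots,a_{n-1}\} \in D_\phi$, Lemma \ref{lemmaIntersOrb} gives $J = \mathcal{O}^{+}_\phi(w)$ with $w := \xi_\phi(\{a_1,\dots,a_{n-1}\})$, an infinite orbit because $J$ is infinite.

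It then remains to intersect this single orbit with $\mathcal{O}^{+}_\phi(a_n)$. I would note that $w \in J \subseteq \mathcal{O}^{+}_\phi(a_1)$, and that $\mathcal{O}^{+}_\phi(a_1)$ and $\mathcal{O}^{+}_\phi(a_n)$ are infinite and meet by the pairwise hypothesis. Applying Lemma \ref{lemmaNonEmptyIntersectionOfTwoInfOrbIsInfinite} with $a := a_n$, $b := a_1$ and $c := w$ then yields that $\mathcal{O}^{+}_\phi(a_n) \cap \mathcal{O}^{+}_\phi(w)$ is infinite; since $\mathcal{O}^{+}_\phi(w) = J$, this is precisely $\bigcap_{i=1}^n \mathcal{O}^{+}_\phi(a_i)$, completing the induction. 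The only point requiring care — and the main obstacle — is this reduction of the partial intersection to an orbit via Lemma \ref{lemmaIntersOrb}, since the two-orbit Lemma \ref{lemmaNonEmptyIntersectionOfTwoInfOrbIsInfinite} only speaks about single orbits; one must verify the non-emptiness hypothesis of Lemma \ref{lemmaIntersOrb} (supplied by the induction hypothesis) and correctly match the roles $a,b,c$ in the merging lemma.
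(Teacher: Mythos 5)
Your proof is correct and takes essentially the same route as the paper: the paper's terse ``using lemma \ref{lemmaNonEmptyIntersectionOfTwoInfOrbIsInfinite} (in conjunction with lemma \ref{lemmaIntersOrb}) repeatedly'' is precisely the induction you spell out, namely collapsing the partial intersection to a single orbit $\mathcal{O}^{+}_\phi(\xi_\phi(\{a_1,\dots,a_{n-1}\}))$ via Lemma \ref{lemmaIntersOrb} and then merging it with the next orbit via Lemma \ref{lemmaNonEmptyIntersectionOfTwoInfOrbIsInfinite}. The only cosmetic difference is that you treat the ``jointly'' case by a direct argument, whereas the paper simply notes that joint intersection implies pairwise intersection and reduces to that case.
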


\begin{proof}
If the orbits intersect jointly, then they intersect in pairs. Therefore it suffices to consider the latter case. Now, we have $\forall i \in [\![1,n]\!] : |\mathcal{O}^{+}_\phi(a_i)| = +\infty$, so using lemma \ref{lemmaNonEmptyIntersectionOfTwoInfOrbIsInfinite} (in conjunction with lemma \ref{lemmaIntersOrb}) repeatedly, we see that $| \bigcap_{i \in [\![1,n]\!]} \mathcal{O}^{+}_\phi(a_i) | = \infty$.
\end{proof}

The following lemma asserts that we cannot find a subset $I^* \in D_\phi$ containing both an element of infinite orbit and another element of finite orbit.

\begin{lemma}
\label{lemmaInclusionOfDPhi}
Let $I$ be an infinite set and $\phi : I \to I$. Then we have \\
$D_\phi \subseteq \mathcal{P}_{\omega,*}(\{a \in I : |\mathcal{O}^{+}_\phi(a)|=+\infty\}) \cup \mathcal{P}_{\omega,*}(\{a \in I : |\mathcal{O}^{+}_\phi(a)| < \infty\}) $
\end{lemma}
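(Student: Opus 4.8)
The plan is to argue by contradiction, exploiting the fact that $I$ splits as a disjoint union $I = X \sqcup Y$, where $X := \{a \in I : |\mathcal{O}^{+}_\phi(a)| = +\infty\}$ and $Y := \{a \in I : |\mathcal{O}^{+}_\phi(a)| < \infty\}$. The key observation is purely set-theoretic: a set $I^* \in \mathcal{P}_{\omega,*}(I)$ fails to lie in $\mathcal{P}_{\omega,*}(X) \cup \mathcal{P}_{\omega,*}(Y)$ precisely when $I^*$ is not contained in $X$ and not contained in $Y$, that is, when $I^*$ contains at least one element $a$ with infinite orbit and at least one element $b$ with finite orbit.

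So first I would fix an arbitrary $I^* \in D_\phi$ and suppose, for contradiction, that $I^* \notin \mathcal{P}_{\omega,*}(X) \cup \mathcal{P}_{\omega,*}(Y)$. By the observation above, I would extract two elements $a, b \in I^*$ with $\mathcal{O}^{+}_\phi(a)$ infinite and $\mathcal{O}^{+}_\phi(b)$ finite. Next I would use the defining property of $D_\phi$, namely $\bigcap_{c \in I^*} \mathcal{O}^{+}_\phi(c) \neq \emptyset$, together with the trivial inclusion
\[
\bigcap_{c \in I^*} \mathcal{O}^{+}_\phi(c) \subseteq \mathcal{O}^{+}_\phi(a) \cap \mathcal{O}^{+}_\phi(b),
\]
to conclude that $\mathcal{O}^{+}_\phi(a)$ and $\mathcal{O}^{+}_\phi(b)$ share at least one point.

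Finally, I would invoke Lemma \ref{lemmaTwoOrbInfAndFinDon'tIntersect}, which states exactly that an infinite orbit and a finite orbit under $\phi$ cannot intersect. This directly contradicts the conclusion that $\mathcal{O}^{+}_\phi(a) \cap \mathcal{O}^{+}_\phi(b) \neq \emptyset$, completing the contradiction and hence the proof.

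I do not anticipate a genuine obstacle here: the statement is essentially a repackaging of Lemma \ref{lemmaTwoOrbInfAndFinDon'tIntersect} in the language of the family $D_\phi$. The only point requiring a little care is the correct logical negation of membership in the union of the two powerset-families — making sure that \emph{not being} in $\mathcal{P}_{\omega,*}(X) \cup \mathcal{P}_{\omega,*}(Y)$ forces the coexistence of both an infinite-orbit and a finite-orbit element in $I^*$, rather than merely one or the other — after which everything reduces to passing from the joint intersection down to a single pairwise intersection and quoting the earlier lemma.
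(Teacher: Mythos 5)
Your proof is correct and takes essentially the same route as the paper's: fix $I^* \in D_\phi$, extract an element $a$ with infinite orbit and an element $b$ with finite orbit, pass from the nonempty joint intersection $\bigcap_{c \in I^*} \mathcal{O}^{+}_\phi(c)$ to the pairwise intersection $\mathcal{O}^{+}_\phi(a) \cap \mathcal{O}^{+}_\phi(b)$, and derive a contradiction. The only difference is that you quote Lemma \ref{lemmaTwoOrbInfAndFinDon'tIntersect} for the final incompatibility, whereas the paper re-derives it inline from Lemma \ref{lemmaDistinct} by examining the orbit of a common point; your version is, if anything, the tidier of the two.
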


\begin{proof}
Let $I^* \in D_\phi$. Suppose by way of contradiction that $\{a,b\} \subseteq I^*$ with $\mathcal{O}^{+}_\phi(a) = +\infty$ and $\mathcal{O}^{+}_\phi(b) < +\infty$. Let $c \in \mathcal{O}^{+}_\phi(a) \cap \mathcal{O}^{+}_\phi(b)$. If $\mathcal{O}^{+}_\phi(c)$ were finite, we would have $\phi^n(c) = \phi^m(c)$ for some $m \neq n$ by lemma \ref{lemmaDistinct}, and so $\phi^{n+u}(a) = \phi^{m+u}(a)$ for some $u \in \mathbb{N}$ since $c \in \mathcal{O}^{+}_\phi(a)$, which contradicts lemma \ref{lemmaDistinct} since $\mathcal{O}^{+}_\phi(a)$ is infinite. So $\mathcal{O}^{+}_\phi(c)$ must be infinite. But $c \in \mathcal{O}^{+}_\phi(b)$ implies $\mathcal{O}^{+}_\phi(c) \subseteq \mathcal{O}^{+}_\phi(b)$ which is finite, a contradiction. So either $I^* \in \mathcal{P}_{\omega,*}(\{a \in I : |\mathcal{O}^{+}_\phi(a)|=+\infty\})$ or $I^* \in \mathcal{P}_{\omega,*}(\{a \in I : |\mathcal{O}^{+}_\phi(a)| < \infty\})$.
\end{proof}

\subsection{Maps with pairwise intersecting infinite orbits}

The next definition will be helpful in the next section.

\begin{definition}
Let $I$ be a set and $\phi : I \to I$. We let $\widetilde{P}(\phi)$ be the proposition
\[ \forall a,b \in I : \left[ (|\mathcal{O}^{+}_\phi(a)|=|\mathcal{O}^{+}_\phi(b)|=\infty) \Rightarrow | \mathcal{O}^{+}_\phi(a) \cap \mathcal{O}^{+}_\phi(b) | = \infty \right]. \]
\end{definition}

\begin{remark}
\label{remarkPtilde}
Using lemmas \ref{lemmaNonEmptyIntersectionOfTwoInfOrbIsInfinite} and \ref{lemmaTwoOrbInfAndFinDon'tIntersect}, $\widetilde{P}(\phi)$ is easily seen to be equivalent to
\[ \forall a,b \in I : \left[ (|\mathcal{O}^{+}_\phi(a)|=|\mathcal{O}^{+}_\phi(b)|=\infty) \Rightarrow \mathcal{O}^{+}_\phi(a) \cap \mathcal{O}^{+}_\phi(b) \neq \emptyset \right]. \]
\end{remark}

\begin{example}
\begin{enumerate}
\item $\widetilde{P}(\phi)$ is vacuously true when $I$ is finite.
\item $\widetilde{P}(\phi)$ is true when $(\exists a \in I) : \mathcal{O}^{+}_\phi(a)$ is cofinite, see lemma \ref{lemmaOrbCofPtildeHolds}.
\end{enumerate}
\end{example}

\begin{lemma}
\label{lemmaCharacOfPTildeInTermsOfDPhi}
Let $I$ be an infinite set and $\phi : I \to I$. Then we have \\
$\widetilde{P}(\phi) \Leftrightarrow D_\phi \supseteq \mathcal{P}_{\omega,*}(\{a \in I : |\mathcal{O}^{+}_\phi(a)|=+\infty\})$.
\end{lemma}

\begin{proof}
($\Rightarrow$) Suppose that $\widetilde{P}(\phi)$ is true. Let $I^* = \{a_1,\cdots,a_n\} \in \mathcal{P}_{\omega,*}(\{a \in I : |\mathcal{O}^{+}_\phi(a)|=+\infty\})$. Then it follows from corollary \ref{corollaryIntersectionOfFinitelyManyOrbitsIsInfinite} that $I^* \in D_\phi$. \\
($\Leftarrow$) Suppose that $D_\phi \supseteq \mathcal{P}_{\omega,*}(\{x \in I : |\mathcal{O}^{+}_\phi(x)|=+\infty\})$. Let $a,b \in I$ such that $|\mathcal{O}^{+}_\phi(a)| = |\mathcal{O}^{+}_\phi(b)| = +\infty$. So $\{a,b\} \in \mathcal{P}_{\omega,*}(\{x \in I : |\mathcal{O}^{+}_\phi(x)|=+\infty\}) \subseteq D_\phi$. Therefore $\mathcal{O}^{+}_\phi(a) \cap \mathcal{O}^{+}_\phi(b) \neq \emptyset$ by definition of $D_\phi$. Hence $\widetilde{P}(\phi)$ is true by remark \ref{remarkPtilde}.
\end{proof}

\begin{lemma}
\label{lemmaOrbCofPtildeHolds}
Let $I$ be an infinite set, $\phi : I \to I$, and $a \in I$ such that $\mathcal{O}^{+}_\phi(a)$ is cofinite. Then $\widetilde{P}(\phi)$ holds.
\end{lemma}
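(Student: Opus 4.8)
The plan is to verify $\widetilde{P}(\phi)$ by exploiting the existence of a single element $a$ whose orbit is cofinite as a ``hub'' that forces every infinite orbit to be not merely infinite but in fact cofinite; once we know that, any two of the relevant orbits intersect for trivial cardinality reasons. Concretely, I would fix the element $a \in I$ with $\mathcal{O}^{+}_\phi(a)$ cofinite given by the hypothesis, rename the two universally quantified elements in the statement of $\widetilde{P}(\phi)$ to $b$ and $c$ to avoid a clash with this fixed $a$, and then show directly that if $|\mathcal{O}^{+}_\phi(b)| = |\mathcal{O}^{+}_\phi(c)| = \infty$ then $|\mathcal{O}^{+}_\phi(b) \cap \mathcal{O}^{+}_\phi(c)| = \infty$.

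The first step is to upgrade ``infinite'' to ``cofinite'' for both $b$ and $c$. Applying Lemma \ref{lemmaOrbCofImplications} to the pair $(a,b)$ — whose hypotheses are exactly that $\mathcal{O}^{+}_\phi(a)$ is cofinite and $\mathcal{O}^{+}_\phi(b)$ is infinite — yields that $\mathcal{O}^{+}_\phi(b)$ is cofinite. Applying the same lemma to the pair $(a,c)$ gives that $\mathcal{O}^{+}_\phi(c)$ is cofinite. The second step is a one-line set-theoretic observation: since
\[ I \setminus \left( \mathcal{O}^{+}_\phi(b) \cap \mathcal{O}^{+}_\phi(c) \right) = \left( I \setminus \mathcal{O}^{+}_\phi(b) \right) \cup \left( I \setminus \mathcal{O}^{+}_\phi(c) \right) \]
is a union of two finite sets, it is finite, so $\mathcal{O}^{+}_\phi(b) \cap \mathcal{O}^{+}_\phi(c)$ is cofinite, and because $I$ is infinite a cofinite subset is infinite. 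This establishes $|\mathcal{O}^{+}_\phi(b) \cap \mathcal{O}^{+}_\phi(c)| = \infty$, which is precisely what $\widetilde{P}(\phi)$ requires.

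I do not expect a genuine obstacle here; the content of the lemma is almost entirely packaged inside Lemma \ref{lemmaOrbCofImplications}, and the remaining argument is the elementary fact that finite intersections of cofinite sets are cofinite. The only thing requiring a moment of care is the notational collision between the fixed hypothesis element and the bound variables of $\widetilde{P}(\phi)$, which the renaming to $b,c$ resolves cleanly. As a minor variant, one could instead prove nonemptiness of $\mathcal{O}^{+}_\phi(b) \cap \mathcal{O}^{+}_\phi(c)$ and then invoke Remark \ref{remarkPtilde} to conclude, but deriving cofiniteness directly is just as short and avoids the detour.
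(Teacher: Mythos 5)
Your proof is correct, and it takes a cleaner route than the paper's. The paper also relies on Lemma \ref{lemmaOrbCofImplications}, but in a different pattern: it first shows $\mathcal{O}^{+}_\phi(a) \cap \mathcal{O}^{+}_\phi(b_1)$ is non-empty and cofinite, then invokes Lemma \ref{lemmaIntersOrb} to rewrite this intersection as the single orbit $\mathcal{O}^{+}_\phi(\xi_\phi(\{a,b_1\}))$ — a step that is needed precisely so that Lemma \ref{lemmaOrbCofImplications} (whose hypotheses concern two orbits) can be applied a second time against $\mathcal{O}^{+}_\phi(b_2)$, producing the triple intersection and concluding by inclusion. You instead exploit only the secondary conclusion of Lemma \ref{lemmaOrbCofImplications} (``and consequently $\mathcal{O}^{+}_\phi(b)$ is cofinite''), applied separately to the pairs $(a,b)$ and $(a,c)$, which upgrades both orbits to cofinite; after that, the identity
\[ I \setminus \left( \mathcal{O}^{+}_\phi(b) \cap \mathcal{O}^{+}_\phi(c) \right) = \left( I \setminus \mathcal{O}^{+}_\phi(b) \right) \cup \left( I \setminus \mathcal{O}^{+}_\phi(c) \right) \]
finishes the job with no further orbit theory. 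What your version buys is economy: it bypasses the noncanonical map $\xi_\phi$ and Lemma \ref{lemmaIntersOrb} altogether, so the result rests on strictly less machinery (in particular, none of the choices involved in Definition \ref{defXi}). What the paper's version buys is essentially a demonstration of the $\xi_\phi$/intersection-of-orbits apparatus that it develops and reuses elsewhere (e.g.\ in Propositions \ref{propP1} and \ref{propIrreducibilityPhiGu}); as a standalone proof of this lemma, yours is preferable. Both arguments in fact deliver the slightly stronger conclusion that $\mathcal{O}^{+}_\phi(b) \cap \mathcal{O}^{+}_\phi(c)$ is cofinite, not merely infinite.
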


\begin{proof}
Let $b_1,b_2 \in I$ such that $|\mathcal{O}^{+}_\phi(b_1)|=|\mathcal{O}^{+}_\phi(b_2)|=\infty$. Then by lemma \ref{lemmaOrbCofImplications}, $\mathcal{O}^{+}_\phi(a) \cap \mathcal{O}^{+}_\phi(b_1)$ is non-empty and cofinite. By lemma \ref{lemmaIntersOrb}, $\mathcal{O}^{+}_\phi(a) \cap \mathcal{O}^{+}_\phi(b_1) = \mathcal{O}^{+}_\phi(\xi_\phi(\{a,b_1\}))$. Again, by lemma \ref{lemmaOrbCofImplications}, $\mathcal{O}^{+}_\phi(\xi_\phi(\{a,b_1\})) \cap \mathcal{O}^{+}_\phi(b_2) = \mathcal{O}^{+}_\phi(a) \cap \mathcal{O}^{+}_\phi(b_1) \cap \mathcal{O}^{+}_\phi(b_2)$ is non-empty and cofinite. Since this set is included in $\mathcal{O}^{+}_\phi(b_1) \cap \mathcal{O}^{+}_\phi(b_2)$, we have that $\mathcal{O}^{+}_\phi(b_1) \cap \mathcal{O}^{+}_\phi(b_2)$ is also non-empty. and cofinite. Therefore $\widetilde{P}(\phi)$ holds.
\end{proof}

\section{Functions with a maximal number of finite invariant sets}
\label{sectionExactPreservUnderDirectImage}

The results of this section are obvious and only proved for completeness. They express the fact that, as long as $k$ is strictly less than the cardinality of $I$, the identity function is the only function with a maximal number of finite invariant sets in $\mathcal{P}_{\omega,k^+}(I)$ or $Int_{\omega, k^+}(\mathbb{N})$.

\subsection{Preservation of all finite sets}

\begin{proposition}
\label{propCharMapPreservingSets}
Let $I$ be a set, $\alpha : I \to I$, and $k$ a natural number such that $1 \leq k \leq |I|$. Suppose that
\[ \forall I^* \in \mathcal{P}_{\omega,k^+}(I) : \alpha(I^*) \subseteq I^*. \]
Then we have \\
(a) if $1 \leq k < |I|$, then $\alpha = Id$. \\
(b) if $k = |I|$, then $\alpha$ can be arbitrary.
\end{proposition}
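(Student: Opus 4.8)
The plan is to dispose of part (b) immediately and then establish part (a) pointwise by a small separation argument.

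For part (b), observe that $k = |I|$ forces $I$ to be finite with exactly $k$ elements. The only member of $\mathcal{P}_{\omega,k^+}(I)$ is then $I$ itself, since any $I^*$ in this family satisfies $k \leq |I^*| \leq |I| = k$, whence $|I^*| = |I|$ and therefore $I^* = I$. The hypothesis thus collapses to the single condition $\alpha(I) \subseteq I$, which holds automatically for every self-map $\alpha : I \to I$. Hence $\alpha$ is entirely unconstrained, as claimed.

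For part (a), I would fix an arbitrary $x \in I$ and show $\alpha(x) = x$, arguing by contradiction. Suppose $\alpha(x) = y$ with $y \neq x$. The idea is to produce a single test set $I^* \in \mathcal{P}_{\omega,k^+}(I)$ that contains $x$ but excludes $y$: applying the hypothesis to such an $I^*$ gives $y = \alpha(x) \in \alpha(I^*) \subseteq I^*$, which contradicts $y \notin I^*$. Ranging $x$ over all of $I$ then yields $\alpha = Id$.

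The only point requiring care is the construction of this $I^*$, and this is precisely where the strict inequality $k < |I|$ enters. Deleting $y$ leaves the set $I \setminus \{y\}$, which contains $x$ (because $x \neq y$) and whose cardinality is at least $k$: indeed $k < |I|$ gives $k \leq |I \setminus \{y\}|$ uniformly, whether $I$ is finite, where $|I \setminus \{y\}| = |I| - 1 \geq k$, or infinite, where $I \setminus \{y\}$ is still infinite. Since $x \in I \setminus \{y\}$ and $|I \setminus \{y\}| \geq k$, I can pick a finite subset $I^* \subseteq I \setminus \{y\}$ with $x \in I^*$ and $|I^*| = k$, so that $I^* \in \mathcal{P}_{\omega,k^+}(I)$ and $y \notin I^*$, delivering the desired contradiction.

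There is no genuine obstacle here; the statement is elementary, as the authors themselves signal. The subtlest step is merely phrasing the cardinality bookkeeping $|I \setminus \{y\}| \geq k$ so that it covers the infinite case alongside the finite one, and it is exactly the strictness $k < |I|$ that guarantees enough room to separate $x$ from $y$ inside a finite set of the required size.
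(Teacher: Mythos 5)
Your proof is correct and follows essentially the same route as the paper: for (a) you separate $x$ from $\alpha(x)$ by a $k$-element test set avoiding $\alpha(x)$, which is exactly the paper's argument (you merely spell out the cardinality bookkeeping that the paper leaves implicit), and for (b) you note that $\mathcal{P}_{\omega,k^+}(I)=\{I\}$ so the hypothesis is vacuous, just as the paper does.
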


\begin{proof}
Suppose $1 \leq k < |I|$. If $\alpha(a) \neq a$ for some $a \in I$, one can choose $I^* \in \mathcal{P}_{\omega,k^+}(I)$ such that $a \in I^*$ but $\alpha(a) \notin I^*$ to obtain a contradiction. Conversely the identity function satisfies the requirement. \\
Suppose now that $k = |I|$. Then $I$ is a finite set and $\mathcal{P}_{\omega, k^+}(I) = \{I\}$, and every function $\alpha : I \to I$ obviously preserves $I$.
\end{proof}

\subsection{Preservation of all finite intervals in $\mathbb{N}$}

\begin{proposition}
\label{propCharMapPreservingInt}
Let $\alpha : \mathbb{N} \to \mathbb{N}$, and $k$ a natural number such that $k \geq 1$. Suppose that
\[ \forall [\![a,b]\!] \in Int_{\omega, k^+}(\mathbb{N}) : \alpha([\![a,b]\!]) \subseteq [\![a,b]\!]. \]
Then $\alpha = Id$.
\end{proposition}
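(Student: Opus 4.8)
The plan is to prove $\alpha(n)=n$ separately for each $n\in\mathbb{N}$ by trapping the value $\alpha(n)$ inside the tightest intervals that are forced to contain it. This is the interval analogue of the argument in Proposition \ref{propCharMapPreservingSets}, but with one crucial difference: we are no longer free to isolate a point by an arbitrary finite set, since we may only use intervals of length at least $k$. As a result the position of $n$ relative to the left boundary $0$ of $\mathbb{N}$ will matter, and the argument will split into a ``generic'' part and a delicate boundary part.

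First I would record a uniform lower bound. For every $n$ the interval $[\![n,n+k-1]\!]$ belongs to $Int_{\omega,k^+}(\mathbb{N})$ and contains $n$, so the hypothesis gives $\alpha(n)\in[\![n,n+k-1]\!]$, whence $\alpha(n)\geq n$. Next I would obtain a matching upper bound whenever there is room on the left: if $n\geq k-1$, then $[\![n-k+1,n]\!]$ also belongs to $Int_{\omega,k^+}(\mathbb{N})$ and contains $n$, so $\alpha(n)\in[\![n-k+1,n]\!]$, whence $\alpha(n)\leq n$. Combining the two bounds yields $\alpha(n)=n$ for every $n\geq k-1$, which already settles all but finitely many points. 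This ``two-sided trapping'' is the clean core of the proof and requires no more than the two displayed intervals.

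The main obstacle is the finitely many small points $0\leq n\leq k-2$. For such $n$, no interval of length $\geq k$ can have $n$ as its right endpoint, because that would force a left endpoint $n-k+1<0$ lying outside $\mathbb{N}$; every admissible interval containing $n$ therefore reaches at least up to $k-1$, and the trapping above delivers only the weak sandwich $n\leq\alpha(n)\leq k-1$. Closing this gap cannot be done with a single interval: one is pushed toward a global argument on the block $[\![0,k-1]\!]$, using that its top is already pinned ($\alpha(k-1)=k-1$) together with the pointwise inequalities $\alpha(0)\geq 0,\dots,\alpha(k-2)\geq k-2$ and the fact that $\alpha$ maps $[\![0,k-1]\!]$ into itself. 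I expect this boundary step to be the decisive difficulty, and I would scrutinize it carefully: since $\alpha$ need not be injective, a naive counting argument on $[\![0,k-1]\!]$ does not obviously force each small value down to the diagonal, so it is not clear the trapping can be pushed through at the left edge without an additional idea or hypothesis. I would therefore concentrate essentially all of my effort on this finite block, treating the large-$n$ case as routine.
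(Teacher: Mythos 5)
Your handling of the range $n \geq k-1$ is correct and coincides with the paper's argument (trap $\alpha(n)$ between $[\![n,n+k-1]\!]$ and $[\![n-k+1,n]\!]$). But your suspicion about the left edge is not just a difficulty to be overcome --- it is fatal: the gap you isolated cannot be closed, because the proposition as stated is \emph{false} for every $k \geq 2$. Take $\alpha(0)=1$ and $\alpha(n)=n$ for all $n\geq 1$. Every interval of $Int_{\omega,k^+}(\mathbb{N})$ containing $0$ has the form $[\![0,b]\!]$ with $b \geq k-1 \geq 1$, so it absorbs $\alpha(0)=1$, and every other point is fixed; hence the hypothesis holds while $\alpha \neq Id$. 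More generally, the hypothesis is \emph{equivalent} to the conjunction of $\alpha(n)=n$ for $n \geq k-1$ and $n \leq \alpha(n) \leq k-1$ for $0 \leq n \leq k-2$ (the intersection of all admissible intervals containing such an $n$ is exactly $[\![n,k-1]\!]$, and conversely any $\alpha$ obeying these constraints maps each admissible $[\![a,b]\!]$ into itself, since $x < k-1$ and $x \in [\![a,b]\!]$ imply $a \leq x \leq \alpha(x) \leq k-1 \leq a+k-1 \leq b$). So the ``weak sandwich'' you derived on the boundary block is the strongest conclusion available; no counting or global argument, with or without injectivity, can push those values down to the diagonal.

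The paper's own proof commits precisely the error you flagged: when $\alpha(a)>a$ it invokes the interval $[\![a-k+1,a]\!]$, which for $a < k-1$ is not an element of $Int_{\omega,k^+}(\mathbb{N})$ --- its left endpoint would be negative, and truncating it at $0$ leaves fewer than $k$ elements --- so the hypothesis cannot be applied to it. The proposition is therefore correct only when $k=1$, where singletons are admissible and pin every point; equivalently, in your decomposition the block $[\![0,k-2]\!]$ is then empty and the two-sided trapping already covers all of $\mathbb{N}$. For $k \geq 2$ the statement must be weakened to the characterization above. Note the contrast with Proposition \ref{propCharMapPreservingSets}: for arbitrary finite subsets one can always separate $a$ from $\alpha(a)$ by a $k$-element set as long as $k < |I|$, but for intervals of $\mathbb{N}$ the boundary at $0$ destroys exactly this freedom, which is what your analysis detected.
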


\begin{proof}
Let $a \in \mathbb{N}$. If $\alpha(a) > a$, one can choose $[\![a-k+1,a]\!]$ to obtain a contradiction. If $\alpha(a) < a$, one can choose $[\![a,a+k-1]\!]$ to obtain a contradiction. Conversely the identity function satisfies the requirement.
\end{proof}

\section{Functions with a maximal number of finite internally-1-quasi-invariant sets}
\label{sectionPreservUnderDirectImageUpToOneElement}

\subsection{Preservation of all finite sets up to one element}

If $I$ is a set such that $|I| \geq 3$, the next proposition shows that there are two 3-parameter families of functions $\beta : I \to I$ such that there exists a function $w : \mathcal{P}_{\omega,*}(I) \to I$ such that 
\[ \forall I^* \in \mathcal{P}_{\omega,*}(I) : w(I^*) \in I^* \text{ and } \beta(I^* \setminus \{w(I^*) \}) \subseteq I^*. \] Note that we cannot require that $\forall I^* \in \mathcal{P}_{\omega,*}(I) : \beta(w(I^*)) \notin I^*$ since these functions have many fixed points (except if $|I| = 3$).

\begin{proposition}
\label{propBetaAllP*I}
Let $I$ be a set such that $|I| \geq 3$, $\beta : I \to I$ and $w : \mathcal{P}_{\omega,*}(I) \to I$ such that 
\[ \forall I^* \in \mathcal{P}_{\omega,*}(I) : w(I^*) \in I^* \text{ and } \beta(I^* \setminus \{w(I^*)\}) \subseteq I^*. \] 
Then either \\
\textbf{Case \circled{1}.}
\[ \exists (a,b,c) \in I^3 : (\beta(a)=b \text{ and } \beta(b)=c) \text{ and } (\forall x \in I \setminus \{a,b\} : \beta(x) = x) \]
and
\[ w = \begin{cases} \mathcal{P}_{\omega,*}(I) &\to I \\ I^* &\mapsto b \text{ if } \{a,b\} \subseteq I^* \\ I^* &\mapsto a \text{ if } a \in I^* \text{ and } b \notin I^* \\ I^* &\mapsto b \text{ if } a \notin I^* \text{ and } b \in I^* \\ I^* &\mapsto \text{any element of } I^* \text{ otherwise} \end{cases} \]
or \\
\textbf{Case \circled{2}.} 
\[ \exists (a,b,c) \in I^3 \text{ with } a \neq b, b \neq c, c \neq a, \beta(a)=b, \beta(b)=c, \beta(c)=a \]
and
\[ \forall x \in I \setminus \{a,b,c\} : \beta(x) = x \]
and
\[ w = \begin{cases} \mathcal{P}_{\omega,*}(I) &\to I \\ I^* &\mapsto a \text{ if } a \in I^* \text{ and } b,c \notin I^* \\ I^* &\mapsto b \text{ if } b \in I^* \text{ and } a,c \notin I^* \\ I^* &\mapsto c \text{ if } c \in I^* \text{ and } a,b \notin I^* \\ I^* &\mapsto b \text{ if } \{a,b\} \subseteq I^* \text{ and } c \notin I^* \\ I^* &\mapsto a \text{ if } \{a,c\} \subseteq I^* \text{ and } b \notin I^* \\ I^* &\mapsto c \text{ if } \{b,c\} \subseteq I^* \text{ and } a \notin I^* \\ I^* &\mapsto  \text{any element of } I^* \text{ otherwise }
\end{cases}. \]
\end{proposition}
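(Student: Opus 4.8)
The plan is to recast the hypothesis pointwise, extract a rigidity condition on $\beta$, classify $\beta$ by a counting argument, and only then pin down $w$. For $I^* \in \mathcal{P}_{\omega,*}(I)$ call $x \in I^*$ \emph{bad for} $I^*$ when $\beta(x) \notin I^*$. If $x \in I^* \setminus \{w(I^*)\}$ were bad, then $\beta(x) \notin I^*$ would contradict $\beta(I^* \setminus \{w(I^*)\}) \subseteq I^*$; hence $I^*$ has at most one bad element, and when one exists it must be $w(I^*)$. Writing $M := \{x \in I : \beta(x) \neq x\}$ for the set of moved points, every bad element lies in $M$, since a fixed point is never bad for a set containing it. Applying this to $I^* = \{x,y\}$ with $x,y \in M$ distinct, at most one of $x,y$ is bad, so $\beta(x) \in \{x,y\}$ or $\beta(y) \in \{x,y\}$; as $x,y \in M$ this forces $\beta(x) = y$ or $\beta(y) = x$. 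Thus every pair of distinct moved points is \emph{witnessed} by one of the arrows $u \mapsto \beta(u)$.

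Each $u \in M$ has the single image $\beta(u)$, so it witnesses the pair $\{u,\beta(u)\}$ when $\beta(u) \in M$ and witnesses no pair otherwise; in all cases $u$ witnesses at most one pair. Since each of the $\binom{|M|}{2}$ pairs must be witnessed, $\binom{|M|}{2} \leq |M|$, which gives $|M| \leq 3$. If $|M| \leq 2$, choose $a,b$ with $M \subseteq \{a,b\}$ and $\beta(a) = b$ after relabelling (in the empty and singleton cases this is arranged through the coincidences $a=b$, respectively $b = \beta(a)$ fixed), and set $c := \beta(b)$. Any image lying outside $\{a,b\}$ is a fixed point, for otherwise $M$ would have a third element; hence $\beta$ is the identity off $\{a,b\}$, which is exactly the first listed alternative with the triple $(a,b,c)$.

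If $|M| = 3$, the bound $\binom{3}{2} \leq 3$ is an equality, so each moved point witnesses exactly one pair; in particular every $\beta(u)$ with $u \in M$ lies in $M$, and no pair is witnessed twice (a doubly witnessed pair, i.e. a $2$-cycle, would leave the two pairs meeting the third vertex to be witnessed by a single remaining arrow, which is impossible). Therefore the three arrows form a tournament on $M$ in which every vertex has out-degree $1$, and the only such tournament on three vertices is the $3$-cycle. Relabelling $M = \{a,b,c\}$ yields $\beta(a)=b$, $\beta(b)=c$, $\beta(c)=a$ with $a,b,c$ distinct and $\beta$ the identity off $\{a,b,c\}$, which is the second listed alternative. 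To settle $w$, I would in each alternative compute the bad set of an arbitrary $I^*$ directly from $\beta$; it is always a subset of $\{a,b\}$ (resp. $\{a,b,c\}$), and a routine check over the membership patterns of $a,b$ (resp. $a,b,c$) in $I^*$ shows it is a prescribed singleton or empty. Where it is a singleton, $w(I^*)$ is forced to that element; where it is empty, $w(I^*)$ is free, and the displayed formula returns the forced value in the former situation while making an admissible selection in the latter.

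The delicate point is the $|M| = 3$ analysis: converting the equality in the counting bound into the assertion that the three arrows constitute a genuine tournament — ruling out both a $2$-cycle accompanied by a third arrow and any arrow leaving $M$ — and then identifying the unique all-out-degree-one tournament on three vertices as the $3$-cycle. A secondary bookkeeping obstacle is folding the degenerate small-$|M|$ configurations ($|M| = 0,1,2$, including the $2$-cycle $c = a$) uniformly into the single triple $(a,b,c)$ of the first alternative, which requires handling the coincidences among $a$, $b$, and $c$ with care.
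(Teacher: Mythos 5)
Your proof is correct in substance, and it takes a genuinely different route from the paper's. The paper argues by a direct case chase: it fixes non-fixed points $p,q$ (and possibly $r$), evaluates the hypothesis on two-element sets such as $\{p,m\}$ and $\{q,n\}$, and threads through alternatives until $\beta$ is forced into type \circled{1} or type \circled{2}. You instead isolate the moved set $M$, note that the hypothesis on a pair of moved points forces an arrow between them, and count: each moved point witnesses at most one pair, every pair needs a witness from among its own two members, so $|M| \le 3$; then $|M|\le 2$ lands in Case \circled{1}, while for $|M|=3$ the equality case forces a tournament on $M$ with all out-degrees $1$, hence a $3$-cycle, which is Case \circled{2}. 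Your version is shorter and explains structurally why exactly these two shapes can occur; the paper's chase is more explicit about constructing $a,b,c$ by hand. Both arguments use only two-element test sets, and both treat the $w$ claims cursorily (the paper simply writes ``The statements about $w$ are easy to prove''); your reading --- $w(I^*)$ is forced exactly when $I^*$ contains a bad element and is free otherwise, the displayed formula being one admissible selection in the free cases --- is in fact the more accurate account, since for instance when $\{a,b,c\}\subseteq I^*$ in Case \circled{1} nothing constrains $w(I^*)$ at all.

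One step needs a patch: the inference ``$\binom{|M|}{2}\le |M|$, which gives $|M|\le 3$'' tacitly assumes $M$ is finite, and nothing earlier in your argument rules out an infinite $M$ (for infinite $M$ the cardinal inequality $\binom{|M|}{2}\le |M|$ holds, so no contradiction results and the step fails as written). The repair is one line and stays entirely inside your framework: if $M$ contained four distinct points $x_1,\dots,x_4$, each of the six pairs among them would need a witness, and the witness of a pair is one of its own two members, so six pairs would require witnesses among four points, each witnessing at most one pair --- a contradiction. Hence $|M|\le 3$ for arbitrary $I$, and the rest of your proof goes through unchanged.
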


\begin{proof}
If $\beta$ is the identity function, then $\beta$ is clearly of type \circled{1}. \\
Otherwise, there exists $p \in I : \beta(p) \neq p$. \\
If $\beta$ is the identity function on $I \setminus \{p\}$, then $\beta$ is clearly of type \circled{1}. \\
Otherwise, let $q \in I \setminus \{p\}$ such that $\beta(q) \neq q$. \\
If for all $(m,n) \in I \setminus \{p\}$ with $m \neq n$ we had $w(\{p,m\})=m$ and $w(\{p,n\})=n$, then $\beta(p) \in \{p,m\} \cap \{p,n\} = \{p\}$, a contradiction. So for all $(m,n) \in I \setminus \{p\}$ with $m \neq n$, we have $w(\{p,m\})=p$ or $w(\{p,n\}=p$ which implies $\beta(m) \in \{p,m\}$ or $\beta(n) \in \{p,n\}$. Similarly, for all $(m,n) \in I \setminus \{q\}$ with $m \neq n$, $\beta(m) \in \{q,m\}$ or $\beta(n) \in \{q,n\}$. \\
Let $n \in I \setminus \{p,q,\beta(q)\}$. If $w(\{q,n\})=n$, then $\beta(q) \in \{q,n\}$ so $\beta(q)=n$, a contradiction. So $w(\{q,n\})=q$ and therefore $\beta(n) \in \{q,n\}$. \\ 
If $\exists r \in I \setminus \{p,q,\beta(q)\}$ such that $\beta(r) \neq r$, then $\beta(r) = q \notin \{p,r\}$, which implies $\forall m \in I \setminus \{p,r\} : \beta(m) \in \{p,m\}$. In particular $\beta(q) \in \{p,q\}$ and so $\beta(q)=p$. Since $\beta(r) \neq r$, we can prove in the same way as before that $\forall m,n \in I \setminus \{r\}$ with $m \neq n$, $\beta(m) \in \{r,m\}$ or $\beta(n) \in \{r,n\}$. So either $\beta$ is the identity function on $I \setminus \{p,r\}$ or there exists $s \in I \setminus \{p,r\}$ such that $\forall n \in I \setminus \{p,r,s\} : \beta(n) \in \{q,n\} \cap \{r,n\} = \{n\}$ and so $\beta$ is the identity function on $I \setminus \{p,r,s\}$. The first case is included in the second, and we have necessarily $s=q$ since $q \notin \{p,r\}$ and $\beta(q) = p \neq q$. Since $p \neq q$ and $p,q \in I \setminus \{r\}$, we have $\beta(p) \in \{r,p\}$ or $\beta(q) \in \{r,q\}$. The second case is impossible so $\beta(p) \in \{r,p\}$ which implies $\beta(p)=r$. So $\beta$ is of type \circled{2} ($a=p, b=r, c=q$). \\
Otherwise, we have $\forall n \in I \setminus \{p,q,\beta(q)\} : \beta(n) = n$. \\
If $\beta(q)=p$, then $\beta$ is clearly of type \circled{1}. \\
Suppose then that $\beta(q) \neq p$. \\
If $\beta(p) \neq q$, then $\beta(\{p,q\} \setminus \{w(\{p,q\})\}) \subseteq \{p,q\}$ leads to a contradiction independently of $w(\{p,q\} \in \{p,q\}$. So $\beta(p)=q$. \\
Since $q \neq \beta(q)$ and $q,\beta(q) \in I \setminus \{p\}$, we have $\beta(q) \in \{p,q\}$ or $\beta(\beta(q)) \in \{p,\beta(q)\}$. The first case is impossible. So $\beta(\beta(q)) \in \{p,\beta(q)\}$. \\
If $\beta(\beta(q)) = p$ then $\beta$ is of type \circled{2} ($a=p, b=q, c=\beta(q)$). \\
Otherwise $\beta(\beta(q))=\beta(q)$ and so $\beta$ is of type \circled{1} ($a=p, b=q, c=\beta(q)$). \\
The statements about $w$ are easy to prove.
\end{proof}

\subsection{Preservation of all finite intervals up to one element in $\mathbb{N}$}

Apart from the successor function $\beta = succ$ with $w : \begin{cases} Int_{\omega,*}(\mathbb{N}) &\to \mathbb{N} \\ [\![a,b]\!] &\mapsto b \end{cases}$, there exist other functions $\beta : \mathbb{N} \to \mathbb{N}$ such that there exists a function $w : Int_{\omega,*}(\mathbb{N}) \to \mathbb{N}$ such that 
\[ \forall [\![a,b]\!] \in Int_{\omega,*}(\mathbb{N}) : w([\![a,b]\!]) \in [\![a,b]\!] \text{ and } \beta([\![a,b]\!] \setminus \{w([\![a,b]\!])\}) \subseteq [\![a,b]\!],\]
as shown by the next proposition. 

\begin{proposition}
\label{propBetaInt}
Let $\beta : \mathbb{N} \to \mathbb{N}$ such that there exists a function $w : Int_{\omega,*}(\mathbb{N}) \to \mathbb{N}$ such that
\[ \forall [\![a,b]\!] \in Int_{\omega,*}(\mathbb{N}) : w([\![a,b]\!]) \in [\![a,b]\!] \text{ and } \beta([\![a,b]\!] \setminus \{w([\![a,b]\!])\}) \subseteq [\![a,b]\!]. \]
Let $\{b_n\}_{n \in [\![0,l]\!]}$ the sequence of non-fixed points of $\beta$ in strictly increasing order, where $l \in \mathbb{N} \cup \{-1, \infty\}$. We extend $\{b_n\}_{n \in [\![0,l]\!]}$ and $\beta$ by setting $b_{-1} = -1$ and $\beta(-1) = 0$, and if $l < \infty$, we set $b_{l+1} = \infty$. Then we have either \\
\textbf{Case \circled{1}}. $\forall n \in [\![0,l]\!] : b_n < \beta(b_n) \leq b_{n+1}$ and
\[ w = \begin{cases} Int_{\omega,*}(\mathbb{N}) &\to \mathbb{N} \\ [\![a,b]\!] &\mapsto b_n \text { if } \exists! n \in [\![0,l]\!] : b_n \in [\![a,b]\!] \text{ and } \beta(b_n) > b \\ [\![a,b]\!] &\mapsto \text{any element of } [\![a,b]\!] \text{ otherwise} \end{cases}. \]
\textbf{Case \circled{2}}. There exists $n^* \in [\![-1,l-1]\!]$ such that $\forall m \leq n^* : b_m < \beta(b_m) \leq b_{m+1}$, $\beta(b_{n^*+1}) < b_{n^*+1}$, $\forall m \in [\![n^*+2,l]\!] : b_{m-1} \leq \beta(b_m) < b_m$, and
\[ w = \begin{cases} Int_{\omega,*}(\mathbb{N}) &\to \mathbb{N} \\ [\![a,b]\!] &\mapsto b_n \text { if } \exists! n \in [\![0,n^*]\!] : b_n \in [\![a,b]\!] \text{ and } \beta(b_n) > b \\ [\![a,b]\!] &\mapsto b_n \text { if } \exists! n \in [\![n^*+1,l]\!] : b_n \in [\![a,b]\!] \text{ and } \beta(b_n) < a \\ [\![a,b]\!] &\mapsto \text{any element of } [\![a,b]\!] \text{ otherwise} \end{cases}. \]
\textbf{Case \circled{3}}. There exists $n^* \in [\![-1,l-1]\!]$ such that $\forall m \leq n^* : b_m < \beta(b_m) \leq b_{m+1}$, $\beta(b_{n^*+1}) > b_{n^*+2}$, $\forall m \in [\![n^*+2,l]\!] : b_{m-1} \leq \beta(b_m) < b_m$, and 
\[ w = \begin{cases} Int_{\omega,*}(\mathbb{N}) &\to \mathbb{N} \\ [\![a,b]\!] &\mapsto b_n \text { if } \exists! n \in [\![0,n^*]\!] : b_n \in [\![a,b]\!] \text{ and } \beta(b_n) > b \\ [\![a,b]\!] &\mapsto b_n \text { if } \exists! n \in [\![n^*+1,l]\!] : b_n \in [\![a,b]\!] \text{ and } \beta(b_n) < a \\ [\![a,b]\!] &\mapsto \text{any element of } [\![a,b]\!] \text{ otherwise} \end{cases}. \]
\end{proposition}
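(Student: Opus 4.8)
The plan is to first restate the hypothesis in a single clean form. For a finite interval $[\![a,b]\!]$, call $x \in [\![a,b]\!]$ an \emph{escaper} if $\beta(x) \notin [\![a,b]\!]$, and let $E([\![a,b]\!])$ be the set of escapers. Removing one point of $[\![a,b]\!]$ and sending the rest back into $[\![a,b]\!]$ is possible exactly when at most one point escapes, so the hypothesis on $(\beta,w)$ is equivalent to demanding $|E([\![a,b]\!])| \leq 1$ for every finite interval, together with the fact that $w([\![a,b]\!])$ is forced to be the unique escaper when one exists and is free otherwise. Since a fixed point of $\beta$ lying in $[\![a,b]\!]$ is never an escaper, every escaper is a non-fixed point $b_n$; this is what lets us phrase everything in terms of the $b_n$.

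The entire argument is then driven by applying $|E|\leq 1$ to the two-point interval $[\![b_m,b_n]\!]$ spanned by two non-fixed points $b_m<b_n$: if both endpoints escape it, we already have two escapers, which is forbidden. First I would read off the \emph{sign structure}: a point with $\beta(b_m)<b_m$ (a left-mover) can never sit below a point with $\beta(b_n)>b_n$ (a right-mover), for then $b_m$ escapes $[\![b_m,b_n]\!]$ on the left and $b_n$ on the right. Hence the right-movers form an initial block of indices $[\![0,j]\!]$ and the left-movers the complementary block $[\![j+1,l]\!]$ for some threshold $j$. Applying the same device to consecutive pairs gives the \emph{boundedness} statements: if $b_m$ and $b_{m+1}$ are both right-movers then $\beta(b_m)\leq b_{m+1}$, so every right-mover except possibly the last one $b_j$ is bounded above by its successor; symmetrically every left-mover except possibly the first one $b_{j+1}$ satisfies $\beta(b_{m})\geq b_{m-1}$. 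Finally the \emph{no-double-anomaly} statement: $b_j$ cannot be far-right ($\beta(b_j)>b_{j+1}$) while $b_{j+1}$ is far-left ($\beta(b_{j+1})<b_j$), since then both escape $[\![b_j,b_{j+1}]\!]$.

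With the extended conventions $b_{-1}=-1$, $b_{l+1}=\infty$ absorbing the two ends, the case split is now forced by the behaviour of the borderline pair $b_j,b_{j+1}$. If there are no left-movers, i.e. $j=l$, then every non-fixed point is a bounded right-mover (the last one vacuously, as $b_{l+1}=\infty$), which is precisely Case \circled{1}. Otherwise a genuine transition occurs, and by the previous paragraph exactly one of three configurations holds: $b_{j+1}$ is a left-mover that is either bounded or far-left while $b_j$ is bounded-right, giving Case \circled{2} with $n^*=j$ and pivot $b_{n^*+1}=b_{j+1}$ (whose only constraint is $\beta(b_{n^*+1})<b_{n^*+1}$); or $b_j$ is far-right while $b_{j+1}$ is bounded-left, giving Case \circled{3} with $n^*=j-1$ and pivot $b_{n^*+1}=b_j$ satisfying $\beta(b_{n^*+1})>b_{n^*+2}$. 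The degenerate possibility $j=-1$ (no right-movers at all) is Case \circled{2} with $n^*=-1$, and $l=-1$ (the identity) is Case \circled{1}; this exhausts every admissible $\beta$, including the case $l=\infty$.

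It then remains to recover the formula for $w$. In every case an escaper of $[\![a,b]\!]$ is either a right-mover of the initial block escaping on the right ($\beta(b_n)>b$) or a left-mover of the final block escaping on the left ($\beta(b_n)<a$); the boundedness inequalities guarantee at most one of each, hence the promised bound $|E|\leq 1$, and on any interval carrying an escaper $w$ must equal that unique point, while on the rest it is free. Reading this off index-by-index yields the displayed piecewise definition of $w$. I expect the real work to be bookkeeping rather than insight: keeping the index ranges and the single pivot's direction straight at the transition (the far-left pivot of Case \circled{2} versus the far-right pivot of Case \circled{3}), and verifying that the artificial endpoints $b_{-1}=-1$, $\beta(-1)=0$ and $b_{l+1}=\infty$ make every boundary instance of the inequalities and of the $w$-formula come out right, across the overlapping extremes $j=-1$, $j=l$, $l=-1$ and $l=\infty$.
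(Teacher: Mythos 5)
Your structural analysis of $\beta$ is correct and rests on the same device as the paper's proof: counting escapers of the interval spanned by two non-fixed points. The paper packages this as two propagation assertions (its boxed (\ref{Eq1}) and (\ref{Eq2}): once a left-mover or a far-right-mover occurs, every later non-fixed point is a bounded left-mover) and then takes $n^*$ maximal, whereas you phrase it as sign-structure blocks plus consecutive-pair boundedness plus no-double-anomaly at the threshold; these are equivalent reorganizations of the same argument, and your case split ($j=l$ gives Case \circled{1}; bounded $b_j$ gives Case \circled{2} with $n^*=j$; far-right $b_j$ gives Case \circled{3} with $n^*=j-1$) matches the paper's.

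There is, however, one invalid inference in your recovery of $w$: ``the boundedness inequalities guarantee at most one of each, hence the promised bound $|E|\leq 1$.'' At most one right-escaper plus at most one left-escaper gives $|E|\leq 2$, not $|E|\leq 1$; you must separately exclude the coexistence, in one interval $[\![a,b]\!]$, of a right-mover with $\beta(b_n)>b$ and a left-mover with $\beta(b_{n'})<a$. The paper does exactly this (the disjointness of its subcases 2.1 and 2.2) by chaining the inequalities downward from $b_{n'}$ through the pivot to $b_n$, arriving at $a > \cdots > b$, a contradiction. For the forward implication actually asserted by the proposition your lapse is harmless, since $|E|\leq 1$ is the hypothesis itself (as your own opening reformulation observes), so the fix is simply to cite the hypothesis rather than the inequalities at that point. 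But the paper's proof also establishes the converse inside each case --- that every $(\beta,w)$ of the displayed form really is a solution, which is what licenses the ``any element of $[\![a,b]\!]$ otherwise'' clauses and the surrounding claim that non-successor examples exist --- and for that half the disjointness argument is indispensable: there one must derive $|E|\leq 1$ from the inequality structure alone, and your sketch as written does not do it.
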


\begin{proof}
Let's show that 
\begin{equation}
\label{Eq1}
\boxed{\forall n \in [\![0,l]\!] : \left( \beta(b_n) < b_n \Rightarrow \forall m \in [\![n+1,l]\!] : b_{m-1} \leq \beta(b_m) < b_m \right) }
\end{equation}
and
\begin{equation}
\label{Eq2}
\boxed{\forall n \in [\![0,l]\!] : \left( \beta(b_n) > b_{n+1} \Rightarrow \forall m \in [\![n+1,l]\!] : b_{m-1} \leq \beta(b_m) < b_m \right) }.
\end{equation}
Suppose that $\beta(b_n) < b_n$ for a certain $n \in \mathbb{N}$ and $n+1 \leq l$. We have ${\beta([\![b_n,b_{n+1}]\!] \setminus \{w([\![b_n,b_{n+1}]\!])\}) \subseteq [\![b_n,b_{n+1}]\!]}$. If $w([\![b_n,b_{n+1}]\!]) = b_{n+1}$, then $b_n \leq \beta(b_n)$ which is a contradiction. Therefore $w([\![b_n,b_{n+1}]\!]) \neq b_{n+1}$ and so $b_n \leq \beta(b_{n+1}) < b_{n+1}$ since $b_{n+1}$ is not a fixed point. By immediate induction, we have ${\forall m \in [\![n+1,l]\!] : b_{m-1} \leq \beta(b_m) < b_m}$. \\
Therefore assertion \ref{Eq1} holds. \\
Suppose that $\beta(b_n) > b_{n+1}$ for a certain $n \in \mathbb{N}$ and $n + 1 \leq l$. We have ${\beta([\![b_n,b_{n+1}]\!] \setminus \{w([\![b_n,b_{n+1}]\!])\}) \subseteq [\![b_n,b_{n+1}]\!]}$. If $w([\![b_n,b_{n+1}]\!]) = b_{n+1}$, then ${\beta(b_n) \leq b_{n+1}}$ which is a contradiction. Therefore $w([\![b_n,b_{n+1}]\!]) \neq b_{n+1}$ and so ${b_n \leq \beta(b_{n+1}) < b_{n+1}}$ since $b_{n+1}$ is not a fixed point. By immediate induction, we have ${\forall m \in [\![n+1,l]\!] : b_{m-1} \leq \beta(b_m) < b_m}$. \\
Therefore assertion \ref{Eq2} holds. \\
With these assertions in hand, we can now start the proof. We treat the $l = \infty$ and $l < \infty$ cases simultaneously. \\
\textbf{Case \circled{1}}. Suppose $\forall n \in [\![0,l]\!] : b_n < \beta(b_n) \leq b_{n+1}$. \\
Let $a \leq b \in \mathbb{N}$. \\
\textbf{Subcase \circled{1.1}}. Suppose $\exists n \in [\![0,l]\!] : b_n \in [\![a,b]\!] \text{ and } \beta(b_n) > b$. Let's show that there is a unique such $n$. Suppose that $n_1 > n_2$ both satisfy the condition. Since, $n_2 > n_1$, we have $n_2 \geq n_1 + 1$, and since $(b_n)_{n \in \mathbb{N}}$ is strictly increasing, we have 
\[ b \geq b_{n_2} \geq b_{n_1 + 1} \geq \beta(b_{n_1}) > b, \]
which is a contradiction. Therefore there exists a unique $n(a,b) \in [\![0,l]\!]$ such that $b_{n(a,b)} \in [\![a,b]\!] \text{ and } \beta(b_{n(a,b)}) > b$. If we had $w([\![a,b]\!]) \neq b_{n(a,b)}$, then since $\beta([\![a,b]\!]\setminus \{w([\![a,b]\!])\}) \subseteq [\![a,b]\!]$, we would have $\beta(b_{n(a,b)}) \in [\![a,b]\!]$, which contradicts $\beta(b_{n(a,b)}) > b$. Therefore $w([\![a,b]\!]) = b_{n(a,b)}$. \\
\textbf{Subcase \circled{1.2}}. Suppose $\forall n \in [\![0,l]\!] : (b_n \in [\![a,b]\!] \Rightarrow \beta(b_n) \leq b)$. There is no additional constraint in this subcase, we can choose $w([\![a,b]\!])$ arbitrarily from $[\![a,b]\!]$. \\
\textbf{Conversely}. If $l \in \mathbb{N} \cup \{-1, \infty\}$, $\{b_n\}_{n \in [\![0,l]\!]}$ is a strictly increasing sequence, $\beta : \mathbb{N} \to \mathbb{N}$ is a function such that $\mathbb{N} \setminus \{b_n\}_{n \in [\![0,l]\!]}$ are the fixed points of $\beta$, $\forall n \in [\![0,l]\!] : b_n < \beta(b_n) \leq b_{n+1}$, and 
\[ w = \begin{cases} Int_{\omega,*}(\mathbb{N}) &\to \mathbb{N} \\ [\![a,b]\!] &\mapsto b_{n(a,b)} \text { if } \exists n \in [\![0,l]\!] : b_n \in [\![a,b]\!] \text{ and } \beta(b_n) > b \\ [\![a,b]\!] &\mapsto \text{any element of } [\![a,b]\!] \text{ otherwise} \end{cases}, \]
then $\forall a \leq b \in [\![a,b]\!] : \beta([\![a,b]\!] \setminus \{w([\![a,b]\!])\}) \subseteq [\![a,b]\!]$. Indeed, let $a \leq b \in \mathbb{N}$. Because of the uniqueness property of $b_{n(a,b)}$ (when it exists), any $x \in [\![a,b]\!] \setminus \{w([\![a,b]\!])\}$ is either a fixed point (in which case $\beta(x) = x \in [\![a,b]\!]$), or satisfies $x = b_n$ with $\beta(b_n) \leq b$, in which case, since $b_n < \beta(b_n)$, we have $a \leq x =  b_n < \beta(b_n) = \beta(x) \leq b$. \\

\noindent Suppose now $\neg \left( \forall n \in [\![0,l]\!] : b_n < \beta(b_n) \leq b_{n+1} \right)$. Necessarily, $l \geq 0$. Let $n^* = \max \{ n \in [\![-1,l]\!] : \forall m \leq n : b_m < \beta(b_m) \leq b_{m+1} \}$. Necessarily, $n^* \in [\![-1,l-1]\!]$. Notice that if $\beta(b_{n^* +1}) \leq b_{n^*+2}$, then by maximality of $n^*$, we have $\beta(b_{n^*+1}) < b_{n^*+1}$ since $b_{n^*+1}$ is not a fixed point of $\beta$. Therefore we have either $\beta(b_{n^*+1}) < b_{n^*+1}$ or $\beta(b_{n^*+1}) > b_{n^*+2}$. \\

\noindent \textbf{Case \circled{2}}.  Suppose $\forall m \leq n^* : b_m < \beta(b_m) \leq b_{m+1}$ and $\beta(b_{n^*+1}) < b_{n^*+1}$. By assertion \ref{Eq1}, we have $\forall m \in [\![n^*+2,l]\!] : b_{m-1} \leq \beta(b_m) < b_m$. \\
Let $a \leq b \in \mathbb{N}$. \\
\textbf{Subcase \circled{2.1}}. Suppose $\exists n \in [\![0,n^*]\!] : b_n \in [\![a,b]\!] \text{ and } \beta(b_n) > b$. Let's show that there is a unique such $n$. Suppose that $n_1 > n_2$ both satisfy the condition. Since, $n_2 > n_1$, we have $n_2 \geq n_1 + 1$, and since $(b_n)_{n \in \mathbb{N}}$ is strictly increasing, we have 
\[ b \geq b_{n_2} \geq b_{n_1 + 1} \geq \beta(b_{n_1}) > b, \]
which is a contradiction. Therefore there exists a unique $n(a,b) \in [\![0,n^*]\!]$ such that $b_{n(a,b)} \in [\![a,b]\!] \text{ and } \beta(b_{n(a,b)}) > b$. If we had $w([\![a,b]\!]) \neq b_{n(a,b)}$, then since $\beta([\![a,b]\!]\setminus \{w([\![a,b]\!])\}) \subseteq [\![a,b]\!]$, we would have $\beta(b_{n(a,b)}) \in [\![a,b]\!]$, which contradicts $\beta(b_{n(a,b)}) > b$. Therefore $w([\![a,b]\!]) = b_{n(a,b)}$. \\
\textbf{Subcase \circled{2.2}}. Suppose $\exists n \in [\![n^*+1,l]\!] : b_n \in [\![a,b]\!] \text{ and } \beta(b_n) < a $. Let's show that there is a unique such $n$. Suppose that $n_1 > n_2$ both satisfy the condition. Since, $n_2 > n_1$, we have $n_2 -  1 \geq n_1$ and $n_2 \geq n^*+2$, and since $(b_n)_{n \in \mathbb{N}}$ is strictly increasing, we have 
\[ a > \beta(b_{n_2}) \geq b_{n_2 - 1} \geq b_{n_1} \geq a, \]
which is a contradiction. Therefore there exists a unique $n'(a,b) \in [\![n^*+1,l]\!]$ such that $b_{n'(a,b)} \in [\![a,b]\!] \text{ and } \beta(b_{n'(a,b)}) > b$. If we had $w([\![a,b]\!]) \neq b_{n'(a,b)}$, then since $\beta([\![a,b]\!]\setminus \{w([\![a,b]\!])\}) \subseteq [\![a,b]\!]$, we would have $\beta(b_{n'(a,b)}) \in [\![a,b]\!]$, which contradicts $\beta(b_{n'(a,b)}) > b$. Therefore $w([\![a,b]\!]) = b_{n'(a,b)}$. \\
Let's show that subcases 2.1 and 2.1 are disjoint. Suppose the contrary. Then 
\begin{align*}
a > \beta(b_{n'(a,b)}) &\geq b_{n'(a,b)-1}  > \beta(b_{n'(a,b)-1}) \geq b_{n'(a,b)-2} > \cdots  \\
&> \beta(b_{n^*+2}) \geq b_{n^* + 1} \geq \beta(b_{n^*}) > b_{n^*} \geq \beta(b_{n^* - 1}) > \cdots > \beta(b_{n(a,b)}) > b, 
\end{align*}
which is a contradiction. \\
\textbf{Subcase \circled{2.3}}. Suppose $\forall n \in [\![0,n^*]\!] : (b_n \in [\![a,b]\!] \Rightarrow \beta(b_n) \leq b)$ and ${\forall n \in [\![n^*+1,l]\!] : (b_n \in [\![a,b]\!] \Rightarrow \beta(b_n) \geq a)}$. There is no additional constraint in this subcase, we can choose $w([\![a,b]\!])$ arbitrarily from $[\![a,b]\!]$. \\
\textbf{Conversely}. If $l \geq 0$, $\{b_n\}_{n \in [\![0,l]\!]}$ is a strictly increasing sequence, $\beta : \mathbb{N} \to \mathbb{N}$ is a function such that $\mathbb{N} \setminus \{b_n\}_{n \in [\![0,l]\!]}$ are the fixed points of $\beta$, $n^* \in [\![-1,l-1]\!]$, $\forall m \leq n^* : b_m < \beta(b_m) \leq b_{m+1}$, $\beta(b_{n^*+1}) < b_{n^*+1}$, $\forall m \in [\![n^*+2,l]\!] : b_{m-1} \leq \beta(b_m) < b_m$, and 
\[ w = \begin{cases} Int_{\omega,*}(\mathbb{N}) &\to \mathbb{N} \\ [\![a,b]\!] &\mapsto b_{n(a,b)} \text { if } \exists n \in [\![0,n^*]\!] : b_n \in [\![a,b]\!] \text{ and } \beta(b_n) > b \\ [\![a,b]\!] &\mapsto b_{n'(a,b)} \text { if } \exists n \in [\![n^*+1,l]\!] : b_n \in [\![a,b]\!] \text{ and } \beta(b_n) < a \\ [\![a,b]\!] &\mapsto \text{any element of } [\![a,b]\!] \text{ otherwise} \end{cases}, \]
then $\forall a \leq b \in [\![a,b]\!] : \beta([\![a,b]\!] \setminus \{w([\![a,b]\!])\}) \subseteq [\![a,b]\!]$. Indeed, let $a \leq b \in \mathbb{N}$. Because of the uniqueness properties of $b_{n(a,b)}$ and $b_{n'(a,b)}$ (when they exist) and the disjointness of subcases 2.1 and 2.2, any $x \in [\![a,b]\!] \setminus \{w([\![a,b]\!])\}$ is either a fixed point (in which case $\beta(x) = x \in [\![a,b]\!]$), or satisfies $x = b_n$ with $n \in [\![0,n^*]\!]$ and $\beta(b_n) \leq b$, in which case, since $b_n < \beta(b_n)$, we have $a \leq x =  b_n < \beta(b_n) = \beta(x) \leq b$, or satisfies $x = b_n$ with $n \geq n^* + 1$ and $\beta(b_n) \geq a$, in which case, since $\beta(b_n) < b_n$, we have $a \leq \beta(b_n) = \beta(x) < b_n = x \leq b$.  \\
\textbf{Case \circled{3}}.  Suppose $\forall m \leq n^* : b_m < \beta(b_m) \leq b_{m+1}$ and $\beta(b_{n^*+1}) > b_{n^*+2}$. By assertion \ref{Eq2}, we have $\forall m \geq n^* + 2 : b_{m-1} \leq \beta(b_m) < b_m$. Then the treatment from now on is analogous to case 2 (since we haven't used the conditions $\beta(b_{n^*+1}) < b_{n^*+1}$ or $\beta(b_{n^*+1}) > b_{n^*+2}$) with the same necessary and sufficient form of the function $w$.\\
\end{proof}

Requiring that for such functions, $\beta(w([\![a,b]\!])) \notin [\![a,b]\!]$ for all $a \leq b \in \mathbb{N}$, leads to

\begin{corollary}
Let $\beta : \mathbb{N} \to \mathbb{N}$ and $w : Int_{\omega,*}(\mathbb{N}) \to \mathbb{N}$ be two functions such that 
\begin{align*}
\forall [\![a,b]\!] \in Int_{\omega,*}(\mathbb{N}) &: \quad \quad \quad \quad \quad w([\![a,b]\!]) \in [\![a,b]\!] \\
 &\text{ and } \beta([\![a,b]\!]\setminus \{w([\![a,b]\!])\}) \subseteq [\![a,b]\!] \text{ and } \beta(w([\![a,b]\!])) \notin [\![a,b]\!].
\end{align*}
Then either \\
\textbf{Case \circled{1}}. $\beta : \begin{cases} \mathbb{N} &\to \mathbb{N} \\ n &\mapsto n+1 \end{cases}$ and $w : \begin{cases} Int_{\omega,*}(\mathbb{N}) &\to \mathbb{N} \\ [\![a,b]\!] &\mapsto b \end{cases}$ \\
\textbf{Case \circled{2}}. There exists $n^* \neq u \in \mathbb{N}$ such that $\beta : \begin{cases} \mathbb{N} &\to \mathbb{N} \\ n &\mapsto n+1 \text { for } n < n^* \\ n^* &\mapsto u \\ n &\mapsto n-1 \text{ for } n > n^* \end{cases}$ and $w : \begin{cases} Int_{\omega,*}(\mathbb{N}) &\to \mathbb{N} \\ [\![a,b]\!] &\mapsto b \text{ if } b \in [\![0,n^*]\!] \\  [\![a,b]\!] &\mapsto a \text{ if } a \geq n^*+  1 \end{cases}$.
\end{corollary}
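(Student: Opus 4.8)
The plan is to bootstrap off Proposition~\ref{propBetaInt}, which already classifies every pair $(\beta,w)$ satisfying the two weaker requirements $w([\![a,b]\!])\in[\![a,b]\!]$ and $\beta([\![a,b]\!]\setminus\{w([\![a,b]\!])\})\subseteq[\![a,b]\!]$ into the three families \circled{1}, \circled{2}, \circled{3}. Our corollary merely adds the single constraint $\beta(w([\![a,b]\!]))\notin[\![a,b]\!]$, so the whole task is to intersect each of these families with the new requirement and read off what survives. I would therefore open the proof by invoking Proposition~\ref{propBetaInt} and then dispatch its three cases one at a time.

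The decisive first move is to test the extra condition on degenerate intervals. For a singleton $[\![a,a]\!]$ the only admissible witness is $w([\![a,a]\!])=a$, and the new requirement $\beta(a)\notin[\![a,a]\!]$ says precisely $\beta(a)\neq a$. Hence $\beta$ has no fixed point at all, so in the language of Proposition~\ref{propBetaInt} the increasing sequence of non-fixed points exhausts $\mathbb{N}$; that is, $l=\infty$ and $b_n=n$ for every $n$. This one observation collapses all three families, since the abstract increments $b_n<\beta(b_n)\le b_{n+1}$ and $b_{m-1}\le\beta(b_m)<b_m$ degenerate on consecutive integers to $\beta(n)=n+1$ and $\beta(m)=m-1$ respectively.

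With $b_n=n$ in force I would translate each case literally. Family \circled{1} forces $\beta(n)=n+1$ for all $n$, i.e. $\beta=succ$; the unique-index rule defining $w$ selects the $n$ with $n\in[\![a,b]\!]$ and $n+1>b$, namely $n=b$, giving $w([\![a,b]\!])=b$, which is exactly corollary Case~\circled{1}. Families \circled{2} and \circled{3} both produce a tent-shaped map: $\beta(n)=n+1$ strictly left of a turnaround index, a single value $\beta(N)=u\neq N$ at the turnaround, and $\beta(m)=m-1$ strictly to its right; the two families differ only in whether $u<N$ or $u>N+1$, and after renaming the turnaround index they assemble into the single tent family of corollary Case~\circled{2}, with $w$ equal to the right endpoint on intervals lying left of the turnaround and to the left endpoint on intervals lying to its right. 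Matching the proposition's index $n^*$ (the last purely increasing step) against the corollary's $n^*$ (the turnaround itself) is a bookkeeping shift by one that must be tracked but causes no real trouble.

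The step I expect to be the genuine obstacle is the converse verification that the displayed $w$ really meets $\beta(w([\![a,b]\!]))\notin[\![a,b]\!]$ on \emph{every} interval, and not merely on those lying wholly on one side of the turnaround. The delicate intervals are exactly those straddling the turnaround index $N$: there the only candidate for the ejected element is $N$ itself, which escapes precisely when $u\notin[\![a,b]\!]$, so the entire argument hinges on controlling the position of the turnaround value $u$ relative to the endpoints $a$ and $b$. I would therefore devote the bulk of the write-up to pinning down $w$ on these straddling intervals and confirming that removing it leaves every remaining point inside while the removed point genuinely leaves $[\![a,b]\!]$; this is the part where the escape condition does the real work and where the characterization must be checked with the most care.
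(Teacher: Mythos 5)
Your opening moves coincide with the paper's proof: testing the escape condition on singletons $[\![a,a]\!]$ to force $\beta(a)\neq a$, hence $b_n=n$ for all $n$, and then invoking Proposition \ref{propBetaInt}; your handling of the proposition's Case \circled{1} (yielding $\beta=succ$ and $w([\![a,b]\!])=b$) is also correct. The genuine gap is the step you defer to the end: the claim that the proposition's Cases \circled{2} and \circled{3} survive the extra requirement and ``assemble into'' the corollary's tent family, with the straddling intervals handled by ``controlling the position of $u$''. That verification cannot succeed, because no tent map satisfies the corollary's hypotheses at all. Concretely, let $\beta(n)=n+1$ for $n<n^*$, $\beta(n^*)=u\neq n^*$, $\beta(n)=n-1$ for $n>n^*$, and let $J$ be the interval with endpoints $\min(n^*,u)$ and $\max(n^*,u)$. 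If $u<n^*$, then each $n\in J$ with $n<n^*$ has $\beta(n)=n+1\in J$, while $\beta(n^*)=u\in J$; if $u>n^*$, then $\beta(n^*)=u\in J$, while each $n\in J$ with $n>n^*$ has $\beta(n)=n-1\in J$. In both cases $\beta(J)\subseteq J$, so \emph{every} $w\in J$ has $\beta(w)\in J$, and the hypothesis $\beta(w(J))\notin J$ is violated on the single interval $J$. (Equivalently: on any interval containing both $n^*$ and an element larger than $n^*$, the only candidate for ejection is $n^*$, which requires $u\notin[\![a,b]\!]$; but such intervals can be chosen to contain $u$, e.g.\ $[\![0,M]\!]$ with $M\geq\max(u,n^*+1)$.)

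The correct completion of your plan is therefore the opposite of what you anticipate: the straddling-interval analysis eliminates the tent family entirely, so the only pair satisfying the hypotheses is $\beta=succ$ with $w([\![a,b]\!])=b$, and the corollary holds simply because its Case \circled{1} exhausts all solutions (its Case \circled{2} is vacuous). Be aware that the paper's own proof shares this blind spot: it consists of exactly your first step (no fixed points, then apply Proposition \ref{propBetaInt}) and never tests the escape condition against the tent maps, which is why the spurious Case \circled{2} appears in the statement. So your instinct that the straddling intervals are ``where the escape condition does the real work'' is right; what is missing is carrying that work out, and doing so overturns the conclusion you planned to reach.
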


\begin{proof}
Notice that for all $a \in \mathbb{N}$, $\beta(w([a,a])) \notin [a,a]$ implies that $\beta(a)\neq a$, and so the sequence of non-fixed points of $\beta$ in strictly increasing order is $\{b_n\}_{n \in \mathbb{N}} = \{n\}_{n \in \mathbb{N}}$. We can then apply proposition \ref{propBetaInt}. \\
\end{proof}

\section{Functions with a maximal number of finite invariant supersets}
\label{sectionExactSuperPreservUnderDirectImage}

\subsection{Solution to the general problem} 

\begin{proposition}
\label{propCharMapPreservingSupersets}
Let $\alpha : I \to I$ and $G : \mathcal{P}_{\omega,*}(I) \to \mathcal{P}_{\omega,*}(I)$. Then the following conditions are equivalent
\begin{enumerate}
\item $\forall I^* \in \mathcal{P}_{\omega,*}(I) : I^* \subseteq G(I^*) \text{ and } \alpha(G(I^*)) \subseteq G(I^*)$. 
\item The orbits of $\alpha$ are finite and there exists $H : \mathcal{P}_{\omega,*}(I) \to \mathcal{P}_{\omega}(I)$ such that 
\[ \forall I^* \in \mathcal{P}_{\omega,*}(I) : G(I^*) = \bigcup_{a \in I^* \cup H(I^*)} \mathcal{O}^{+}_\alpha(a). \]
\end{enumerate}
\end{proposition}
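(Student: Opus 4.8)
The plan is to prove both implications directly, the whole argument resting on a single elementary observation: if $S \subseteq I$ satisfies $\alpha(S) \subseteq S$ and $a \in S$, then $\mathcal{O}^{+}_\alpha(a) \subseteq S$. Indeed, $a = \alpha^0(a) \in S$, and if $\alpha^n(a) \in S$ then $\alpha^{n+1}(a) = \alpha(\alpha^n(a)) \in \alpha(S) \subseteq S$, so by induction the entire forward orbit lies in $S$. Equivalently, a set is forward-invariant under $\alpha$ if and only if it is a union of forward orbits, and in that case it equals the union of the orbits of its own elements.

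For the implication $(2) \Rightarrow (1)$, I would assume the orbits of $\alpha$ are finite and that $G(I^*) = \bigcup_{a \in I^* \cup H(I^*)} \mathcal{O}^{+}_\alpha(a)$. Since $I^* \cup H(I^*)$ is finite and each orbit in the union is finite, $G(I^*)$ is a finite set; it is nonempty because it contains $I^*$ (each $a \in I^*$ lies in $\mathcal{O}^{+}_\alpha(a)$, as $\alpha^0(a) = a$), which simultaneously yields $I^* \subseteq G(I^*)$. Forward invariance $\alpha(G(I^*)) \subseteq G(I^*)$ follows from the fact that a union of forward orbits is forward-invariant, since $\alpha(\alpha^n(a)) = \alpha^{n+1}(a) \in \mathcal{O}^{+}_\alpha(a)$.

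For the implication $(1) \Rightarrow (2)$, I would first show that every orbit of $\alpha$ is finite. Fixing $a \in I$ and applying the hypothesis to the singleton $I^* = \{a\} \in \mathcal{P}_{\omega,*}(I)$, the set $G(\{a\})$ is finite (it lies in $\mathcal{P}_{\omega,*}(I)$), contains $a$, and satisfies $\alpha(G(\{a\})) \subseteq G(\{a\})$; by the observation above, $\mathcal{O}^{+}_\alpha(a) \subseteq G(\{a\})$ is therefore finite. Next I would exhibit $H$, and the clean choice is $H := G$ (legitimate since $\mathcal{P}_{\omega,*}(I) \subseteq \mathcal{P}_{\omega}(I)$). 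Because $I^* \subseteq G(I^*)$ we have $I^* \cup H(I^*) = G(I^*)$, so it remains to verify $G(I^*) = \bigcup_{a \in G(I^*)} \mathcal{O}^{+}_\alpha(a)$: the inclusion $\subseteq$ holds because each $x \in G(I^*)$ lies in $\mathcal{O}^{+}_\alpha(x)$, and the inclusion $\supseteq$ holds because $G(I^*)$ is forward-invariant and hence contains the full orbit of each of its elements.

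I do not anticipate any genuine obstacle, as the statement is essentially a reformulation of the slogan ``a finite forward-invariant superset is the same thing as a finite union of finite orbits.'' The only point that requires a moment's thought is in $(1) \Rightarrow (2)$, where one must notice that finiteness of the orbits is not an auxiliary assumption but is forced by the finiteness of $G(\{a\})$, and that decomposing a forward-invariant set into the orbits of its own elements permits the painless choice $H = G$ rather than the construction of a more elaborate $H$.
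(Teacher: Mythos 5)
Your proposal is correct and follows essentially the same route as the paper's proof: the same induction showing that a forward-invariant set containing $a$ contains $\mathcal{O}^{+}_\alpha(a)$, the same application to singletons $\{a\}$ to force finiteness of orbits, and the same choice $H := G$ with the identical two-inclusion verification. No gaps to report.
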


\begin{proof}
(1) $\Rightarrow$ (2). Let $a \in I$. We have $\alpha(a) \in G(\{a\})$ since $a \in G(\{a\})$ and $\alpha(G(\{a\})) \subseteq G(\{a\})$. By immediate induction, we have $\mathcal{O}^{+}_\alpha(a) \subseteq G(\{a\})$. Since $G(\{a\})$ is finite, this implies that $\mathcal{O}^{+}_\alpha(a)$ is finite. Therefore the orbits of $\alpha$ are finite. Furthermore, let $H(I^*) = G(I^*)$ for all $I^* \in \mathcal{P}_{\omega,*}(I)$. Let $I^* \in \mathcal{P}_{\omega,*}(I)$. We have $I^* \cup H(I^*) = I^* \cup G(I^*) = G(I^*)$. Let $a \in G(I^*)$. We have $\alpha(a) \in G(I^*)$ since $a \in G(I^*)$ and $\alpha(G(I^*)) \subseteq G(I^*)$. By immediate induction, we have $\mathcal{O}^{+}_\alpha(a) \subseteq G(I^*)$. Therefore $\bigcup_{a \in G(I^*)} \mathcal{O}^{+}_\alpha(a) \subseteq G(I^*)$. The reverse inclusion is obvious since $a \in \mathcal{O}^{+}_\alpha(a)$ for all $a \in G(I^*)$. \\
(2) $\Rightarrow$ (1). Let $I^* \in \mathcal{P}_{\omega,*}(I)$. To show that $I^* \subseteq G(I^*)$, let $a \in I^*$. We have ${a \in \mathcal{O}^{+}_\alpha(a) \subseteq G(I^*)}$. Therefore $I^* \subseteq G(I^*)$. To show that $\alpha(G(I^*)) \subseteq G(I^*)$, let $a \in G(I^*)$. We have \[
\alpha(a) \in \alpha \left(\bigcup_{a \in I^* \cup H(I^*)} \mathcal{O}^{+}_\alpha(a) \right) = \bigcup_{a \in I^* \cup H(I^*)} \alpha \left( \mathcal{O}^{+}_\alpha(a) \right) \subseteq \bigcup_{a \in I^* \cup H(I^*)} \mathcal{O}^{+}_\alpha(a). \]
Therefore $\alpha(G(I^*)) \subseteq G(I^*)$.
\end{proof}

\subsection{Solution to a more constrained problem in $\mathbb{N}$}

The following proposition gives a characterization of the functions $\alpha : \mathbb{N} \to \mathbb{N}$ and $G : \mathcal{P}_{\omega,*}(\mathbb{N}) \to \mathcal{P}_{\omega,*}(\mathbb{N})$ that satisfy the conditions of proposition \ref{propCharMapPreservingSupersets}, plus the additional hypothesis that $ \forall I^* \in \mathcal{P}_{\omega,*}(\mathbb{N}) : \max(G(I^*)) \in \alpha(I^*)$.

\begin{proposition}
\label{propCharMapPreservingSupersetsWithMaxCondition}
Let $\alpha : \mathbb{N} \to \mathbb{N}$ and $G : \mathcal{P}_{\omega,*}(\mathbb{N}) \to \mathcal{P}_{\omega,*}(\mathbb{N})$ such that 
\[ \forall I^* \in \mathcal{P}_{\omega,*}(\mathbb{N}) : I^* \subseteq G(I^*) \text{, } \alpha(G(I^*)) \subseteq G(I^*) \text{ and } \max(G(I^*)) \in \alpha(I^*). \]
Then $\alpha$ admits an infinite number of fixed points, and letting this sequence be $\{b_n\}_{n \in \mathbb{N}}$ in strictly increasing order, there exists a function $j : \mathbb{N} \to \mathbb{N}$ such that 
\[ \forall a \in \mathbb{N} : \alpha(a) = b_{j(a)} \]
and
\begin{align*}
\forall a \in \mathbb{N} : j(a) &= n \text{ if } \exists n \in \mathbb{N} : a = b_n \\
j(a) &\geq r(a) \text{ if } a \notin \{b_n\}_{n \in \mathbb{N}}
\end{align*}
where $r(a) := \min\{n \in \mathbb{N} : b_n > a \}$, and there exists a function $H : \mathcal{P}_{\omega,*}(\mathbb{N}) \to  \mathcal{P}_{\omega}(\mathbb{N})$ such that $\forall I^* \in  \mathcal{P}_{\omega,*}(\mathbb{N}) : G(I^*) = \bigcup_{a \in I^* \cup H(I^*)} \{a,\alpha(a)\}$ and $\forall I^* \in  \mathcal{P}_{\omega,*}(\mathbb{N}) : \exists a \in I^* : \forall x \in H(I^*) : j(x) \leq j(a)$. Conversely, any such $\alpha$ and $G$ are valid solutions. \\
\end{proposition}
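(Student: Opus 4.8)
The plan is to first extract the key structural fact that the image of $\alpha$ lies in its set of fixed points, then to read off the sequence $\{b_n\}$, the function $j$, and the description of $G$ from Proposition \ref{propCharMapPreservingSupersets}, and finally to translate the maximum condition into the stated constraint on $H$.

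First I would apply the hypotheses to singletons. For $a \in \mathbb{N}$, taking $I^* = \{a\}$, the conditions $a \in G(\{a\})$ and $\alpha(G(\{a\})) \subseteq G(\{a\})$ give by immediate induction $\mathcal{O}^{+}_\alpha(a) \subseteq G(\{a\})$, while $\max(G(\{a\})) \in \alpha(\{a\}) = \{\alpha(a)\}$ forces $\max(G(\{a\})) = \alpha(a)$. In particular $\alpha(a) \geq a$ (as $a \in G(\{a\})$) and $\alpha^2(a) \leq \alpha(a)$ (as $\alpha^2(a) \in \mathcal{O}^{+}_\alpha(a) \subseteq G(\{a\})$). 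Applying the inequality $\alpha(y) \geq y$, valid for every $y$, with $y = \alpha(a)$ gives $\alpha^2(a) \geq \alpha(a)$, hence $\alpha^2(a) = \alpha(a)$: that is, $\alpha(a)$ is a fixed point of $\alpha$. Thus $\alpha(\mathbb{N})$ is contained in the fixed-point set of $\alpha$, and since $\alpha(a) \geq a$ for all $a$ this set is unbounded, hence infinite. Enumerating it in increasing order as $\{b_n\}_{n \in \mathbb{N}}$ and defining $j(a)$ by $\alpha(a) = b_{j(a)}$, one reads off $j(b_n) = n$, while for a non-fixed point $a$ one has $\alpha(a) > a$, so $b_{j(a)} > a$ and $j(a) \geq r(a)$.

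Next I would invoke Proposition \ref{propCharMapPreservingSupersets}: the first two hypotheses are precisely its condition (1), so the orbits of $\alpha$ are finite and there exists $H : \mathcal{P}_{\omega,*}(\mathbb{N}) \to \mathcal{P}_{\omega}(\mathbb{N})$ with $G(I^*) = \bigcup_{a \in I^* \cup H(I^*)} \mathcal{O}^{+}_\alpha(a)$. Since each $\alpha(a)$ is a fixed point, $\mathcal{O}^{+}_\alpha(a) = \{a, \alpha(a)\}$, which yields the displayed form of $G$. For the maximum condition, $\alpha(a) \geq a$ gives $\max\{a, \alpha(a)\} = \alpha(a) = b_{j(a)}$, and as the $b_n$ increase, $\max(G(I^*)) = b_M$ with $M = \max_{a \in I^* \cup H(I^*)} j(a)$. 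The requirement $\max(G(I^*)) \in \alpha(I^*) = \{b_{j(a)} : a \in I^*\}$ then says exactly that $M$ is attained at some element of $I^*$, which is equivalent to $\max_{a \in I^*} j(a) \geq \max_{x \in H(I^*)} j(x)$, i.e. to the stated constraint $\exists a \in I^* : \forall x \in H(I^*) : j(x) \leq j(a)$. The converse runs the same computations backward: from data of the stated form one has $\mathcal{O}^{+}_\alpha(a) = \{a, \alpha(a)\}$ (as $\alpha(a) = b_{j(a)}$ is fixed), so Proposition \ref{propCharMapPreservingSupersets} recovers hypotheses (1) and (2), and the constraint on $H$ recovers $\max(G(I^*)) \in \alpha(I^*)$ via the identical computation of the maximum.

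The step I expect to be the crux is the first one, namely establishing that every value of $\alpha$ is a fixed point, since once the image of $\alpha$ is known to sit inside the fixed-point set, everything downstream is bookkeeping; the mechanism is to pit the maximum condition for the singleton $\{a\}$ against the same condition for $\{\alpha(a)\}$.
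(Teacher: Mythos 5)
Your proof is correct and follows essentially the same route as the paper: the singleton argument pitting $\max(G(\{a\}))=\alpha(a)$ against $\alpha(y)\geq y$ to show every value of $\alpha$ is a fixed point (the paper's boxed assertion $\alpha(\alpha(n))=\alpha(n)\geq n$), then reading off $\{b_n\}$ and $j$, invoking Proposition \ref{propCharMapPreservingSupersets} to get $H$, and translating the maximum condition. If anything, your write-up is slightly more complete, since you make explicit the computation $\max(G(I^*))=b_M$ with $M=\max_{a\in I^*\cup H(I^*)}j(a)$ and the converse verification, both of which the paper dismisses as ``easy to show.''
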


\begin{proof}
\textbf{Direct implication.} First, let's show that 
\begin{equation}
\label{eq0}
\boxed{\forall n \in \mathbb{N} : \alpha(\alpha(n)) = \alpha(n) \geq n}
\end{equation}
We have $\forall n \in \mathbb{N} : \max(G(\{n\})) \in \alpha(\{n\})$, so $\alpha(n) = \max(G(\{n\})) \in G(\{n\})$. Let $n \in \mathbb{N}$. We have $\alpha(\alpha(n)) \in G(\{n\}$ since $\alpha(n) \in G(\{n\})$ (we use the hypothesis $\alpha(G(I^*)) \subseteq G(I^*)$ for $I^* = \{n\}$), and so $\alpha(\alpha(n)) \leq \alpha(n)$ since $\alpha(n) = \max(G(\{n\}))$. Moreover, we have $n \in \{n\} \subseteq G(\{n\})$, and so $n \leq \max(G(\{n\})) = \alpha(n)$. Replacing $n$ by $\alpha(n)$, we have $\forall n \in \mathbb{N} : \alpha(\alpha(n)) \geq \alpha(n)$. \\
Therefore $\forall n \in \mathbb{N} : \alpha(\alpha(n)) = \alpha(n) \geq n$, and so assertion \ref{eq0} holds. \\
Notice that if $\alpha$ is injective, then by assertion \ref{eq0} we have $\forall n \in \mathbb{N} : \alpha(n) = n$, and $\alpha$ is the identity function. Otherwise, $\alpha$ admits a sequence of fixed points $\{\alpha(n)\}_{n \in \mathbb{N}}$ tending to infinity since $\forall n \in \mathbb{N} : \alpha(n) \geq n$. Let $\{b_n\}_{n \in \mathbb{N}}$ be the sequence of all fixed points of $\alpha$ in strictly increasing order. Let $b_{-1} = -1$ and extend $\alpha$ by setting $\alpha(-1)=-1$. Then $\forall n \in \mathbb{N} \cup \{-1\} : \forall m \in [b_n + 1, b_{n+1} - 1] : \exists k \geq n+1 : \alpha(m) = b_k$ since $\alpha(m)$ is a fixed point and $\alpha(m) \geq m > b_n$. Moreover, we have $r(m) \leq n+1$ since $b_{n+1} > m$, and so $r(m) \leq k$. Therefore we can set $j(m) = n$ if $m = b_n$ and $j(m)=k$ if $m \in [b_n+1,b_{n+1}-1]$ for some $n \in \mathbb{N} \cup \{-1\}$. \\
Moreover, from proposition \ref{propCharMapPreservingSupersets}, there exists a function $H : \mathcal{P}_{\omega,*}(\mathbb{N}) \to  \mathcal{P}_{\omega}(\mathbb{N})$ such that $\forall I^* \in  \mathcal{P}_{\omega,*}(\mathbb{N}) : G(I^*) = \bigcup_{a \in I^* \cup H(I^*)} \mathcal{O}^{+}_\alpha(a) = \bigcup_{a \in I^* \cup H(I^*)} \{a,\alpha(a)\}$. It's easy to show that the condition $\forall I^* \in  \mathcal{P}_{\omega,*}(\mathbb{N}) : \max(G(I^*)) \in \alpha(I^*)$ means that $\forall I^* \in  \mathcal{P}_{\omega,*}(\mathbb{N}) : \exists a \in I^* : \forall x \in H(I^*) : j(x) \leq j(a)$. \\ \\
\textbf{Conversely.} Given a function $\alpha : \mathbb{N} \to \mathbb{N}$ which admits an infinite number of fixed points $\{b_n\}_{n \in \mathbb{N}}$ in strictly increasing order, such that there exists a function $j : \mathbb{N} \to \mathbb{N}$ such that 
\[ \forall a \in \mathbb{N} : \alpha(a) = b_{j(a)} \]
and
\begin{align*}
\forall a \in \mathbb{N} : j(a) &= n \text{ if } \exists n \in \mathbb{N} : a = b_n \\
j(a) &\geq r(a) \text{ if } a \notin \{b_n\}_{n \in \mathbb{N}}
\end{align*}
where $r(a) := \min\{n \in \mathbb{N} : b_n > a \}$, and such that there exists a function $H : \mathcal{P}_{\omega,*}(\mathbb{N}) \to  \mathcal{P}_{\omega}(\mathbb{N})$ such that $\forall I^* \in  \mathcal{P}_{\omega,*}(\mathbb{N}) : G(I^*) = \bigcup_{a \in I^* \cup H(I^*)} \{a,\alpha(a)\}$ and $\forall I^* \in  \mathcal{P}_{\omega,*}(\mathbb{N}) : \exists a \in I^* : \forall x \in H(I^*) : j(x) \leq j(a)$, then it's easy to check that $\forall I^* \in  \mathcal{P}_{\omega,*}(\mathbb{N}) : \alpha(G(I^*)) \subseteq G(I^*)$ and $\max(G(I^*)) \in \alpha(I^*)$.
\end{proof}

\begin{remark}
If $\alpha$ is assumed to be bijective, then we can also use proposition \ref{propIrreducibilityPhiGu} to deduce $\alpha = Id$. Indeed, suppose that $\alpha : \mathbb{N} \to \mathbb{N}$ and $G : \mathcal{P}_{\omega,*}(\mathbb{N}) \to \mathcal{P}_{\omega,*}(\mathbb{N})$ are such that
\[ \forall I^* \in \mathcal{P}_{\omega,*}(\mathbb{N}) : I^* \subseteq G(I^*) \text{, } \alpha(G(I^*)) \subseteq G(I^*) \text{ and } \max(G(I^*)) \in \alpha(I^*), \] 
and let $\beta : \begin{cases}  \mathbb{N} &\to \mathbb{N} \\ n &\mapsto n+1 \end{cases}$, $\phi = \beta \circ \alpha$, and for all $I^* \in \mathcal{P}_{\omega,*}(\mathbb{N})$, $u(I^*) \in I^*$ such that $\alpha(u(I^*)) = \max(G(I^*))$. \\
Then since $P_2(\phi,G,u)$ is true and $\forall I^* \in \mathcal{P}_{\omega,*}(\mathbb{N}) : \exists a \in G(I^*) : |\mathcal{O}^{+}_\beta(a)| = +\infty$, we have $\alpha = Id$.
\end{remark}

In the following proposition, we require furthermore that $G : \mathcal{P}_{\omega,*}(\mathbb{N}) \to Int_{\omega,*}(\mathbb{N})$.

\begin{proposition}
\label{propCharMapPreservingSupersetsWithMaxCondition2}
Let $\alpha : \mathbb{N} \to \mathbb{N}$ and $G : \mathcal{P}_{\omega,*}(\mathbb{N}) \to Int_{\omega,*}(\mathbb{N})$ such that 
\[ \forall I^* \in \mathcal{P}_{\omega,*}(\mathbb{N}) : I^* \subseteq G(I^*) \text{, } \alpha(G(I^*)) \subseteq G(I^*) \text{ and } \max(G(I^*)) \in \alpha(I^*). \]
Then $\alpha$ admits an infinite number of fixed points, and if this sequence is $\{b_n\}_{n \in \mathbb{N}}$ in strictly increasing order, there exists a function $j : \mathbb{N} \to \mathbb{N}$ such that 
\[ \forall a \in \mathbb{N} : \alpha(a) = b_{j(a)}, \]
\begin{align*}
\forall a \in \mathbb{N} : j(a) &= n \text{ if } \exists n \in \mathbb{N} : a = b_{n} \\
j(a) &= r(a)  \text{ if } a > b_0 \text{ and } a \notin \{b_n\}_{n \in \mathbb{N}}
\end{align*}
where $r(a) := \min\{n \in \mathbb{N} : b_n > a \}$, and 
\[ j(0) \geq j(1) \cdots \geq j(b_0 - 1), \]
and there exists a function $u : \mathcal{P}_{\omega,*}(\mathbb{N}) \to \mathbb{N}$ such that $\forall I^* \in  \mathcal{P}_{\omega,*}(\mathbb{N}) : G(I^*) = [\![u(I^*),\max\{\alpha(a)\}_{a \in I^*}]\!]$, and
\[ \forall I^* \in  \mathcal{P}_{\omega,*}(\mathbb{N}) : u^*(I^*) := \min \{n \leq b_0 : \max\{\alpha(a)\}_{a \in I^*} \geq \alpha(n) \} \leq u(I^*) \leq \min(I^*), \]
Conversely, any such $\alpha$ and $G$ are valid solutions.
\end{proposition}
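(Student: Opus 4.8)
The plan is to treat the two implications separately, inheriting from Proposition \ref{propCharMapPreservingSupersetsWithMaxCondition} (whose hypotheses are implied by ours, since $Int_{\omega,*}(\mathbb{N}) \subseteq \mathcal{P}_{\omega,*}(\mathbb{N})$) everything about the shape of $\alpha$: it has an infinite strictly increasing sequence of fixed points $\{b_n\}_{n \in \mathbb{N}}$, it satisfies $\alpha(a) \geq a$ for all $a$ (assertion \ref{eq0} in that proof), and $\alpha(a) = b_{j(a)}$ with $j(a) = n$ when $a = b_n$ and $j(a) \geq r(a)$ otherwise. The first preliminary observation is that, for every $I^* \in \mathcal{P}_{\omega,*}(\mathbb{N})$, combining $\max(G(I^*)) \in \alpha(I^*)$ with the invariance $\alpha(G(I^*)) \subseteq G(I^*)$ and $I^* \subseteq G(I^*)$ forces $\max(G(I^*)) = \max\{\alpha(a)\}_{a \in I^*}$; since $G(I^*)$ is by hypothesis an interval, setting $u(I^*) := \min(G(I^*))$ gives $G(I^*) = [\![u(I^*), \max\{\alpha(a)\}_{a \in I^*}]\!]$, and $I^* \subseteq G(I^*)$ yields $u(I^*) \leq \min(I^*)$.

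The refinement of $\alpha$ is where the interval hypothesis does its real work. To prove $j(a) = r(a)$ for a non-fixed $a > b_0$, I would write $b_k < a < b_{k+1}$ with $k = r(a) - 1 \geq 0$ and test the \emph{straddling} set $I^* = \{b_k, b_{k+1}\}$: since both endpoints are fixed, $\max\{\alpha(x)\}_{x \in I^*} = b_{k+1}$, so $G(I^*) = [\![u(I^*), b_{k+1}]\!]$ with $u(I^*) \leq b_k$, whence $a \in G(I^*)$; invariance then gives $\alpha(a) \leq b_{k+1}$, which is incompatible with $j(a) \geq k+2$, forcing $j(a) = r(a)$. The monotonicity $j(0) \geq \cdots \geq j(b_0 - 1)$ is obtained by testing singletons: for $0 \leq a < a+1 \leq b_0 - 1$, the interval $G(\{a\}) = [\![u(\{a\}), \alpha(a)]\!]$ contains $a+1$ (because $a+1 < b_0 \leq \alpha(a)$), so invariance forces $\alpha(a+1) \leq \alpha(a)$, i.e. $j(a+1) \leq j(a)$. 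Together with $\alpha(b_0) = b_0 \leq \alpha(b_0-1)$, this shows $\alpha$ is non-increasing on $[\![0, b_0]\!]$, which is the structural fact driving everything that follows.

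For the lower bound on $u(I^*)$, first note that $M := \max\{\alpha(a)\}_{a \in I^*} \geq b_0$ (any $a \in I^*$ gives $\alpha(a) = b_{j(a)} \geq b_0$), so $n = b_0$ lies in $\{n \leq b_0 : M \geq \alpha(n)\}$ and $u^*(I^*)$ is well-defined. By the monotonicity of $\alpha$ on $[\![0,b_0]\!]$ this set is an up-set, so $u^*(I^*)$ is exactly the least $n$ with $\alpha(n) \leq M$. If we had $u(I^*) < u^*(I^*) \leq b_0$, then $\alpha(u(I^*)) > M$; but $u(I^*) = \min(G(I^*)) \in G(I^*)$, so invariance gives $\alpha(u(I^*)) \leq M$, a contradiction. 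Hence $u^*(I^*) \leq u(I^*) \leq \min(I^*)$.

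Finally, for the converse I would verify the three required properties for an arbitrary $\alpha$ and $G$ of the stated form. Both $I^* \subseteq G(I^*)$ (using $u(I^*) \leq \min(I^*)$ and $\max\{\alpha(a)\}_{a\in I^*} \geq \max(I^*)$, since $\alpha(a) \geq a$) and $\max(G(I^*)) \in \alpha(I^*)$ are immediate. The crux is $\alpha(G(I^*)) \subseteq G(I^*)$: the lower bound $\alpha(y) \geq y \geq u(I^*)$ is free, and for the upper bound $\alpha(y) \leq M$ one splits on $y$. If $y > b_0$ is non-fixed, then writing $M = b_m$ one has $y < b_m$ (as $y$ is non-fixed), whence $r(y) \leq m$ and $\alpha(y) = b_{r(y)} \leq b_m = M$; if $y \leq b_0$, then $y \geq u(I^*) \geq u^*(I^*)$ and the monotonicity of $\alpha$ on $[\![0,b_0]\!]$ gives $\alpha(y) \leq \alpha(u^*(I^*)) \leq M$ by definition of $u^*(I^*)$; the fixed case is trivial. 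I expect the main obstacle to be not any single estimate but the correct identification of the test sets — the straddling pair $\{b_k,b_{k+1}\}$ for pinning down $j(a) = r(a)$, and the bookkeeping around $u^*(I^*)$ and the non-increasing behaviour of $\alpha$ on the initial block $[\![0,b_0]\!]$, which is precisely what makes the initial segment of the interval safe under $\alpha$ in the converse.
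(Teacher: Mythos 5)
Your proposal is correct and follows essentially the same route as the paper's proof: it inherits the structure of $\alpha$ from Proposition \ref{propCharMapPreservingSupersetsWithMaxCondition}, identifies $\max(G(I^*)) = \max\{\alpha(a)\}_{a\in I^*}$, uses the straddling pairs $\{b_k,b_{k+1}\}$ to force $j(a)=r(a)$ above $b_0$, uses singletons to get the non-increasing behaviour on $[\![0,b_0-1]\!]$, bounds $u(I^*)$ between $u^*(I^*)$ and $\min(I^*)$ via invariance, and verifies the converse by splitting at $b_0$ exactly as the paper does. The only differences are cosmetic (consecutive singletons instead of all pairs for the monotonicity, a contrapositive phrasing of the $u^*(I^*)\leq u(I^*)$ step, and an explicit check of $I^*\subseteq G(I^*)$ in the converse that the paper leaves implicit).
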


\begin{proof}
\textbf{Direct implication.} $\alpha$ and $G$ satisfy of course all the previous properties of proposition \ref{propCharMapPreservingSupersetsWithMaxCondition}. \\
For all $I^* \in \mathcal{P}_{\omega,*}(\mathbb{N})$, we let $[\![u(I^*),v(I^*)]\!] := G(I^*)$. Assume by way of contradiction that $v(I^*) \neq \max\{\alpha(a)\}_{a \in I^*}$. We have $\forall a \in I^* \subseteq G(I^*) : \alpha(a) \leq v(I^*)$, so $\max\{\alpha(a)\}_{a \in I^*} \leq v(I^*)$. Since $v(I^*) \neq \max\{\alpha(a)\}_{a \in I^*}$, we have $\max\{\alpha(a)\}_{a \in I^*} < v(I^*)$ which is a contradiction since $v(I^*) = \max(G(I^*)) \in \alpha(I^*)$. So $v(I^*) = \max\{\alpha(a)\}_{a \in I^*}$. \\
Applying this to $I^* = \{b_n,b_{n+1}\}$ for $n \in \mathbb{N}$ gives $G(\{b_n,b_{n+1}\}) = [u(\{b_n,b_{n+1}\}),b_{n+1}]$ where $u(\{b_n,b_{n+1}\}) \leq b_n$, and so $\forall x \in [\![b_n,b_{n+1}]\!] : \alpha(x) \leq b_{n+1}$ since $G(I^*)$ is preserved by $\alpha$. Hence $\forall x \in ]\!]b_n,b_{n+1}]\!] : \alpha(x) = b_{n+1}$ (since $j(x) \geq n+1$ by proposition \ref{propCharMapPreservingSupersetsWithMaxCondition}). \\
Applying it for $I^* = \{x\}$ for $x \in [0,b_0[$ gives $G(I^*) = [\![u(\{x\}),\alpha(x)]\!]$ where $u(\{x\}) \leq x$. Since $\alpha(x)$ is a fixed point, we have $\alpha(x) \geq b_0$. Therefore, $\forall y \in [a,b_0[ : \alpha(y) \leq \alpha(x)$ since $G(I^*)$ is preserved by $\alpha$. So $\alpha(0) \geq \alpha(1) \cdots \geq \alpha(b_0-1)$, which is equivalent to $j(0) \geq j(1) \geq \cdots \geq j(b_0 - 1)$.  \\
Moreover, for all $I^* \in \mathcal{P}_{\omega,*}(\mathbb{N})$, we have $u(I^*) \leq \min(I^*)$ since $I^* \subseteq G(I^*)$. \\
Furthermore, for all $I^* \in \mathcal{P}_{\omega,*}(\mathbb{N})$, the set $\{n \leq b_0 : \max\{\alpha(a)\}_{a \in I^*} \geq \alpha(n) \}$ is non-empty (it contains $b_0$), and letting $u^*(I^*) := \min \{n \leq b_0 : \max\{\alpha(a)\}_{a \in I^*} \geq \alpha(n) \}$, we have necessarily $u(I^*) \geq u^*(I^*)$. Indeed, if $u(I^*) > b_0$, the inequality is obvious. Otherwise, it is true since $\alpha(u(I^*)) \in G(I^*)$ (since $\alpha(G(I^*)) \subseteq G(I^*)$ and $u(I^*) \in G(I^*)$), and therefore $\alpha(u(I^*)) \leq \max(G(I^*)) =  \max\{\alpha(a)\}_{a \in I^*}$. \\ \\
\textbf{Conversely.} Given a function $\alpha : \mathbb{N} \to \mathbb{N}$ which admits an infinite number of fixed points $\{b_n\}_{n \in \mathbb{N}}$ in strictly increasing order, such that there exists a function $j : \mathbb{N} \to \mathbb{N}$ such that 
\[ \forall a \in \mathbb{N} : \alpha(a) = b_{j(a)} \]
and
\begin{align*}
\forall a \in \mathbb{N} : j(a) &= n \text{ if } \exists n \in \mathbb{N} : a = b_n \\
j(a) &= r(a)  \text{ if } a > b_0 \text{ and } a \notin \{b_n\}_{n \in \mathbb{N}}
\end{align*}
where $r(a) := \min\{n \in \mathbb{N} : b_n > a \}$, and 
\[ j(0) \geq j(1) \cdots \geq j(b_0 - 1), \]
and such that $\forall I^* \in  \mathcal{P}_{\omega,*}(\mathbb{N}) : G(I^*) = [\![u(I^*),\max\{\alpha(a)\}_{a \in I^*}]\!]$, where $u : \mathcal{P}_{\omega,*}(\mathbb{N}) \to \mathbb{N}$ is a function such that 
\[ \forall I^* \in  \mathcal{P}_{\omega,*}(\mathbb{N}) : u^*(I^*) := \min \{n \leq b_0 : \max\{\alpha(a)\}_{a \in I^*} \geq \alpha(n) \} \leq u(I^*) \leq \min(I^*), \]
then it's clear that $\forall I^* \in  \mathcal{P}_{\omega,*}(\mathbb{N}) : \max(G(I^*)) \in \alpha(I^*)$. To check that $\forall I^* \in  \mathcal{P}_{\omega,*}(\mathbb{N}) : \alpha(G(I^*)) \subseteq G(I^*)$, we take $I^* \in  \mathcal{P}_{\omega,*}(\mathbb{N})$ and $x \in G(I^*)$, and distinguish between the cases $x \geq b_0$ and $x < b_0$. In the first case, since $x \leq \max\{\alpha(a)\}_{a \in I^*}$, $\max\{\alpha(a)\}_{a \in I^*}$ is a fixed point, and $x \geq b_0$, then it follows that $u(I^*) \leq x \leq \alpha(x) \leq \max\{\alpha(a)\}_{a \in I^*}$. In the second case, we have $u^*(I^*) \leq u(I^*) \leq x < b_0$, therefore $\alpha(x) \leq \alpha(u^*(I^*)) \leq \max\{\alpha(a)\}_{a \in I^*}$ since $\alpha$ is non-increasing on $[\![0,b_0-1]\!]$ and by the definition of $u^*(I^*)$. Moreover, since $\alpha(x)$ is a fixed point, it is obvious that $\alpha(x) \geq b_0 > x \geq u(I^*)$. Therefore we are done in both cases and it follows that $\alpha(G(I^*)) \subseteq G(I^*)$.
\end{proof}

\section{Functions with a maximal number of finite internally-1-quasi-invariant supersets}
\label{sectionSuperPreservUnderDirectImageUpToOneElement}

Below, we define two similar predicates (in the sense of propositions depending on mathematical objects) that model the property of direct image preservation by $\phi : I \to I$ of finite supersets of finite subsets of $I$, up to one element, denoted by $P_1(\phi,G,u)$ and $P_2(\phi,G,u)$. We will be concerned with these predicates in the following three subsections.

\begin{definition}
We say that a 3-tuple ($\phi : I \to I, G : \mathcal{P}_{\omega,*}(I) \to \mathcal{P}_{\omega,*}(I), u : \mathcal{P}_{\omega,*}(I) \to I)$ satisfies property $P_1(\phi,G,u)$ if 
\[ \forall I^* \in \mathcal{P}_{\omega,*}(I) : u(I^*) \in G(I^*), I^* \subseteq G(I^*) \text{ and } \phi(G(I^*) \setminus \{u(I^*)\}) \subseteq G(I^*). \]
\end{definition}

\begin{definition}
We say that a 3-tuple ($\phi : I \to I, G : \mathcal{P}_{\omega,*}(I) \to \mathcal{P}_{\omega,*}(I), u : \mathcal{P}_{\omega,*}(I) \to I)$ satisfies property $P_2(\phi,G,u)$ if 
\[ \forall I^* \in \mathcal{P}_{\omega,*}(I) : u(I^*) \in I^*, I^* \subseteq G(I^*) \text{ and } \phi(G(I^*) \setminus \{u(I^*)\}) \subseteq G(I^*). \]
\end{definition}

The following simple lemma is often used (without mention) in the paper.

\begin{lemma}
\label{lemmaRelationsPhiUG}
Let $I$ be an infinite set and ($\phi : I \to I,G : \mathcal{P}_{\omega,*}(I) \to \mathcal{P}_{\omega,*}(I),u : \mathcal{P}_{\omega,*}(I) \to I$) a 3-tuple satisfying $P_1(\phi,G,u)$. Then we have 
\[ \forall I^* \in \mathcal{P}_{\omega,*}(I) : \left( |\mathcal{O}^{+}_\phi(u(I^*))| = \infty \Rightarrow \phi(u(I^*)) \notin G(I^*) \text{ and } \phi(u(I^*)) \notin I^* \right) . \]
\end{lemma}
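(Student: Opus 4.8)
The plan is to argue by contradiction, pitting the finiteness of $G(I^*)$ against the infinitude of the orbit. Fix $I^* \in \mathcal{P}_{\omega,*}(I)$ and, for brevity, abbreviate $u = u(I^*)$ and $G = G(I^*)$; assume the hypothesis $|\mathcal{O}^{+}_\phi(u)| = \infty$. I would first observe that the two desired conclusions collapse to one: since $P_1(\phi,G,u)$ gives $I^* \subseteq G$, establishing $\phi(u) \notin G$ automatically yields $\phi(u) \notin I^*$. So it suffices to prove $\phi(u) \notin G$.

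Suppose, for contradiction, that $\phi(u) \in G$. The key observation is that the only point of $G$ whose image is not already controlled by the hypothesis is $u$ itself, since the third clause of $P_1(\phi,G,u)$ reads exactly $\phi(G \setminus \{u\}) \subseteq G$. Adjoining the one remaining image under the contradiction assumption therefore gives $\phi(G) = \phi(G \setminus \{u\}) \cup \{\phi(u)\} \subseteq G$; that is, $G$ becomes (fully) forward-invariant under $\phi$.

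With forward-invariance in hand, an immediate induction shows $\mathcal{O}^{+}_\phi(u) \subseteq G$: we have $\phi^0(u) = u \in G$ because $u(I^*) \in G(I^*)$ by $P_1$, and if $\phi^n(u) \in G$ then $\phi^{n+1}(u) = \phi(\phi^n(u)) \in \phi(G) \subseteq G$. But $G \in \mathcal{P}_{\omega,*}(I)$ is finite, so $\mathcal{O}^{+}_\phi(u)$ would be finite as well, contradicting the standing assumption that it is infinite. Hence $\phi(u) \notin G$, and since $I^* \subseteq G$ we also get $\phi(u) \notin I^*$, which finishes the argument.

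I do not anticipate a genuine obstacle here; the entire content is the elementary principle that a finite forward-invariant set cannot contain an infinite orbit, combined with the remark that allowing $\phi(u)$ back into $G$ is precisely what turns the ``up to one element'' preservation into full invariance. The only point worth flagging carefully in the write-up is the bookkeeping that both $u \in G$ and $I^* \subseteq G$ are supplied by $P_1$ itself (not by any extra assumption): the former is what lets the induction start, and the latter is what reduces the two required conclusions to the single statement $\phi(u) \notin G$.
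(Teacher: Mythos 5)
Your proof is correct, and it follows the same overall strategy as the paper's: assume $\phi(u(I^*)) \in G(I^*)$, trap the entire forward orbit of $u(I^*)$ inside the finite set $G(I^*)$, and contradict the hypothesis that this orbit is infinite; the reduction of the second conclusion to the first via $I^* \subseteq G(I^*)$ is also identical. The one genuine difference lies in how the inductive step is justified. The paper first invokes lemma \ref{lemmaDistinct} to conclude that $u(I^*), \phi(u(I^*)), \phi^2(u(I^*)), \cdots$ are pairwise distinct, so that each iterate $\phi^n(u(I^*))$ with $n \geq 1$ is guaranteed to lie in $G(I^*) \setminus \{u(I^*)\}$, which is precisely the set whose image $P_1$ controls. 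You instead observe that the contradiction hypothesis itself upgrades the ``up to one element'' invariance to full invariance, namely $\phi(G(I^*)) = \phi(G(I^*) \setminus \{u(I^*)\}) \cup \{\phi(u(I^*))\} \subseteq G(I^*)$, after which the induction is immediate and no distinctness argument is needed. Your variant is marginally more self-contained, since it bypasses lemma \ref{lemmaDistinct} entirely and rests only on the elementary principle that a finite forward-invariant set cannot contain an infinite orbit; the paper's route, on the other hand, makes explicit the distinctness fact that it reuses repeatedly elsewhere in the section. Both arguments are equally rigorous.
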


\begin{proof}
Let $I^* \in \mathcal{P}_{\omega,*}(I)$ such that $|\mathcal{O}^{+}_\phi(u(I^*))| = \infty$. From lemma \ref{lemmaDistinct}, $u(I^*),\phi(u(I^*),\phi^2(u(I^*)),\cdots$ must be distinct. In particular, $\forall n \geq 1, \phi^n(u(I^*)) \neq u(I^*)$. Thus we have $\phi(u(I^*)) \notin G(I^*)$ (otherwise we would have $\mathcal{O}^{+}_\phi(u(I^*)) \subseteq G(I^*)$ which is impossible since $G(I^*)$ is finite and $|\mathcal{O}^{+}_\phi(u(I^*))| = \infty$). Since $I^* \subseteq G(I^*)$, this implies that $\phi(u(I^*)) \notin I^*$.
\end{proof}

We also have

\begin{lemma}
Let $I$ be an infinite set and ($\phi : I \to I,G : \mathcal{P}_{\omega,*}(I) \to \mathcal{P}_{\omega,*}(I),u : \mathcal{P}_{\omega,*}(I) \to I$) a 3-tuple satisfying $P_2(\phi,G,u)$. Then we have
\[ \left( \forall I^* \in \mathcal{P}_{\omega,*}(I) : |\mathcal{O}^{+}_\phi(u(I^*))| = \infty \right) \Rightarrow \left( \forall b \in I : \forall m \in \mathbb{N} : u(\{b,\cdots,\phi^m(b)\} = \phi^m(b) \right). \]
\end{lemma}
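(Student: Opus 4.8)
The plan is to fix $b \in I$ and $m \in \mathbb{N}$, set $I^* = \{b,\phi(b),\dots,\phi^m(b)\} \in \mathcal{P}_{\omega,*}(I)$, and show directly that $u(I^*) = \phi^m(b)$. The argument is the $P_2$-analogue of the one behind Lemma \ref{lemmaRelationsPhiUG}: I want to show that if $u(I^*)$ were any iterate other than the last one, then its (infinite) forward orbit would be forced into the finite set $G(I^*)$, which is absurd.

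First I would record the distinctness that makes the indexing unambiguous. By the antecedent, $|\mathcal{O}^{+}_\phi(u(I^*))| = \infty$, and since $u(I^*) \in I^*$ by $P_2(\phi,G,u)$, we have $u(I^*) = \phi^k(b)$ for some $k \in \{0,\dots,m\}$. As $\mathcal{O}^{+}_\phi(b) \supseteq \mathcal{O}^{+}_\phi(\phi^k(b)) = \mathcal{O}^{+}_\phi(u(I^*))$ is infinite, Lemma \ref{lemmaDistinct} gives that $b,\phi(b),\phi^2(b),\dots$ are pairwise distinct; in particular $I^*$ has exactly $m+1$ elements and the index $k$ with $u(I^*)=\phi^k(b)$ is unique.

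Next I would argue by contradiction, assuming $k < m$. The key first step, and the only place the assumption $k<m$ is used, is that $\phi^{k+1}(b) \in I^* \subseteq G(I^*)$ because $k+1 \le m$. From here I would run an induction showing $\phi^j(b) \in G(I^*)$ for every $j \ge k$: the base cases $j=k$ (where $\phi^k(b)=u(I^*)\in I^*\subseteq G(I^*)$) and $j=k+1$ are in hand, and for $j \ge k+1$ distinctness gives $\phi^j(b) \ne u(I^*)$, so $\phi^j(b) \in G(I^*)\setminus\{u(I^*)\}$ and hence $\phi^{j+1}(b) = \phi(\phi^j(b)) \in G(I^*)$ by the inclusion $\phi(G(I^*)\setminus\{u(I^*)\}) \subseteq G(I^*)$ coming from $P_2(\phi,G,u)$. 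Consequently $\mathcal{O}^{+}_\phi(u(I^*)) = \{\phi^j(b) : j \ge k\} \subseteq G(I^*)$, contradicting that this orbit is infinite while $G(I^*)$ is finite. Therefore $k=m$, i.e. $u(I^*) = \phi^m(b)$.

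The main subtlety to watch is exactly this base step: the preservation clause of $P_2(\phi,G,u)$ says nothing about the image of the removed point $u(I^*)$ itself, so one cannot start the induction at $u(I^*)$. The assumption $k<m$ is what supplies the ``free'' next iterate $\phi^{k+1}(b)$ already lying in $I^*$, from which the orbit can be propagated; if instead $k=m$ this mechanism (correctly) fails, which is consistent with $\phi^m(b)$ being the permissible value of $u(I^*)$. The case $m=0$ is degenerate and immediate, since then $I^*=\{b\}$ forces $u(I^*)=b=\phi^0(b)$.
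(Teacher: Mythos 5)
Your proof is correct and takes essentially the same approach as the paper: the paper simply cites Lemma \ref{lemmaRelationsPhiUG} (which applies because $P_2(\phi,G,u)$ implies $P_1(\phi,G,u)$, as $u(I^*) \in I^* \subseteq G(I^*)$) to conclude $\phi(u(I^*)) \notin I^*$ and hence $u(I^*) = \phi^m(b)$, while you unfold that lemma's orbit-trapping induction inline for the particular set $I^* = \{b,\dots,\phi^m(b)\}$. The mechanism is identical in both cases, namely that if $u(I^*)$ were not the last iterate, the inclusion $\phi(G(I^*)\setminus\{u(I^*)\}) \subseteq G(I^*)$ together with distinctness of the iterates would trap the infinite orbit of $u(I^*)$ inside the finite set $G(I^*)$.
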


\begin{proof}
Suppose that $\forall I^* \in \mathcal{P}_{\omega,*}(I) : |\mathcal{O}^{+}_\phi(u(I^*))| = \infty$. Let  $b \in I$ and $m \in \mathbb{N}$. Since we should have $u(\{b,\cdots,\phi^m(b)\}) \in \{b,\cdots,\phi^m(b)\}$ and $\phi(u(\{b,\cdots, \phi^m(b)\})) \notin \{b,\cdots, \phi^m(b)\}$ by lemma \ref{lemmaRelationsPhiUG}, we have necessarily $u(\{b,\cdots, \phi^m(b)\}) = \phi^m(b)$.
\end{proof}

\subsection{Solution to problem $P_1(\phi,G,u)$}

We first prove the following structural proposition.

\begin{proposition}
\label{propP1}
Let $I$ be an infinite set and $\beta : I \to I$. \\
Let $G : \mathcal{P}_{\omega,*}(I) \to \mathcal{P}_{\omega,*}(I)$ and $v : \mathcal{P}_{\omega,*}(I) \to I$. Then \\
$P_1(\beta,G,v) \Leftrightarrow$ there exist three functions $H,\overline{H}, \widetilde{H} : \mathcal{P}_{\omega,*}(I) \to \mathcal{P}_{\omega}(I)$ such that for all $I^* \in \mathcal{P}_{\omega,*}(I)$,  $I^* \subseteq H(I^*) \cup \overline{H}(I^*) \cup \widetilde{H}(I^*), H(I^*) \subseteq \{a \in I : |\mathcal{O}^{+}_\beta(a)| = +\infty\}$, $\overline{H}(I^*), \widetilde{H}(I^*) \subseteq \{a \in I : |\mathcal{O}^{+}_\beta(a)| < +\infty\}$, and \\
(a) $\exists z \in G(I^*) : |\mathcal{O}^{+}_\beta(z)| = +\infty$ and \\
\[ \begin{cases} H(I^*) &\neq \emptyset \\ \overline{H}(I^*) &= \emptyset \\ G(I^*) &= \left( \bigcup_{a \in \widetilde{H}(I^*)} \mathcal{O}^{+}_\beta(a) \right) \cup \left( \bigcup_{a \in H(I^*)} \{a, \beta(a), \beta^2(a), \cdots, \xi_\beta(H(I^*))\} \right) \\ v(I^*) &= \xi_\beta(H(I^*)) \end{cases} \] 
or \\
(b) $\forall a \in G(I^*) : |\mathcal{O}^{+}_\beta(a)|<+\infty$ and
\[ \begin{cases} H(I^*) &= \emptyset \\ \overline{H}(I^*) &\neq \emptyset \\ G(I^*) &= \left( \bigcup_{a \in \widetilde{H}(I^*)} \mathcal{O}^{+}_\beta(a) \right) \cup \left( \bigcup_{a \in \overline{H}(I^*)} \{a, \beta(a), \beta^2(a), \cdots, v(I^*)\} \right) \end{cases}. \] 
\end{proposition}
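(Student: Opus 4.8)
The plan is to prove the two implications separately, handling the converse (the easy direction) first and then the forward direction, where the case split and the identification $v(I^*)=\xi_\beta(H(I^*))$ carry the real content.

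For the converse I would assume the three functions $H,\overline{H},\widetilde{H}$ are given with the stated orbit-type constraints and one of the forms (a) or (b), and simply verify the three clauses of $P_1(\beta,G,v)$ for a fixed $I^*\in\mathcal{P}_{\omega,*}(I)$. The inclusion $I^*\subseteq G(I^*)$ follows from $I^*\subseteq H(I^*)\cup\overline{H}(I^*)\cup\widetilde{H}(I^*)$ together with the fact that each generator $a$ lies in its own orbit $\mathcal{O}^{+}_\beta(a)$ or at the start of its own segment, both of which sit inside $G(I^*)$. That $v(I^*)\in G(I^*)$ is immediate because $v(I^*)$ is the common endpoint of the segments, and the relevant generating set (namely $H(I^*)$ in case (a) or $\overline{H}(I^*)$ in case (b)) is nonempty. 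For the internal-quasi-invariance clause I would check $\beta(x)\in G(I^*)$ for every $x\in G(I^*)\setminus\{v(I^*)\}$ by cases: if $x$ lies in a full orbit $\mathcal{O}^{+}_\beta(a)$ with $a\in\widetilde{H}(I^*)$ then $\beta(x)$ lies in that same orbit; if $x$ lies in a segment and is not its endpoint, then $\beta(x)$ is the next term of the segment; and the only endpoint is $v(I^*)$ itself, which is excluded. Here I tacitly use that in case (a) the value $v(I^*)=\xi_\beta(H(I^*))$ is defined, i.e.\ $H(I^*)\in D_\beta$, which is part of the given data.

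For the forward direction I would fix $I^*$, write $v=v(I^*)$ and $S=G(I^*)$, and first prove the dichotomy that decides between (a) and (b): \emph{$S$ contains an element of infinite orbit if and only if $v$ has infinite orbit}. The nontrivial direction takes an infinite-orbit $z\in S$ and follows its orbit $z,\beta(z),\beta^2(z),\dots$; since $x\in S\setminus\{v\}$ forces $\beta(x)\in S$, this orbit stays inside $S$ until it meets $v$, and because the orbit is infinite (distinct iterates, by Lemma \ref{lemmaDistinct}) while $S$ is finite, it must meet $v$, so $v\in\mathcal{O}^{+}_\beta(z)$ and $v$ has infinite orbit. The same ``the orbit stays in $S$ until it hits $v$'' argument is the workhorse for the structure formulas: for any $x\in S$ with $v\notin\mathcal{O}^{+}_\beta(x)$ the whole orbit $\mathcal{O}^{+}_\beta(x)$ lies in $S$, while for any $x\in S$ with $v\in\mathcal{O}^{+}_\beta(x)$ the initial segment $\{x,\beta(x),\dots,v\}$ lies in $S$.

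In case (a) ($v$ of infinite orbit) I would set $H(I^*)=\{x\in S:|\mathcal{O}^{+}_\beta(x)|=\infty\}$, $\widetilde{H}(I^*)=\{x\in S:|\mathcal{O}^{+}_\beta(x)|<\infty\}$ and $\overline{H}(I^*)=\emptyset$. By the orbit-tracing argument every element of $H(I^*)$ has $v$ in its orbit, so $v\in\bigcap_{a\in H(I^*)}\mathcal{O}^{+}_\beta(a)$, giving $H(I^*)\in D_\beta$; moreover, by Lemma \ref{lemmaTwoOrbInfAndFinDon'tIntersect} no finite orbit meets $\mathcal{O}^{+}_\beta(v)$, so each $a\in\widetilde{H}(I^*)$ has $v\notin\mathcal{O}^{+}_\beta(a)$ and hence $\mathcal{O}^{+}_\beta(a)\subseteq S$. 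Combined with the segment statement this yields the displayed formula for $G(I^*)$. The key remaining point, which I expect to be the main obstacle, is the identification $v=\xi_\beta(H(I^*))$: writing $W:=H(I^*)$ and using $v\in W$, the intersection $\bigcap_{a\in W}\mathcal{O}^{+}_\beta(a)$ equals $\mathcal{O}^{+}_\beta(v)$ (consistent with Lemma \ref{lemmaIntersOrb}), and for $z=\beta^j(v)$ in this orbit the distinctness of iterates gives $m^z_a=m^v_a+j$ for every $a\in W$, whence $\sum_{a\in W}m^z_a=\sum_{a\in W}m^v_a+j\,|W|$ is strictly minimized at $j=0$; by Definition \ref{defXi} this forces $\xi_\beta(H(I^*))=v$. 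In case (b) ($v$ of finite orbit) the dichotomy makes every element of $S$ have finite orbit, so I set $H(I^*)=\emptyset$, $\overline{H}(I^*)=\{x\in S:v\in\mathcal{O}^{+}_\beta(x)\}$ and $\widetilde{H}(I^*)=\{x\in S:v\notin\mathcal{O}^{+}_\beta(x)\}$; the orbit/segment statements give the displayed formula, $\overline{H}(I^*)\ni v$ is nonempty, and the coverage $I^*\subseteq S=H(I^*)\cup\overline{H}(I^*)\cup\widetilde{H}(I^*)$ holds automatically in both cases. The principal difficulty throughout is bookkeeping the noncanonical selector $\xi_\beta$ and ensuring the minimality computation pins it down to $v$ exactly.
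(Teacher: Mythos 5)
Your proof is correct and follows essentially the same route as the paper's: the same decomposition of $G(I^*)$ into infinite-orbit elements ($H$), finite-orbit elements whose orbit contains $v(I^*)$ ($\overline{H}$), and the remaining finite-orbit elements ($\widetilde{H}$), the same orbit-tracing argument (iterates of any $x \in G(I^*)$ stay in $G(I^*)$ until they reach $v(I^*)$, so an infinite orbit must reach it), and the same case split according to whether $G(I^*)$ contains an element of infinite orbit. The only variation is how you identify $v(I^*)=\xi_\beta(H(I^*))$: you unpack Definition \ref{defXi} directly and show $\sum_{a} m^z_a$ is strictly minimized at $z=v(I^*)$ among $z=\beta^j(v(I^*))$, whereas the paper deduces $v(I^*)\in\mathcal{O}^{+}_\beta(\xi_\beta(H(I^*)))$ and $\xi_\beta(H(I^*))\in\mathcal{O}^{+}_\beta(v(I^*))$ and concludes equality from Lemma \ref{lemmaDistinct} --- both arguments rest on the same facts ($v(I^*)\in H(I^*)$ and distinctness of iterates on infinite orbits), so this is a cosmetic rather than structural difference.
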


\begin{proof}
($\Rightarrow$) : Suppose $P_1(\beta,G,v)$. \\
Let
\[ H : \begin{cases} \mathcal{P}_{\omega,*}(I) &\to \mathcal{P}_{\omega}(I) \\ I^* &\mapsto \{a \in G(I^*) : |\mathcal{O}^{+}_\beta(a)| = +\infty \} \end{cases}, \]
\[ \overline{H} : \begin{cases} \mathcal{P}_{\omega,*}(I) &\to \mathcal{P}_{\omega}(I) \\ I^* &\mapsto \{a \in G(I^*) : |\mathcal{O}^{+}_\beta(a)| < +\infty \text{ and } v(I^*) \in \mathcal{O}^{+}_\beta(a) \} \end{cases} \]
and
\[ \widetilde{H} : \begin{cases} \mathcal{P}_{\omega,*}(I) &\to \mathcal{P}_{\omega}(I) \\ I^* &\mapsto \{a \in G(I^*) : |\mathcal{O}^{+}_\beta(a)| < +\infty \text{ and } v(I^*) \notin \mathcal{O}^{+}_\beta(a) \} \end{cases}. \]
Let $I^* \in \mathcal{P}_{\omega,*}(I)$.
We have $G(I^*) = H(I^*) \cup \overline{H}(I^*) \cup \widetilde{H}(I^*)$. Therefore $I^* \subseteq H(I^*) \cup \overline{H}(I^*) \cup \widetilde{H}(I^*)$. \\
(a) Suppose that $\exists z \in G(I^*) : |\mathcal{O}^{+}_\beta(z)| = +\infty$. So $H(I^*) \neq \emptyset$. \\
Let's show that $v(I^*) = \xi_\beta(H(I^*))$. Assume by way of contradiction that $\exists b \in H(I^*) : v(I^*) \notin \mathcal{O}^{+}_\beta(b)$. Then we must have $\mathcal{O}^{+}_\beta(b) \subseteq G(I^*)$, which is impossible since $|\mathcal{O}^{+}_\beta(b)|=+\infty$ and $G(I^*)$ is finite. So $v(I^*) \in \bigcap_{b \in H(I^*)} \mathcal{O}^{+}_\beta(b) = \mathcal{O}^{+}_\beta(\xi_\beta(H(I^*)))$. In particular, $|\mathcal{O}^{+}_\beta(v(I^*))|=+\infty$ and so $v(I^*) \in H(I^*)$. So there exist $m,n \in \mathbb{N}$ such that $v(I^*) = \beta^m(\xi_\beta(H(I^*)))$ and $\xi_\beta(H(I^*)) = \beta^n(v(I^*))$. This can only work if $m=n=0$ due to lemma \ref{lemmaDistinct} and therefore $v(I^*) = \xi_\beta(H(I^*))$. \\
Now we must have $\overline{H}(I^*) = \emptyset$ (otherwise there exists $a' \in G(I^*)$ such that $|\mathcal{O}^{+}_\beta(a')| < +\infty$ and $v(I^*) = \xi_\beta(H(I^*)) \in \mathcal{O}^{+}_\beta(z) \cap \mathcal{O}^{+}_\beta(a') = \emptyset$ by lemma \ref{lemmaInclusionOfDPhi}, a contradiction). \\
Let's show that $G(I^*) = \left( \bigcup_{a \in \widetilde{H}(I^*)} \mathcal{O}^{+}_\beta(a) \right) \cup \left( \bigcup_{a \in H(I^*)} \{a, \beta(a), \beta^2(a), \cdots, \xi_\beta(H(I^*))\} \right)$. $P_1(\beta,G,v)$ shows that the inclusion $\supseteq$ is true. Conversely, let $a \in G(I^*)$.  We have either $a \in H(I^*)$ or $a \in \widetilde{H}(I^*)$. Therefore the inclusion $\subseteq$ is also true. \\
(b) Suppose that $\forall a \in G(I^*) : |\mathcal{O}^{+}_\beta(a)| < +\infty$. So $H(I^*) = \emptyset$ and $\overline{H}(I^*) \neq \emptyset$. \\
Let's show that $G(I^*) = \left( \bigcup_{a \in \widetilde{H}(I^*)} \mathcal{O}^{+}_\beta(a) \right) \cup \left( \bigcup_{a \in \overline{H}(I^*)} \{a, \beta(a), \beta^2(a), \cdots, v(I^*)\} \right)$. \\
$P_1(\beta,G,v)$ shows that the inclusion $\supseteq$ is true. Conversely, let $a \in G(I^*)$. We have $a \in \widetilde{H}(I^*)$ or $a \in \overline{H}(I^*)$ and so the inclusion $\subseteq$ is true. \\
($\Leftarrow$) : Suppose that $H, \overline{H}$ and $\widetilde{H}$ exist with the desired properties. Let $I^* \in \mathcal{P}_{\omega,*}(I)$. \\
(a) If $\exists z \in G(I^*) : |\mathcal{O}^{+}_\beta(z)| = +\infty$, then $\overline{H}(I^*) = \emptyset$ and so $G(I^*) \supseteq \widetilde{H}(I^*) \cup H(I^*) \supseteq I^*$ using the hypotheses. Moreover, we have clearly $v(I^*) \in G(I^*)$ and $\beta(G(I^*)\setminus \{v(I^*)\}) \subseteq G(I^*)$. \\
(b) If $\forall a \in G(I^*) : |\mathcal{O}^{+}_\beta(a)| < +\infty$, we have $H(I^*) = \emptyset$ and we can check similarly that $P_1(\beta,G,v)$ is true.
\end{proof}

The following proposition characterizes the existential problem $\exists G \exists v : P_1(\beta,G,v)$ to $\widetilde{P}(\beta)$ in terms of $\widetilde{P}(\beta)$.

\begin{proposition}
\label{propCharExistGvP1EquivPtilde}
Let $I$ be an infinite set and $\beta : I \to I$. Then
\[ \left( \exists G : \mathcal{P}_{\omega,*}(I) \to \mathcal{P}_{\omega,*}(I) : \exists v : \mathcal{P}_{\omega,*}(I) \to I : P_1(\beta,G,v) \right)  \Leftrightarrow \widetilde{P}(\beta). \]
\end{proposition}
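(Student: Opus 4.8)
The plan is to prove the two implications separately, leaning on Proposition \ref{propP1} for the structural description of $P_1$ and on Lemma \ref{lemmaCharacOfPTildeInTermsOfDPhi} for the reformulation of $\widetilde{P}(\beta)$ in terms of $D_\beta$.

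For the implication ($\Rightarrow$), I would suppose that some $G$ and $v$ satisfy $P_1(\beta,G,v)$, and take $a,b\in I$ with $|\mathcal{O}^{+}_\beta(a)|=|\mathcal{O}^{+}_\beta(b)|=\infty$; by Remark \ref{remarkPtilde} it suffices to show $\mathcal{O}^{+}_\beta(a)\cap\mathcal{O}^{+}_\beta(b)\neq\emptyset$. Apply the hypothesis to $I^*=\{a,b\}$ and write $S=G(\{a,b\})$, $u=v(\{a,b\})$, so that $S$ is finite, $\{a,b\}\subseteq S$, and $\beta(S\setminus\{u\})\subseteq S$. The key observation is that an infinite orbit cannot be trapped in the finite set $S$ while avoiding the single exceptional point $u$: a short induction shows that if $\beta^k(a)\neq u$ for every $k\in\mathbb{N}$, then $\mathcal{O}^{+}_\beta(a)\subseteq S$, contradicting $|S|<\infty=|\mathcal{O}^{+}_\beta(a)|$ (Lemma \ref{lemmaDistinct}). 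Hence $u\in\mathcal{O}^{+}_\beta(a)$, and symmetrically $u\in\mathcal{O}^{+}_\beta(b)$, so $u$ is a common point of the two orbits. Equivalently, this is case (a) of Proposition \ref{propP1} applied to $\{a,b\}$, which already yields $v(\{a,b\})=\xi_\beta(H(\{a,b\}))\in\mathcal{O}^{+}_\beta(a)\cap\mathcal{O}^{+}_\beta(b)$.

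For the implication ($\Leftarrow$), assume $\widetilde{P}(\beta)$; by Lemma \ref{lemmaCharacOfPTildeInTermsOfDPhi} this gives $\mathcal{P}_{\omega,*}(\{a\in I:|\mathcal{O}^{+}_\beta(a)|=\infty\})\subseteq D_\beta$. I would construct $G$ and $v$ by hand, splitting each $I^*$ according to the orbit types of its elements: write $I^*_\infty$ and $I^*_{<\infty}$ for the subsets of $I^*$ of infinite, resp. finite, orbit. If $I^*_\infty=\emptyset$, set $G(I^*)=\bigcup_{a\in I^*}\mathcal{O}^{+}_\beta(a)$, which is finite, contains $I^*$, and is fully $\beta$-invariant, so any choice $v(I^*)\in I^*$ works. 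If $I^*_\infty\neq\emptyset$, then $I^*_\infty\in D_\beta$, so $\xi_\beta(I^*_\infty)$ is defined and lies in every orbit $\mathcal{O}^{+}_\beta(a)$, $a\in I^*_\infty$, by Definition \ref{defXi}; I then set $v(I^*)=\xi_\beta(I^*_\infty)$ and
\[ G(I^*)=\Big(\bigcup_{a\in I^*_{<\infty}}\mathcal{O}^{+}_\beta(a)\Big)\cup\Big(\bigcup_{a\in I^*_\infty}\{a,\beta(a),\dots,\xi_\beta(I^*_\infty)\}\Big). \]
This is a finite superset of $I^*$: the finite-orbit parts are $\beta$-invariant, and each truncated segment maps forward into itself once the common endpoint $v(I^*)$ is removed, whence $\beta(G(I^*)\setminus\{v(I^*)\})\subseteq G(I^*)$. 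Verifying these containments is routine (the only disjointness point, that no segment element coincides with a finite-orbit element, is Lemma \ref{lemmaTwoOrbInfAndFinDon'tIntersect}); alternatively one feeds the data $H(I^*)=I^*_\infty$, $\overline{H}(I^*)=\emptyset$, $\widetilde{H}(I^*)=I^*_{<\infty}$ (and the symmetric data in the finite case) into the ($\Leftarrow$) direction of Proposition \ref{propP1}.

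The step I expect to be the main obstacle, and the only one genuinely using the hypothesis, is guaranteeing that $v(I^*)=\xi_\beta(I^*_\infty)$ even exists when $I^*$ contains several elements of infinite orbit; this is precisely the passage from pairwise to finite intersection of infinite orbits, supplied by $\widetilde{P}(\beta)$ through Lemma \ref{lemmaCharacOfPTildeInTermsOfDPhi} (itself resting on Corollary \ref{corollaryIntersectionOfFinitelyManyOrbitsIsInfinite}). Everything else is bookkeeping: the ($\Rightarrow$) direction only ever needs two-element sets, and the ($\Leftarrow$) construction is a direct assembly once the common meeting point is known to exist.
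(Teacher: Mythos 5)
Your proposal is correct and follows essentially the same route as the paper: the forward direction applies $P_1$ to the pair $\{a,b\}$ and uses the observation that an infinite orbit cannot remain trapped in the finite set $G(\{a,b\})$ while avoiding $v(\{a,b\})$ (you phrase this directly, showing $v(\{a,b\})$ lies on both orbits, where the paper runs the same trapping argument as a double contradiction under a disjointness assumption), and the reverse direction builds exactly the paper's $G$ and $v$ with $H(I^*)=I^*_\infty$, $\widetilde{H}(I^*)=I^*_{<\infty}$, $v(I^*)=\xi_\beta(I^*_\infty)$, justified by Lemma \ref{lemmaCharacOfPTildeInTermsOfDPhi}. No gaps; the remark about Lemma \ref{lemmaTwoOrbInfAndFinDon'tIntersect} is harmless but not actually needed for the invariance check.
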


\begin{proof}
($\Rightarrow$) : Let $a,b \in I$ such that $|\mathcal{O}^{+}_\beta(a)| = |\mathcal{O}^{+}_\beta(b)| = +\infty$. Assume by way of contradiction that $\mathcal{O}^{+}_\beta(a) \cap \mathcal{O}^{+}_\beta(b) = \emptyset$. So $a \neq b$. Let $I^* = \{a,b\}$. Let $G : \mathcal{P}_{\omega,*}(I) \to \mathcal{P}_{\omega,*}(I)$ and $v : \mathcal{P}_{\omega,*}(I) \to I$ such that $P_1(\beta,G,v)$. Suppose by way of contradiction that $v(I^*) \in \mathcal{O}^{+}_\beta(a)$. Then $v(I^*) \notin \mathcal{O}^{+}_\beta(b)$ since $\mathcal{O}^{+}_\beta(a) \cap \mathcal{O}^{+}_\beta(b) = \emptyset$. Using the hypothesis, we obtain $\mathcal{O}^{+}_\beta(b) \subseteq G(I^*)$ which is impossible since $G(I^*)$ is finite and $\mathcal{O}^{+}_\beta(b)$ is infinite. Therefore $v(I^*) \notin \mathcal{O}^{+}_\beta(a)$. But this is also impossible for the same reason. So we must have $\mathcal{O}^{+}_\beta(a) \cap \mathcal{O}^{+}_\beta(b) \neq \emptyset$. \\
($\Leftarrow$) : Suppose that $\widetilde{P}(\beta)$ is true. Let
\[ H : \begin{cases} \mathcal{P}_{\omega,*}(I) &\to \mathcal{P}_{\omega}(I)  \\ I^* &\mapsto \{a \in I^* : |\mathcal{O}^{+}_\beta(a)| = +\infty \} \end{cases}, \]
\[ \widetilde{H} : \begin{cases} \mathcal{P}_{\omega,*}(I) &\to \mathcal{P}_{\omega}(I)  \\ I^* &\mapsto \{a \in I^* : |\mathcal{O}^{+}_\beta(a)| < +\infty \} \end{cases}, \]
\[ G : \begin{cases} \mathcal{P}_{\omega,*}(I) &\to \mathcal{P}_{\omega,*}(I) \\ I^* &\mapsto \left( \bigcup_{a \in \widetilde{H}(I^*)} \mathcal{O}^{+}_\beta(a) \right) \cup \left( \bigcup_{a \in H(I^*)} \{a, \beta(a), \beta^2(a), \cdots, \xi_\beta(H(I^*))\} \right) \end{cases}, \]
and 
\[ v : \begin{cases} \mathcal{P}_{\omega,*}(I) &\to I \\ I^* &\mapsto \xi_\beta(H(I^*)) \text{ if } H(I^*) \neq \emptyset \\ I^* &\mapsto \text{an arbitrary element of } I^* \text{ otherwise} \end{cases}, \]
where we used the fact $\mathcal{P}_{\omega,*}(\{a \in I : |\mathcal{O}^{+}_\beta(a)|=+\infty\}) \subseteq D_\beta$ (see lemma \ref{lemmaCharacOfPTildeInTermsOfDPhi}). Then $P_1(\beta,G,v)$ holds.
\end{proof}

\subsection{Solution to problem $P_2(\phi,G,u)$}

Proposition \ref{propP1} implies, trivially, the following characterization of $P_2(\phi,G,u)$. It is not known to the authors if $P_2(\phi,G,u)$ admits a simpler description.

\begin{proposition}
\label{propP2}
Let $I$ be an infinite set and $\beta : I \to I$. \\
Let $G : \mathcal{P}_{\omega,*}(I) \to \mathcal{P}_{\omega,*}(I)$ and $v : \mathcal{P}_{\omega,*}(I) \to I$. Then \\
$P_2(\beta,G,v) \Leftrightarrow$ there exist three functions $H,\overline{H}, \widetilde{H} : \mathcal{P}_{\omega,*}(I) \to \mathcal{P}_{\omega}(I)$ such that for all $I^* \in \mathcal{P}_{\omega,*}(I)$,  $I^* \subseteq H(I^*) \cup \overline{H}(I^*) \cup \widetilde{H}(I^*), H(I^*) \subseteq \{a \in I : |\mathcal{O}^{+}_\beta(a)| = +\infty\}$, $\overline{H}(I^*), \widetilde{H}(I^*) \subseteq \{a \in I : |\mathcal{O}^{+}_\beta(a)| < +\infty\}$, and \\
(a) $\exists z \in G(I^*) : |\mathcal{O}^{+}_\beta(z)| = +\infty$ and \\
\[ \begin{cases} H(I^*) &\neq \emptyset \\ \overline{H}(I^*) &= \emptyset \\ G(I^*) &= \left( \bigcup_{a \in \widetilde{H}(I^*)} \mathcal{O}^{+}_\beta(a) \right) \cup \left( \bigcup_{a \in H(I^*)} \{a, \beta(a), \beta^2(a), \cdots, \xi_\beta(H(I^*))\} \right) \\ v(I^*) &= \xi_\beta(H(I^*)) \in I^* \end{cases} \] 
or \\
(b) $\forall a \in G(I^*) : |\mathcal{O}^{+}_\beta(a)|<+\infty$ and
\[ \begin{cases} H(I^*) &= \emptyset \\ \overline{H}(I^*) &\neq \emptyset \\ G(I^*) &= \left( \bigcup_{a \in \widetilde{H}(I^*)} \mathcal{O}^{+}_\beta(a) \right) \cup \left( \bigcup_{a \in \overline{H}(I^*)} \{a, \beta(a), \beta^2(a), \cdots, v(I^*)\} \right) \\ v(I^*) &\in I^* \end{cases}. \] 
\end{proposition}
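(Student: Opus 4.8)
The plan is to derive Proposition \ref{propP2} directly from Proposition \ref{propP1} by isolating the single logical difference between the two predicates. Comparing the two definitions, $P_2(\beta,G,v)$ and $P_1(\beta,G,v)$ impose the identical conditions $I^* \subseteq G(I^*)$ and $\beta(G(I^*) \setminus \{v(I^*)\}) \subseteq G(I^*)$; the only discrepancy is that $P_1$ requires $v(I^*) \in G(I^*)$ whereas $P_2$ requires the stronger $v(I^*) \in I^*$. Since $I^* \subseteq G(I^*)$ is imposed in both, the membership $v(I^*) \in I^*$ entails $v(I^*) \in G(I^*)$. Hence the first step is to record the equivalence
\[
P_2(\beta,G,v) \iff \Big( P_1(\beta,G,v) \text{ and } \forall I^* \in \mathcal{P}_{\omega,*}(I): v(I^*) \in I^* \Big).
\]

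With this reduction in hand, I would invoke Proposition \ref{propP1} to replace $P_1(\beta,G,v)$ by the existence of $H, \overline{H}, \widetilde{H}$ satisfying the case (a)/(b) description, and then append the extra clause $v(I^*) \in I^*$ and push it through each case. In case (a), Proposition \ref{propP1} furnishes $v(I^*) = \xi_\beta(H(I^*))$, so the added clause becomes $\xi_\beta(H(I^*)) \in I^*$, which is precisely the extra requirement displayed in case (a) of the present statement. In case (b), $v(I^*)$ appears only as the common terminal point of the finite orbit-segments and is not otherwise constrained, so the added clause is just $v(I^*) \in I^*$, matching case (b). Both implications then follow at once: for $\Rightarrow$, $P_2$ gives $P_1$ together with $v(I^*) \in I^*$, so \ref{propP1} supplies $H, \overline{H}, \widetilde{H}$ and the membership condition specializes as above; for $\Leftarrow$, the hypothesized $H, \overline{H}, \widetilde{H}$ satisfy the (weaker) premises of \ref{propP1}, yielding $P_1(\beta,G,v)$, while the listed conditions guarantee $v(I^*) \in I^*$, whence $P_2(\beta,G,v)$.

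There is essentially no substantive obstacle here, consistent with the authors' remark that the implication is trivial; the only point demanding a moment's care is verifying that strengthening $v(I^*) \in G(I^*)$ to $v(I^*) \in I^*$ is genuinely the sole difference between the two predicates, and that this strengthening does not disturb the derivation of the structural identities for $G(I^*)$ established in Proposition \ref{propP1}. It does not, since those identities are consequences of the shared inclusion and image conditions rather than of where $v(I^*)$ lies. Consequently the argument reduces to a direct citation of \ref{propP1} together with the bookkeeping of the membership clause across cases (a) and (b).
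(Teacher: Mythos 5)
Your proposal is correct and takes essentially the same route as the paper: the paper gives no separate proof of Proposition \ref{propP2}, stating only that it follows ``trivially'' from Proposition \ref{propP1}, and your reduction $P_2(\beta,G,v) \Leftrightarrow \bigl( P_1(\beta,G,v) \text{ and } \forall I^* : v(I^*) \in I^* \bigr)$ --- valid because $I^* \subseteq G(I^*)$ makes $v(I^*) \in I^*$ the only strengthening --- together with the case-by-case bookkeeping of that membership clause is exactly the intended trivial deduction, made explicit.
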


In what follows, we direct our attention to the existential problem $\exists G \exists u : P_2(\phi,G,u)$, which we try to characterize in terms of a condition on $\phi$. To achieve that, let's first prove

\begin{proposition}
\label{propDbetaxibetainIImpliesExistGvP2}
Let $I$ be an infinite set and $\beta : I \to I$. Then, if $D_\beta = \mathcal{P}_{\omega,*}(I)$ and $\forall I^* \in \mathcal{P}_{\omega,*}(I) : \xi_\beta(I^*) \in I^*$, then $\left( \exists G : \mathcal{P}_{\omega,*}(I) \to \mathcal{P}_{\omega,*}(I) : \exists v : \mathcal{P}_{\omega,*}(I) \to I : P_2(\beta,G,v) \right)$.
\end{proposition}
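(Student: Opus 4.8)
The plan is to exhibit explicit witnesses $G$ and $v$ built out of truncated forward orbits, the natural choice for the selection function being $v(I^*) := \xi_\beta(I^*)$. This is legitimate precisely because of the two hypotheses: $D_\beta = \mathcal{P}_{\omega,*}(I)$ says $\bigcap_{a \in I^*} \mathcal{O}^{+}_\beta(a) \neq \emptyset$ for every $I^* \in \mathcal{P}_{\omega,*}(I)$, so $\xi_\beta$ is defined on all of $\mathcal{P}_{\omega,*}(I)$, while $\xi_\beta(I^*) \in I^*$ is exactly the membership requirement $v(I^*) \in I^*$ that distinguishes $P_2$ from $P_1$. Thus the $v$-clause of $P_2(\beta,G,v)$ is handed to us by the hypotheses, and all the real work will go into choosing $G$.

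For $G$, I would use that $\xi_\beta(I^*) \in \bigcap_{a \in I^*} \mathcal{O}^{+}_\beta(a)$, so each $a \in I^*$ reaches $\xi_\beta(I^*)$ and $m_a := \min\{m \in \mathbb{N} : \beta^m(a) = \xi_\beta(I^*)\}$ is a well-defined natural number. Then I would set
\[ G(I^*) := \bigcup_{a \in I^*} \{a, \beta(a), \beta^2(a), \ldots, \beta^{m_a}(a)\}. \]
Being a finite union of finite sets, $G(I^*)$ is finite and non-empty, hence in $\mathcal{P}_{\omega,*}(I)$, and it visibly contains $I^*$ (each $a$ heads its own segment) together with $v(I^*) = \xi_\beta(I^*)$ (the common terminal element of every segment). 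So the clauses $v(I^*) \in I^*$ and $I^* \subseteq G(I^*)$ are immediate.

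The remaining and genuinely load-bearing clause is $\beta(G(I^*) \setminus \{\xi_\beta(I^*)\}) \subseteq G(I^*)$, which I would verify pointwise. Let $x \in G(I^*)$ with $x \neq \xi_\beta(I^*)$ and pick a segment containing it, say $x = \beta^k(a)$ with $a \in I^*$ and $0 \leq k \leq m_a$. Minimality of $m_a$ forces $k < m_a$ — otherwise $k = m_a$ would give $x = \beta^{m_a}(a) = \xi_\beta(I^*)$, contrary to assumption — so $\beta(x) = \beta^{k+1}(a)$ with $k+1 \leq m_a$ lies in the same segment and hence in $G(I^*)$. The hard part, and the step needing care, is the bookkeeping: an element may lie on several segments and, in the finite-orbit regime, may recur along an orbit; truncating each orbit at its \emph{first} visit to $\xi_\beta(I^*)$ (which is what the minimal exponent $m_a$ achieves) guarantees that $\xi_\beta(I^*)$ occurs in the segment of $a$ only at position $m_a$, so the displayed argument applies to every representation of any $x \neq \xi_\beta(I^*)$. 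This closes the verification and establishes $P_2(\beta,G,v)$.

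Finally, I would remark that both hypotheses are essential and each is used exactly once: $D_\beta = \mathcal{P}_{\omega,*}(I)$ to define $\xi_\beta$, and hence the $m_a$, on all of $\mathcal{P}_{\omega,*}(I)$, and $\xi_\beta(I^*) \in I^*$ to place $v(I^*)$ inside $I^*$. I note in passing that, by Lemma \ref{lemmaInclusionOfDPhi}, the condition $D_\beta = \mathcal{P}_{\omega,*}(I)$ already forces every finite subset to be orbit-homogeneous, so in fact either all orbits of $\beta$ are infinite or all are finite, and correspondingly $G$ falls under case (a) or case (b) of Proposition \ref{propP2}. Since the construction above is uniform in the orbit type, however, I do not expect to need this dichotomy, and the main obstacle remains the careful closure argument rather than any case analysis.
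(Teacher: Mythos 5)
Your proof is correct, but it takes a genuinely different (and more self-contained) route than the paper. The paper's own proof is three lines long because it recycles earlier machinery: it invokes Lemma \ref{lemmaInclusionOfDPhi} to conclude that every $I^* \in D_\beta = \mathcal{P}_{\omega,*}(I)$ is orbit-homogeneous, so that in the construction from the proof of Proposition \ref{propCharExistGvP1EquivPtilde} one has $H(I^*) \in \{I^*,\emptyset\}$; in the all-infinite case that construction takes truncated segments with $v(I^*) = \xi_\beta(H(I^*)) = \xi_\beta(I^*) \in I^*$ (by hypothesis), and in the all-finite case it takes $G(I^*) = \bigcup_{a \in I^*}\mathcal{O}^{+}_\beta(a)$, a finite invariant set, with $v(I^*)$ an arbitrary element of $I^*$. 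The hypotheses thus upgrade the $P_1$ witness to a $P_2$ witness. You instead give a single uniform formula $G(I^*) = \bigcup_{a \in I^*}\{a,\beta(a),\ldots,\beta^{m_a}(a)\}$ with $v(I^*) = \xi_\beta(I^*)$ and verify closure from scratch; your key observation — that if $x \in G(I^*)$, $x \neq \xi_\beta(I^*)$, then \emph{any} representation $x = \beta^k(a)$ forces $k < m_a$, so $\beta(x)$ stays in the same segment — never uses whether orbits are finite or infinite, so no case split, no appeal to Lemma \ref{lemmaInclusionOfDPhi}, and no detour through Proposition \ref{propCharExistGvP1EquivPtilde} is needed. (Your parenthetical worry about recurrence along finite orbits is in fact already absorbed by this observation; even minimality of $m_a$ is not strictly needed for closure, only for the segments to be well-defined finite sets, which any exponent reaching $\xi_\beta(I^*)$ would give.) What each approach buys: the paper's is shorter given the machinery already in place, while yours is independent of the $P_1$ analysis, treats both orbit regimes at once, and in the finite-orbit case produces a smaller superset (segments truncated at $\xi_\beta(I^*)$ rather than whole orbits).
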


\begin{proof}
Suppose that $D_\beta = \mathcal{P}_{\omega,*}(I)$ and $\forall I^* \in \mathcal{P}_{\omega,*}(I) : \xi_\beta(I^*) \in I^*$. Then, by lemma \ref{lemmaInclusionOfDPhi}, and using the previous notations of proposition \ref{propCharExistGvP1EquivPtilde}, we have $\forall I^* \in \mathcal{P}_{\omega,*}(I) : H(I^*) \in \{I^*,\emptyset\}$ and so $v(I^*) \in I^*$. Therefore, there exist $G$ and $v$ with the desired properties.
\end{proof}

\begin{definition}
Let $I$ be a set and $\phi : I \to I$. \\
We define the partial order $\preceq_\phi$ on $I$ by : $a \preceq_\phi b \Leftrightarrow b \in \mathcal{O}^{+}_\phi(a)$.
\end{definition}

The following proposition characterizes, under the condition that all the orbits of $\phi$ are infinite, the existential problem $\exists G \exists u : P_2(\phi,G,u)$ in terms of the orbital structure of $\phi$.

\begin{proposition}
\label{propOrbitsInfiniteGuExistEquiv}
Let $I$ be an infinite set, $\phi : I \to I$ such that $(\forall a \in I) : \mathcal{O}^{+}_\phi(a)$ is infinite. \\
Then the following conditions are equivalent :
\begin{enumerate}
\item $\exists G : \mathcal{P}_{\omega,*}(I) \to \mathcal{P}_{\omega,*}(I) : \exists u : \mathcal{P}_{\omega,*}(I) \to I : P_2(\phi,G,u)$.
\item $\forall I^* \in \mathcal{P}_{\omega,*}(I) : \left( \bigcap_{a \in I^*} \mathcal{O}^{+}_\phi(a) \right) \cap I^* \neq \emptyset$,
\item $D_\phi = \mathcal{P}_{\omega,*}(I)$ and $\forall I^* \in \mathcal{P}_{\omega,*}(I) : \left( \bigcap_{a \in I^*} \mathcal{O}^{+}_\phi(a) \right) \cap I^* = \{\xi_\phi(I^*)\}$.
\item $D_\phi = \mathcal{P}_{\omega,*}(I)$ and $\forall I^* \in \mathcal{P}_{\omega,*}(I) : \xi_\phi(I^*) \in I^*$,
\item $\forall n \in \mathbb{N}^* : \forall (a_1,\cdots,a_n) \in I^n$ which are distinct : $\exists d \in [\![1,n]\!] : \exists (m_1,\cdots,m_n) \in \mathbb{N}^n$ with $m_d = 0$ : $\forall i \in [\![1,n]\!] : a_d = \phi^{m_i}(a_i)$.
\item $\forall (a,b) \in I^2$ with $a \neq b$, $(\exists n \in \mathbb{N} : a = \phi^n(b))$ or $(\exists n \in \mathbb{N} : b = \phi^n(a))$.
\item $\preceq_\phi$ is a total order on $I$.
\end{enumerate}
\end{proposition}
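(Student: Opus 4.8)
The plan is to prove the cycle $(1) \Rightarrow (2) \Rightarrow (3) \Rightarrow (4) \Rightarrow (1)$ and then to dispose of $(5)$, $(6)$, $(7)$ as order-theoretic reformulations of $(2)$ and $(4)$. The implication $(4) \Rightarrow (1)$ is already available: it is exactly Proposition \ref{propDbetaxibetainIImpliesExistGvP2}, since $(4)$ asserts $D_\phi = \mathcal{P}_{\omega,*}(I)$ together with $\xi_\phi(I^*) \in I^*$ for all $I^*$. The implication $(3) \Rightarrow (4)$ is immediate, because $\left(\bigcap_{a \in I^*}\mathcal{O}^{+}_\phi(a)\right) \cap I^* = \{\xi_\phi(I^*)\}$ forces $\xi_\phi(I^*) \in I^*$ (and $(3)$ carries $D_\phi = \mathcal{P}_{\omega,*}(I)$ explicitly). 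So the real content sits in $(1) \Rightarrow (2)$ and $(2) \Rightarrow (3)$.

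For $(1) \Rightarrow (2)$, which I regard as the heart of the argument, I would fix $I^* \in \mathcal{P}_{\omega,*}(I)$ and set $S := G(I^*)$ and $u := u(I^*) \in I^*$. The hypothesis $P_2(\phi,G,u)$ gives $I^* \subseteq S$, $S$ finite, and $\phi(S \setminus \{u\}) \subseteq S$; thus $u$ is the only element of $S$ whose image is permitted to leave $S$. Now take any $x \in S$: if the forward orbit of $x$ never met $u$, an immediate induction ($\phi^k(x) \in S$ and $\phi^k(x) \neq u$ yield $\phi^{k+1}(x) \in S$) would force $\mathcal{O}^{+}_\phi(x) \subseteq S$, contradicting that $\mathcal{O}^{+}_\phi(x)$ is infinite while $S$ is finite. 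Hence $u \in \mathcal{O}^{+}_\phi(x)$ for every $x \in S$, in particular for every $x \in I^*$. Therefore $u \in \bigcap_{a \in I^*}\mathcal{O}^{+}_\phi(a)$, and since also $u \in I^*$, the set $\left(\bigcap_{a \in I^*}\mathcal{O}^{+}_\phi(a)\right) \cap I^*$ is non-empty, which is $(2)$.

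For $(2) \Rightarrow (3)$, note first that $(2)$ gives $\bigcap_{a \in I^*}\mathcal{O}^{+}_\phi(a) \neq \emptyset$, so $I^* \in D_\phi$ and hence $D_\phi = \mathcal{P}_{\omega,*}(I)$; this makes $\xi_\phi(I^*)$ well-defined and, by Lemma \ref{lemmaIntersOrb}, $\bigcap_{a \in I^*}\mathcal{O}^{+}_\phi(a) = \mathcal{O}^{+}_\phi(\xi_\phi(I^*))$. I would then invoke the antisymmetry fact that holds precisely because all orbits are infinite: if $b \in \mathcal{O}^{+}_\phi(a)$ and $a \in \mathcal{O}^{+}_\phi(b)$ then $a = b$ (writing $b = \phi^m(a)$, $a = \phi^n(b) = \phi^{n+m}(a)$ and using Lemma \ref{lemmaDistinct} to get $n+m = 0$). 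Given any $z$ in the intersection that also lies in $I^*$, we have $z \in \mathcal{O}^{+}_\phi(\xi_\phi(I^*))$, while $\xi_\phi(I^*) \in \mathcal{O}^{+}_\phi(a)$ for all $a \in I^*$ gives $\xi_\phi(I^*) \in \mathcal{O}^{+}_\phi(z)$ (take $a = z$); antisymmetry then yields $z = \xi_\phi(I^*)$. Thus the intersection meets $I^*$ in exactly $\{\xi_\phi(I^*)\}$, which is $(3)$.

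It remains to fold in $(5)$, $(6)$, $(7)$. Condition $(5)$ is just $(2)$ rewritten for an enumeration $a_1,\dots,a_n$ of a finite non-empty set: the index $d$ with $m_d = 0$ and $a_d = \phi^{m_i}(a_i)$ for all $i$ is precisely a point of $\left(\bigcap_i \mathcal{O}^{+}_\phi(a_i)\right) \cap \{a_1,\dots,a_n\}$, so $(2) \Leftrightarrow (5)$ is a direct translation. For $(6) \Leftrightarrow (7)$, I would check that $\preceq_\phi$ is always reflexive and transitive and, using the same antisymmetry fact, antisymmetric because orbits are infinite; hence $\preceq_\phi$ is a total order iff any two distinct elements are comparable, which is literally $(6)$. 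Finally $(4) \Leftrightarrow (7)$: from $(4)$, applying $\xi_\phi(\{a,b\}) \in \{a,b\}$ to a pair shows comparability, giving $(7)$; conversely, if $\preceq_\phi$ is a total order then each finite non-empty $I^*$ has a $\preceq_\phi$-maximum $z$, which lies in $\bigcap_{a \in I^*}\mathcal{O}^{+}_\phi(a)$ (so $D_\phi = \mathcal{P}_{\omega,*}(I)$) and equals $\xi_\phi(I^*)$ by the antisymmetry argument, giving $(4)$. The main obstacle is the exit-point argument in $(1) \Rightarrow (2)$; everything else is bookkeeping resting on Lemmas \ref{lemmaDistinct} and \ref{lemmaIntersOrb} and the infiniteness of all orbits.
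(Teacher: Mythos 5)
Your proof is correct, and it shares the paper's backbone: the cycle $(1) \Rightarrow (2) \Rightarrow (3) \Rightarrow (4) \Rightarrow (1)$, with $(4) \Rightarrow (1)$ delegated to Proposition \ref{propDbetaxibetainIImpliesExistGvP2} and with $(2) \Rightarrow (3)$ resting on Lemmas \ref{lemmaIntersOrb} and \ref{lemmaDistinct} essentially as in the paper. It deviates genuinely in two places. First, for $(1) \Rightarrow (2)$ the paper invokes the structural Proposition \ref{propP1} to identify $u(I^*)$ with $\xi_\phi(H(I^*))$ (where $H(I^*) = G(I^*)$ in the all-orbits-infinite case); your exit-point induction (if the orbit of $x \in S$ avoided $u$, then $\phi^k(x) \in S$ and $\phi^k(x) \neq u$ would give $\phi^{k+1}(x) \in S$, trapping an infinite orbit in the finite set $S$) reaches the same conclusion directly, trading the paper's machinery for a short self-contained argument. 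Second, and more substantively, the paper integrates $(5)$, $(6)$, $(7)$ via $(4) \Rightarrow (5) \Rightarrow (2)$ and $(5) \Leftrightarrow (6)$, where $(6) \Rightarrow (5)$ is a proof by contradiction built on a choice function $l(\cdot)$ and a pigeonhole argument on its iterates; you avoid that argument entirely by noting that $(2) \Leftrightarrow (5)$ is a pure translation and by linking $(7)$ to $(4)$ through the $\preceq_\phi$-maximum of the finite totally ordered set $I^*$. Your route replaces the combinatorial cycle argument with the standard fact that a finite non-empty totally ordered set has a greatest element --- the same mathematical content in cleaner form --- and it also makes explicit the antisymmetry of $\preceq_\phi$ (from Lemma \ref{lemmaDistinct} and infiniteness of orbits), a point the paper elides when it treats $(6) \Leftrightarrow (7)$ as purely definitional.
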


\begin{proof}
$(1) \Rightarrow (2) :$ Let $I^* \in \mathcal{P}_{\omega,*}(I)$. We have from proposition \ref{propP1} that ${u(I^*) = \xi_\phi(H(I^*)) \in \cap_{a \in H(I^*)} \mathcal{O}^{+}_\phi(a) = \cap_{a \in G(I^*)} \mathcal{O}^{+}_\phi(a) \subseteq \cap_{a \in I^*} \mathcal{O}^{+}_\phi(a)}$, where we used the fact that here we have necessarily $H(I^*) = G(I^*) \supseteq I^*$. Moreover, we have $u(I^*) \in I^*$. So ${\left( \bigcap_{a \in I^*} \mathcal{O}^{+}_\phi(a) \right) \cap I^* \neq \emptyset}$. \\
$(2) \Rightarrow (3) :$ Let $I^* \in \mathcal{P}_{\omega,*}(I)$. Since $\bigcap_{a \in I^*} \mathcal{O}^{+}_\phi(a) \neq \emptyset$, then $I^* \in D_\phi$. Let $x \in \left( \bigcap_{a \in I^*} \mathcal{O}^{+}_\phi(a) \right) \cap I^*$.  Since $x \in \bigcap_{a \in I^*} \mathcal{O}^{+}_\phi(a) = \mathcal{O}^{+}_\phi(\xi_\phi(I^*))$ by lemma \ref{lemmaIntersOrb} and $\xi_\phi(I^*) = \phi^{m^{\xi_\phi(I^*)}_x}(x)$ because $x \in I^*$ (see definition \ref{defXi}), then $\exists n \geq m^{\xi_\phi(I^*)}_x : x = \phi^n(x)$. Since $\{x, \phi(x), \phi^2(x), \cdots\}$ are distinct (see lemma \ref{lemmaDistinct}), we have necessarily $n=0$ and so $m^{\xi_\phi(I^*)}_x = 0$. This implies $\xi_\phi(I^*) = \phi^{m^{\xi_\phi(I^*)}_x}(x) = x \in I^*$. Since this is true for all $x \in \left( \bigcap_{a \in I^*} \mathcal{O}^{+}_\phi(a) \right) \cap I^*$, we have $\left( \bigcap_{a \in I^*} \mathcal{O}^{+}_\phi(a) \right) \cap I^* = \{\xi_\phi(I^*)\}$. \\
$(3) \Rightarrow (4) :$ This is clear. \\
$(4) \Rightarrow (1) :$ This is proposition \ref{propDbetaxibetainIImpliesExistGvP2}. \\
$(4) \Rightarrow (5) :$ Let $n \in \mathbb{N}^*$ and let $a_1,\cdots,a_n$ be distinct elements of $I$. Applying (4) to $I^* = \{a_1,\cdots,a_n\}$ gives $\xi_\phi(I^*) \in I^*$ which is the content of (5). \\
$(5) \Rightarrow (2) :$ Let $I^* \in \mathcal{P}_{\omega,*}(I)$. Write $I^*$ as $\{a_1,\cdots,a_n\}$ where $n$ is the cardinality of $I^*$. From (5), we have $a_d \in \left( \bigcap_{a \in I^*} \mathcal{O}^{+}_\phi(a) \right) \cap I^*$, and so this last set is not empty. \\
$(5) \Rightarrow (6) :$ This is clear. \\
$(6) \Rightarrow (5) :$ Let $n \in \mathbb{N}^*$ and let $a_1,\cdots,a_n$ be distinct elements of $I$. Assume by way of contradiction that $\forall d \in [\![1,n]\!] : \exists l(d) \in [\![1,n]\!] \setminus \{d\} : a_d \notin \mathcal{O}^{+}_\phi(a_{l(d)})$. By $(6)$, this implies that $\forall d \in [\![1,n]\!] : a_{l(d)} \in \mathcal{O}^{+}_\phi(a_d)$. Besides, we have necessarily $\exists d \in [\![1,n]\!] : \exists e \in \mathbb{N}^* : l^e(d) = d$. Otherwise, we would have $\forall d \in [\![1,n]\!] : \forall e \in \mathbb{N}^* : l^e(d) \neq d$, which implies that $\forall \widetilde{d} \in [\![1,n]\!] : \{\widetilde{d}, l(\widetilde{d}), l^2(\widetilde{d}), \cdots\}$ are distinct. This is impossible since this infinite set is included in $[\![1,n]\!]$. Therefore, $\exists (n_1,\cdots,n_e) \in \mathbb{N}^e : a_d = a_{l^e(d)} = \phi^{n_e}(a_{l^{e-1}(d)}) = \phi^{n_e}(\phi^{n_{e-1}}(a_{l^{e-2}(d)})) = \cdots = \phi^{\sum_{i=1}^e n_i}(a_d)$. Since $\mathcal{O}^{+}_\phi(a_d)$ is infinite, we have by lemma \ref{lemmaDistinct} that $\forall i \in [1,e] : n_i = 0$, which implies in particular that $a_{l(d)} = a_d$, a contradiction. Therefore we have the result.  \\
$(6) \Leftrightarrow (7)$ : This is just the definition of a total order.
\end{proof}

From this proposition, the following corollary characterizing the existential problem $\exists G \exists u : P_2(\phi,G,u)$ with no additionnal assumption can be easily derived.

 \begin{corollary}
\label{corollaryGuExistEquiv}
Let $I$ be an infinite set and $\phi : I \to I$. Denote by $I_\phi^{inf}$ (resp. $I_\phi^{fin}$) the subsets of $I$ consisting of the elements which have an infinite (resp. finite) orbit under $\phi$. Notice that $I_\phi^{inf}$ and $I_\phi^{fin}$ are both invariant under $\phi$ and that $\{ I_\phi^{inf}, I_\phi^{fin} \}$ is a partition of $I$. Then the following conditions are equivalent :
\begin{enumerate}
\item $\exists G : \mathcal{P}_{\omega,*}(I) \to \mathcal{P}_{\omega,*}(I) : \exists u : \mathcal{P}_{\omega,*}(I) \to I : P_2(\phi,G,u)$.
\item $(\preceq_\phi)_{|I_\phi^{inf}}$ is a total order on $I_\phi^{inf}$.
\end{enumerate}
\end{corollary}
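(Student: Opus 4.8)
The plan is to reduce the statement to Proposition \ref{propOrbitsInfiniteGuExistEquiv}, which already settles the equivalence when every orbit is infinite, by exploiting the $\phi$-invariant partition $I = I_\phi^{inf} \sqcup I_\phi^{fin}$. The first thing I would record is that $I_\phi^{inf}$ is either empty or infinite: if it contains a point $a$, then the whole orbit $\mathcal{O}^{+}_\phi(a)$, which is infinite, lies inside $I_\phi^{inf}$ by invariance. When $I_\phi^{inf} = \emptyset$ every orbit is finite, so condition (2) holds vacuously, and condition (1) holds as well, witnessed by $G(I^*) = \bigcup_{a \in I^*} \mathcal{O}^{+}_\phi(a)$ (finite and fully $\phi$-invariant) together with any choice $u(I^*) \in I^*$; thus the equivalence is trivial in that case and I may assume $I_\phi^{inf}$ is infinite. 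A second observation I shall use repeatedly is that, because $I_\phi^{inf}$ is $\phi$-invariant, the restriction $\phi_{|I_\phi^{inf}}$ is a self-map all of whose orbits are infinite, and its orbits coincide with those of $\phi$; hence $\preceq_{\phi_{|I_\phi^{inf}}}$ is exactly $(\preceq_\phi)_{|I_\phi^{inf}}$.

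For $(1) \Rightarrow (2)$, given $G$ and $u$ realizing $P_2(\phi,G,u)$ on $I$, I would restrict them to $I_\phi^{inf}$ by setting, for $I^* \in \mathcal{P}_{\omega,*}(I_\phi^{inf})$, $G'(I^*) := G(I^*) \cap I_\phi^{inf}$ and $u'(I^*) := u(I^*)$. Each required clause of $P_2(\phi_{|I_\phi^{inf}}, G', u')$ is then immediate: $u'(I^*) = u(I^*) \in I^* \subseteq I_\phi^{inf}$; $I^* \subseteq G(I^*) \cap I_\phi^{inf} = G'(I^*)$; and for $x \in G'(I^*) \setminus \{u'(I^*)\}$ one has $\phi(x) \in G(I^*)$ by $P_2$ and $\phi(x) \in I_\phi^{inf}$ by invariance, so $\phi(x) \in G'(I^*)$. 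Since $G'(I^*)$ is finite and non-empty, Proposition \ref{propOrbitsInfiniteGuExistEquiv} (the implication $(1) \Rightarrow (7)$ applied to $\phi_{|I_\phi^{inf}}$) shows $\preceq_{\phi_{|I_\phi^{inf}}}$ is a total order, which by the identification above is precisely condition (2).

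For $(2) \Rightarrow (1)$, Proposition \ref{propOrbitsInfiniteGuExistEquiv} (the implication $(7) \Rightarrow (1)$) furnishes $G'$ and $u'$ realizing $P_2(\phi_{|I_\phi^{inf}}, G', u')$ on $I_\phi^{inf}$, and I would glue them to the trivial finite-orbit construction. Writing $I^*_{inf} = I^* \cap I_\phi^{inf}$ and $I^*_{fin} = I^* \cap I_\phi^{fin}$, I set
\[ G(I^*) := \Big( \bigcup_{a \in I^*_{fin}} \mathcal{O}^{+}_\phi(a) \Big) \cup G'(I^*_{inf}), \]
with the convention that the second term is empty when $I^*_{inf} = \emptyset$, and let $u(I^*) := u'(I^*_{inf})$ when $I^*_{inf} \neq \emptyset$ and $u(I^*) :=$ any element of $I^*$ otherwise. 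The two pieces of $G(I^*)$ are disjoint (one lies in $I_\phi^{fin}$, the other in $I_\phi^{inf}$) and both are finite, so $G(I^*)$ is a finite non-empty superset of $I^*$ and $u(I^*) \in I^*$. For the invariance clause, note that $u(I^*) \in I_\phi^{inf}$ whenever $I^*_{inf} \neq \emptyset$, so removing it never touches the finite-orbit part: any $x$ in $\bigcup_{a \in I^*_{fin}} \mathcal{O}^{+}_\phi(a)$ has $\phi(x)$ in the same orbit, while any $x \in G'(I^*_{inf}) \setminus \{u(I^*)\}$ has $\phi(x) \in G'(I^*_{inf})$ by $P_2$ for the restricted system; in both cases $\phi(x) \in G(I^*)$.

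The routine verifications are the only genuine work; the one point deserving care is the gluing step, where I must ensure that the single exceptional element $u(I^*)$ is always drawn from the infinite-orbit part $I_\phi^{inf}$, so that excising it leaves the fully $\phi$-invariant finite-orbit block $\bigcup_{a \in I^*_{fin}} \mathcal{O}^{+}_\phi(a)$ untouched. This, together with the initial remark that $I_\phi^{inf}$ is empty or infinite (so that Proposition \ref{propOrbitsInfiniteGuExistEquiv} genuinely applies), is what makes the reduction clean.
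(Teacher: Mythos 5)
Your proof is correct, and it is essentially the argument the paper intends: the paper gives no written proof, saying only that the corollary "can be easily derived" from Proposition \ref{propOrbitsInfiniteGuExistEquiv}, and your restriction-and-gluing along the invariant partition $I = I_\phi^{inf} \sqcup I_\phi^{fin}$ is exactly that derivation, with the two genuinely delicate points (that $I_\phi^{inf}$ is empty or infinite, and that the exceptional element $u(I^*)$ can always be taken in $I_\phi^{inf}$ so the finite-orbit block stays fully invariant) handled properly.
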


\begin{lemma}
\label{lemmaOrbPhiITotalOrder}
Let $I$ be a set and $\phi : I \to I$. \\
Suppose that $\exists a \in I : \mathcal{O}^{+}_\phi(a) = I$. \\
Then $\preceq_\phi$ is a total order on $I$.
\end{lemma}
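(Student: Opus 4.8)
The plan is to treat $\preceq_\phi$ as the preorder it manifestly is and then upgrade it to a total order by establishing comparability of every pair, which is exactly where the hypothesis $\mathcal{O}^{+}_\phi(a) = I$ enters. First I would dispose of the order axioms in full generality: reflexivity holds because $x = \phi^0(x) \in \mathcal{O}^{+}_\phi(x)$, and transitivity holds because $y = \phi^p(x)$ together with $z = \phi^q(y)$ yields $z = \phi^{p+q}(x) \in \mathcal{O}^{+}_\phi(x)$; these are precisely the properties already invoked when the definition records $\preceq_\phi$ as an order, so I would mention them only in passing.

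The substance of the proof is totality. Given any $x, y \in I$, the surjectivity hypothesis $\mathcal{O}^{+}_\phi(a) = I$ furnishes $m, n \in \mathbb{N}$ with $x = \phi^m(a)$ and $y = \phi^n(a)$. After relabelling so that $m \leq n$, I would write $y = \phi^n(a) = \phi^{n-m}\bigl(\phi^m(a)\bigr) = \phi^{n-m}(x)$, whence $y \in \mathcal{O}^{+}_\phi(x)$, i.e.\ $x \preceq_\phi y$. Since $x$ and $y$ were arbitrary, any two elements of $I$ are $\preceq_\phi$-comparable; this is the totality condition, and it is the whole mathematical content of the lemma.

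The only point requiring any care is antisymmetry, which is what distinguishes a genuine order from a mere preorder. Here I would appeal to Lemma \ref{lemmaDistinct}: when $I$ is infinite, $\mathcal{O}^{+}_\phi(a) = I$ is infinite, so the iterates $a, \phi(a), \phi^2(a), \dots$ are pairwise distinct, the exponent representing each element of $I$ is unique, and $x \preceq_\phi y \preceq_\phi x$ forces $m = n$ and hence $x = y$; under the identification $n \mapsto \phi^n(a)$ the order $\preceq_\phi$ is then literally $(\mathbb{N}, \leq)$, with $\phi_{|I}$ conjugate to $succ$. I expect no real obstacle beyond this bookkeeping: the heart of the argument is the one-line comparability computation above, and the remaining work is simply recording that surjectivity of a single orbit collapses the entire orbital structure into one $succ$-like chain, on which comparability is automatic.
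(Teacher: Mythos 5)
Your proposal is correct, and on the point the paper actually proves it coincides with the paper's argument: the paper's entire proof is your comparability computation --- write $x = \phi^n(a)$, $y = \phi^m(a)$ for some $m,n \in \mathbb{N}$ and compare exponents. Reflexivity and transitivity are, as you say, immediate and not worth more than a sentence.

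Your treatment of antisymmetry is where you diverge from the paper, and the divergence is in your favor. The paper never addresses antisymmetry at all (its definition simply declares $\preceq_\phi$ to be a ``partial order,'' though in general it is only a preorder), and your caveat ``when $I$ is infinite'' is exactly the right one: for finite $I$ the lemma as literally stated is false. Take $I = \{0,1,2\}$ and $\phi$ the cycle $0 \mapsto 1 \mapsto 2 \mapsto 0$; then $\mathcal{O}^{+}_\phi(0) = I$, yet $0 \preceq_\phi 1$ and $1 \preceq_\phi 0$ with $0 \neq 1$, so $\preceq_\phi$ is a total preorder but not an order. Your appeal to Lemma \ref{lemmaDistinct} (pairwise distinctness of the iterates of $a$ when the orbit is infinite) is precisely what makes antisymmetry work, and it needs $I$ infinite --- a hypothesis absent from the lemma but present in its only application, Proposition \ref{propTripleCharacOrbPhiI}, where every orbit is assumed infinite. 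So your proof is complete exactly on the domain where the statement is true, and the extra bookkeeping you dismiss as routine in fact exposes a gap in the statement (and in the paper's proof, which silently skips antisymmetry) rather than any gap in your argument.
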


\begin{proof}
Let $x,y \in I$. We can write $x = \phi^n(a)$ and $y = \phi^m(a)$ for $m,n \in \mathbb{N}$. If $m \leq n$, we have $y \preceq_\phi x$. Otherwise we have $x \preceq_\phi y$. Hence $\preceq_\phi$ is a total order.
\end{proof}

The following proposition is a supplement to proposition \ref{propOrbitsInfiniteGuExistEquiv} where we also suppose the existence of a cofinite orbit.

\begin{proposition}
\label{propTripleCharacOrbPhiI}
Let $I$ be an infinite set and $\phi : I \to I$ such that $\forall a \in I : \mathcal{O}^{+}_\phi(a)$ is infinite. Then we have
\begin{align*}
&( \exists \widetilde{a} \in I : \mathcal{O}^{+}_\phi(\widetilde{a}) \text{ is cofinite} ) \\
& \text{ and } \left( \exists u : \mathcal{P}_{\omega,*}(I) \to I : \exists G : \mathcal{P}_{\omega,*}(I) \to \mathcal{P}_{\omega,*}(I) : P_2(\phi,G,u) \right) \\
& \Leftrightarrow \left( \exists \widetilde{a} \in I : \mathcal{O}^{+}_\phi(\widetilde{a}) \text{ is cofinite} \right) \text{ and } \left( \preceq_\phi \text{ is a total order on } I \right) \\
& \Leftrightarrow \exists \alpha : (\mathbb{N},\leq) \to (I,\preceq_\phi) \text{ such that } \alpha \text{ is an increasing bijection} \\
& \Leftrightarrow \exists \widetilde{a} \in I : \mathcal{O}^{+}_\phi(\widetilde{a}) = I.
\end{align*}
\end{proposition}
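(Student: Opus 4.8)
My plan is to treat the chain as three separate equivalences, using Proposition \ref{propOrbitsInfiniteGuExistEquiv} to dispatch the first one essentially for free. Indeed, the first and the second assertions share the identical clause ``$\exists \widetilde{a} \in I : \mathcal{O}^{+}_\phi(\widetilde{a})$ is cofinite'', and since $\forall a \in I : \mathcal{O}^{+}_\phi(a)$ is infinite by hypothesis, Proposition \ref{propOrbitsInfiniteGuExistEquiv} tells us that the remaining clause $\exists G \, \exists u : P_2(\phi,G,u)$ is equivalent to $\preceq_\phi$ being a total order on $I$. Hence the equivalence of assertions (1) and (2) holds with no further work, and it suffices to establish the equivalence of (2) with (4) and of (3) with (4).

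\textbf{The equivalences involving a full orbit.} For the implication (4) $\Rightarrow$ (2), if $\mathcal{O}^{+}_\phi(\widetilde{a}) = I$ then its complement is empty, hence finite, so the cofiniteness clause holds, and $\preceq_\phi$ is a total order by Lemma \ref{lemmaOrbPhiITotalOrder}. For (4) $\Rightarrow$ (3) I would use the enumeration map $\alpha : n \mapsto \phi^n(\widetilde{a})$: since $\mathcal{O}^{+}_\phi(\widetilde{a})$ is infinite, Lemma \ref{lemmaDistinct} makes $\alpha$ injective, while $\mathcal{O}^{+}_\phi(\widetilde{a}) = I$ makes it surjective; as $n \leq m$ yields $\alpha(m) = \phi^{m-n}(\alpha(n)) \in \mathcal{O}^{+}_\phi(\alpha(n))$, i.e. $\alpha(n) \preceq_\phi \alpha(m)$, and $\alpha$ is injective into the order $\preceq_\phi$ (total by Lemma \ref{lemmaOrbPhiITotalOrder}), it is a strictly increasing bijection, hence an order isomorphism. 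For the converse (3) $\Rightarrow$ (4), given an increasing bijection $\alpha : (\mathbb{N},\leq) \to (I,\preceq_\phi)$, the element $\widetilde{a} := \alpha(0)$ is the $\preceq_\phi$-minimum of $I$, so $\widetilde{a} \preceq_\phi x$, i.e. $x \in \mathcal{O}^{+}_\phi(\widetilde{a})$, for every $x \in I$, whence $\mathcal{O}^{+}_\phi(\widetilde{a}) = I$.

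\textbf{The main step (2) $\Rightarrow$ (4).} This is where the real content lies. I would assume $\preceq_\phi$ is a total order on $I$ and fix $\widetilde{a}$ with $F := I \setminus \mathcal{O}^{+}_\phi(\widetilde{a})$ finite. First I would observe that every $x \in F$ satisfies $x \preceq_\phi \widetilde{a}$: by totality either $x \preceq_\phi \widetilde{a}$ or $\widetilde{a} \preceq_\phi x$, and the latter would force $x \in \mathcal{O}^{+}_\phi(\widetilde{a})$, contradicting $x \in F$. Now $F \cup \{\widetilde{a}\}$ is a nonempty finite set totally ordered by $\preceq_\phi$, so it has a $\preceq_\phi$-least element $a_0$, and I claim $\mathcal{O}^{+}_\phi(a_0) = I$: from $a_0 \preceq_\phi \widetilde{a}$ we get $\widetilde{a} \in \mathcal{O}^{+}_\phi(a_0)$ and hence $\mathcal{O}^{+}_\phi(\widetilde{a}) \subseteq \mathcal{O}^{+}_\phi(a_0)$, while for each $x \in F$ the minimality of $a_0$ gives $a_0 \preceq_\phi x$, i.e. $x \in \mathcal{O}^{+}_\phi(a_0)$; combining these, $I = \mathcal{O}^{+}_\phi(\widetilde{a}) \cup F \subseteq \mathcal{O}^{+}_\phi(a_0) \subseteq I$. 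I expect this to be the main obstacle: cofiniteness of one orbit does not by itself produce a full orbit, and the total-order hypothesis is precisely what lets one reach back to a least element whose orbit then absorbs both the original cofinite orbit and the finite exceptional set $F$.
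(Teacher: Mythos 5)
Your proof is correct, and while it follows the same overall skeleton as the paper (same decomposition into three equivalences, same easy directions $(4) \Rightarrow (2)$ via Lemma \ref{lemmaOrbPhiITotalOrder}, $(4) \Rightarrow (3)$ via the enumeration $n \mapsto \phi^n(\widetilde{a})$, and $(1) \Leftrightarrow (2)$ by conjoining the common cofiniteness clause with Proposition \ref{propOrbitsInfiniteGuExistEquiv}), your two substantive implications are argued genuinely differently, and in both cases more economically. For $(2) \Rightarrow (4)$, the paper first invokes Lemma \ref{lemmaOrbCofImplications} to conclude that \emph{every} orbit is cofinite, then picks $b$ with $|I \setminus \mathcal{O}^{+}_\phi(b)|$ minimal and derives a contradiction from any $c \notin \mathcal{O}^{+}_\phi(b)$; you instead work with the single finite set $F = I \setminus \mathcal{O}^{+}_\phi(\widetilde{a})$, show every element of $F$ precedes $\widetilde{a}$, and take the $\preceq_\phi$-least element $a_0$ of the finite totally ordered set $F \cup \{\widetilde{a}\}$, whose orbit then absorbs both $F$ and $\mathcal{O}^{+}_\phi(\widetilde{a})$ -- this bypasses Lemma \ref{lemmaOrbCofImplications} and the minimal-complement device entirely. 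For $(3) \Rightarrow (4)$, the paper proves the stronger structural fact that $\alpha(n+1) = \phi(\alpha(n))$ for all $n$ (a somewhat lengthy case analysis showing $\alpha$ conjugates the successor map to $\phi$), whereas you observe that surjectivity plus monotonicity make $\alpha(0)$ the $\preceq_\phi$-minimum of $I$, so that $I \subseteq \mathcal{O}^{+}_\phi(\alpha(0))$ immediately. What the paper's route buys is the extra conjugation information (which is of independent interest but not needed for the stated equivalence); what your route buys is brevity and fewer dependencies. Both arguments are sound.
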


\begin{proof}
Suppose that $\exists \widetilde{a} \in I : \mathcal{O}^{+}_\phi(\widetilde{a}) = I$. Let $\alpha = \begin{cases} \mathbb{N} &\to I \\ n &\mapsto \phi^n(\widetilde{a}) \end{cases}$. Clearly, $\alpha$ is increasing. It is injective by lemma \ref{lemmaDistinct}. It is surjective by the hypothesis. \\
Conversely, suppose that there is such an increasing bijection $\alpha$ from $\mathbb{N}$ to $I$. For all $n \in \mathbb{N}$, we have $\alpha(n+1) = \phi(\alpha(n))$. Indeed, since $\alpha$ is increasing, we have $\exists l(n) \in \mathbb{N} : \alpha(n+1) = \phi^{l(n)}(\alpha(n))$. Suppose by way of contradiction that $l(n) \neq 1$. If $l(n)=0$, we would have $\alpha(n+1)=\alpha(n)$, contradicting the injectivity of $\alpha$. If $l(n) \geq 2$, let $m \in \mathbb{N}$ such that $\alpha(m)=\phi(\alpha(n))$. If $m \geq n+1$, then $\alpha(n+1) \preceq_\phi \alpha(m)$, and so $\phi(\alpha(n)) = \alpha(m) \in \mathcal{O}^{+}_\phi(\alpha(n+1)) = \mathcal{O}^{+}_\phi(\phi^{l(n)}(\alpha(n)))$. This implies $\exists u \in \mathbb{N} : \phi(\alpha(n))  = \phi^{l(n)-1+u}(\phi(\alpha(n)))$ where $l(n)-1+u \geq 1$, contradicting lemma \ref{lemmaDistinct}. If $m \leq n$, then $\alpha(m) \preceq_\phi \alpha(n)$, and so $\alpha(n) \in \mathcal{O}^{+}_\phi(\alpha(m)) = \mathcal{O}^{+}_\phi(\phi(\alpha(n)))$. This implies $\exists u \in \mathbb{N} : \alpha(n) = \phi^{u+1}(\alpha(n))$ where $u+1 \geq 1$, contradicting again lemma \ref{lemmaDistinct}. Hence $l(n)=1$ and the relation $\alpha(n+1)=\phi(\alpha(n))$ is proved. Now consider $\widetilde{a} = \alpha(0)$. Then clearly $\mathcal{O}^{+}_\phi(\widetilde{a}) = \alpha(\mathbb{N}) = I$.
This establishes the equivalence of the last two statements. \\
Using lemma \ref{lemmaOrbPhiITotalOrder}, the last statement clearly implies the second. Let's prove that the second one implies the last one. Using lemma \ref{lemmaOrbCofImplications}, we have that all the orbits of $\phi$ are cofinite. Let $b \in I$ such that $|I \setminus \mathcal{O}^{+}_\phi(b)|$ is minimal. Suppose by way of contradiction that $\mathcal{O}^{+}_\phi(b) \neq I$ and let $c \in I \setminus \mathcal{O}^{+}_\phi(b)$. Since $\preceq_\phi$ is a total order, we have necessarily $b \in \mathcal{O}^{+}_\phi(c)$. Specifically, we have $\exists n \geq 1 : b = \phi^n(c)$. Since $c,\phi(c),\phi^2(c),\cdots$ are distinct by lemma \ref{lemmaDistinct}, we have, thus, $|I \setminus \mathcal{O}^{+}_\phi(c)| < |I \setminus \mathcal{O}^{+}_\phi(b)|$ which is a contradiction. Hence $\mathcal{O}^{+}_\phi(b) = I$. \\
The first and second statements are equivalent by corollary \ref{corollaryOrbPhiIAllOrbCof} and proposition \ref{propOrbitsInfiniteGuExistEquiv}.
\end{proof}

Note that we can also prove explicitly that 
\[ \exists a \in I : \mathcal{O}^{+}_\phi(a) = I \Rightarrow \left( \exists u : \mathcal{P}_{\omega,*}(I) \to I : \exists G : \mathcal{P}_{\omega,*}(I) \to \mathcal{P}_{\omega,*}(I) : P_2(\phi,G,u) \right). \] \\
Indeed, suppose $\exists a \in I : \mathcal{O}^{+}_\phi(a) = I$ and define
\[ n : \begin{cases} \mathcal{P}_{\omega,*}(I) &\to \mathbb{N} \\
                       I^* &\mapsto \max\{n \in \mathbb{N} : \phi^n(a) \in I^* \}
\end{cases} \]
which is well-defined because $\{a,\phi(a),\phi^2(a),\cdots\}$ are distinct (lemma \ref{lemmaDistinct}) and let 
\[ u : \begin{cases} \mathcal{P}_{\omega,*}(I) &\to I \\
                       I^* &\mapsto \phi^{n(I^*)}(a)
\end{cases} \]
and 
\[ G : \begin{cases} \mathcal{P}_{\omega,*}(I) &\to \mathcal{P}_{\omega,*}(I) \\
                       I^* &\mapsto \{a, \phi(a), \cdots, \phi^{n(I^*)}(a)\} 
\end{cases}. \]
Then $u$ and $G$ clearly satisfy the requirements.

\begin{remark}
There are many maps $\phi : \mathbb{N} \to \mathbb{N}$ such that $\exists a \in \mathbb{N} : \mathcal{O}^{+}_\phi(a) = \mathbb{N}$ (not just the successor function). They are conjugated to the successor function and can be found by choosing a bijection $\alpha : \mathbb{N} \to \mathbb{N}$ and setting $\phi = \alpha \circ succ \circ \alpha ^{-1}$. For example, there is the function
\[ \begin{cases} \mathbb{N} &\to \mathbb{N}^* \\
                                        0 &\mapsto 2 \\
                                   2i+1 &\mapsto 2i+4 \text{ for all } i \in \mathbb{N} \\
                                   2i+2 &\mapsto 2i+1 \text{ for all } i \in \mathbb{N}                
\end{cases} \]
\end{remark}

\subsection{An indivisibility property for 3-tuples $(\phi,G,u)$ satisfying $P_2(\phi,G,u)$}

3-tuples ($\phi : I \to I, G : \mathcal{P}_{\omega,*}(I) \to \mathcal{P}_{\omega,*}(I), u : \mathcal{P}_{\omega,*}(I) \to I)$ satisfying $P_2(\phi,G,u)$ enjoy the following indivisibility property with respect to composition of maps $\beta : I \to I$ satisfying $\forall I^* \in \mathcal{P}_{\omega,*}(I) : \exists a \in G(I^*) : |\mathcal{O}^{+}_\beta(a)| = +\infty$ with bijective maps $\alpha : I \to I$ satisfying $\forall I^* \in \mathcal{P}_{\omega,*}(I) : \alpha(G(I^*)) \subseteq G(I^*)$.

\begin{proposition}
\label{propIrreducibilityPhiGu}
Let $I$ be a set and ($\phi : I \to I, G : \mathcal{P}_{\omega,*}(I) \to \mathcal{P}_{\omega,*}(I), u : \mathcal{P}_{\omega,*}(I) \to I)$ a 3-tuple satisfying $P_2(\phi,G,u)$. \\
Then if $\phi = \beta \circ \alpha$ where $\beta : I \to I$ is a map satisfying $\forall I^* \in \mathcal{P}_{\omega,*}(I) : \exists a \in G(I^*) : |\mathcal{O}^{+}_\beta(a)| = +\infty$ and $\alpha : I \to I$ is a bijective map such that $\forall I^* \in \mathcal{P}_{\omega,*}(I) : \alpha(G(I^*)) \subseteq G(I^*)$, then $\alpha = id$ and $\beta = \phi$.
\end{proposition}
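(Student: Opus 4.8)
The plan is to convert the hypotheses on the factorisation $\phi=\beta\circ\alpha$ into a single instance of property $P_1$ for $\beta$, and then to extract rigidity from the two extra assumptions: that $\beta$ always exhibits an infinite orbit inside every $G(I^*)$, and that $\alpha$ is a bijection preserving every $G(I^*)$. First I would observe that, since $\alpha$ is injective and $\alpha(G(I^*))\subseteq G(I^*)$ with $G(I^*)$ finite, one in fact has $\alpha(G(I^*))=G(I^*)$, so that $\alpha$ restricts to a \emph{permutation} of each finite set $G(I^*)$. Setting $v:=\alpha\circ u$, I would then check that $(\beta,G,v)$ satisfies $P_1(\beta,G,v)$: indeed $v(I^*)=\alpha(u(I^*))\in\alpha(G(I^*))=G(I^*)$, and because $\alpha$ is a bijection of $G(I^*)$ we get $\alpha(G(I^*)\setminus\{u(I^*)\})=G(I^*)\setminus\{\alpha(u(I^*))\}=G(I^*)\setminus\{v(I^*)\}$, whence $\beta(G(I^*)\setminus\{v(I^*)\})=\phi(G(I^*)\setminus\{u(I^*)\})\subseteq G(I^*)$.

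Next I would feed this into Proposition \ref{propP1}. The standing hypothesis $\forall I^*:\exists a\in G(I^*):|\mathcal{O}^{+}_\beta(a)|=+\infty$ rules out case (b), so for every $I^*$ we are in case (a): $v(I^*)=\xi_\beta(H(I^*))$ with $H(I^*)=\{a\in G(I^*):|\mathcal{O}^{+}_\beta(a)|=+\infty\}\neq\emptyset$, and by Definition \ref{defXi} this point lies in $\bigcap_{a\in H(I^*)}\mathcal{O}^{+}_\beta(a)$, hence has an infinite $\beta$-orbit. Specializing to singletons $I^*=\{x\}$ forces $u(\{x\})=x$, so $\alpha(x)=v(\{x\})$ has infinite $\beta$-orbit; since $\alpha$ is onto, \emph{every} point of $I$ then has infinite $\beta$-orbit, and consequently $H(I^*)=G(I^*)$ for all $I^*$. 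Reading the singleton case once more gives $\alpha(x)=\xi_\beta(G(\{x\}))\in\mathcal{O}^{+}_\beta(x)$, i.e. $\alpha(x)=\beta^{k(x)}(x)$ for some integer $k(x)\geq 0$.

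The two remaining ingredients are: (i) every $x$ is $\alpha$-periodic, since $\alpha$ permutes the finite set $G(\{x\})\ni x$, so $\exists p\geq 1:\alpha^p(x)=x$; and (ii) iterating $\alpha(x)=\beta^{k(x)}(x)$ telescopes to $\alpha^m(x)=\beta^{K_m}(x)$ with $K_m=\sum_{j=0}^{m-1}k(\alpha^j(x))\geq 0$. Combining them at $m=p$ gives $x=\alpha^p(x)=\beta^{K_p}(x)$; as the $\beta$-orbit of $x$ is infinite, its iterates are distinct by Lemma \ref{lemmaDistinct}, forcing $K_p=0$, so each nonnegative summand vanishes and in particular $k(x)=0$. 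Hence $\alpha(x)=x$ for every $x$, i.e. $\alpha=id$, and therefore $\phi=\beta\circ id=\beta$.

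I expect the main obstacle to be the synthesis in the final paragraph: recognising that the finiteness and $\alpha$-invariance of $G(I^*)$ is precisely what upgrades the ``forward shift along $\beta$-orbits'' description $\alpha(x)=\beta^{k(x)}(x)$ into genuine $\alpha$-periodicity, and then seeing that periodicity together with an infinite $\beta$-orbit forces the shift to be trivial via the telescoping sum. By contrast, the reduction to $P_1$ and the elimination of case (b) are routine once one notices the identity $\alpha(G(I^*)\setminus\{u(I^*)\})=G(I^*)\setminus\{\alpha(u(I^*))\}$. The genuinely delicate aspect is that the argument must be run uniformly over all $I^*$ — using singletons to globalise the ``infinite $\beta$-orbit'' conclusion and using $G(\{x\})$ to obtain periodicity — rather than on a single test set.
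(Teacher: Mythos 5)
Your proof is correct and follows essentially the same route as the paper's: pass to $v=\alpha\circ u$ to get $P_1(\beta,G,v)$, invoke Proposition \ref{propP1} (case (a)) to identify $v(I^*)=\xi_\beta(H(I^*))$, use finiteness of $G(\{x\})$ plus bijectivity of $\alpha$ to get $\alpha$-periodicity, and kill the shift exponents via Lemma \ref{lemmaDistinct}. The only (cosmetic) differences are that you globalize ``infinite $\beta$-orbit'' via surjectivity of $\alpha$ where the paper uses $\alpha$-periodicity, and you close with a telescoping sum $K_p=0$ where the paper combines two orbit relations to force both exponents to vanish.
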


\begin{proof}
First, notice that for all $I^* \in \mathcal{P}_{\omega,*}(I)$, $\alpha$ bijective and $\alpha(G(I^*)) \subseteq G(I^*)$ implies that $\alpha$ induces a bijection on $G(I^*)$. \\
Let $v = \begin{cases} \mathcal{P}_{\omega,*}(I) &\to I \\ I^* &\mapsto \alpha(u(I^*)) \in G(I^*) \end{cases}$. Let $I^* \in \mathcal{P}_{\omega,*}(I)$. Since $\alpha$ induces a bijection on $G(I^*)$, it is easily seen that $\beta(G(I^*)\setminus \{v(I^*)\}) \subseteq G(I^*)$. Since $\exists a \in G(I^*) : |\mathcal{O}^{+}_\beta(a)| = +\infty$, it follows from proposition \ref{propP1} that $v(I^*) = \xi_\beta(H(I^*))$, where we have used the notation of that proposition. Specifying this relation for $I^* = \{b\}$, one has $\alpha(b) = \alpha(u(\{b\})) = v(\{b\}) = \xi_\beta(H(\{b\})$ for all $b \in I$. \\
Since $\forall I^* \in \mathcal{P}_{\omega,*}(I) : \alpha(G(I^*)) \subseteq G(I^*)$, we have $\forall b \in I : \alpha(b) \in \alpha(G(\{b\})) \subseteq G(\{b\})$, and by induction $\mathcal{O}^{+}_\alpha(b) \subseteq G(\{b\})$. Since $G(\{b\})$ is finite and $\alpha$ is bijective, we have from lemma \ref{lemmaDistinct} $\exists n_b \geq 1 : \alpha^{n_b}(b) = b$. This implies in particular that $\mathcal{O}^{+}_\beta(b) = \mathcal{O}^{+}_\beta(\alpha^{n_b}(b)) = \mathcal{O}^{+}_\beta(\xi_\beta(H(\{\alpha^{n_b-1}(b)\}))) = \bigcap_{c \in H(\{\alpha^{n_b-1}(b)\}))} \mathcal{O}^{+}_\beta(c)$ is an orbit arising as a finite intersection of infinite orbits. Thus it is infinite, which is true for all $b \in I$. \\
Now let $b \in I$. We have $|\mathcal{O}^{+}_\beta(b)|=|\mathcal{O}^{+}_\beta(\alpha(b))|=+\infty$. Let $I^* = H(\{\alpha(b)\})$. Since $\alpha(b) \in I^*$, we have $\xi_\beta(I^*) \in \mathcal{O}^{+}_\beta(\alpha(b))$. Hence $\alpha(\alpha(b)) = \xi_\beta(I^*) \in \mathcal{O}^{+}_\beta(\alpha(b))$. So $\forall b \in I : \exists k_b \in \mathbb{N} : \alpha(\alpha(b)) = \beta^{k_b}(\alpha(b))$. We have $\forall b \in I :  \alpha(\alpha(\alpha(b))) = \beta^{k_{\alpha(b)}}(\alpha(\alpha(b))) = \beta^{k_{\alpha(b)}+k_b}(\alpha(a))$. In general, $\forall b \in I : \forall n \geq 1 : \alpha^n(b) \in \mathcal{O}^{+}_\beta(\alpha(b))$. Let $b \in I$ and choose $n = n_{\alpha(b)} \geq 1$. Then $\alpha(b) = \alpha^{n_{\alpha(b)}}(\alpha(b)) \in \mathcal{O}^{+}_\beta(\alpha(\alpha(b)))$, and so $\exists m \in \mathbb{N} : \alpha(b) = \beta^m(\alpha(\alpha(b)))$. We saw previously that $\alpha(\alpha(b)) \in \mathcal{O}^{+}_\beta(\alpha(b))$, so $\exists l \in \mathbb{N} : \alpha(\alpha(b)) = \beta^l(\alpha(b))$. Combining the two results, we have $\alpha(b) = \beta^{m+l}(\alpha(b))$ and so $m=l=0$ since $|\mathcal{O}^{+}_\beta(\alpha(b))| = +\infty$. In turn, this implies that $\alpha(\alpha(b)) = \alpha(b)$ and so $\alpha(b) = b$ since $\alpha$ is a bijection. \\
Hence $\alpha = id$ and $\beta = \phi$.
\end{proof}

\section*{Acknowledgement}
The first author is financially supported by the \textit{Centre National pour la Recherche Scientifique et Technique} of Morocco.

\nocite{*}
\bibliographystyle{plain}
\bibliography{references}

\begin{thebibliography}{10}

\bibitem{AlaeiyanRazzaghmanieshi}
M.~Alaeiyan and B.~Razzaghmanieshi.
\newblock Permutation groups with bounded movement having maximum orbits.
\newblock {\em Proc. Indian Acad. Sci. (Math. Sci.)}, 122(2):175--179, 2012.

\bibitem{AlaeiyanYoshiara}
M.~Alaeiyan and S.~Yoshiara.
\newblock Permutation groups of minimal movement.
\newblock {\em Arch. Math.}, 85(3):211--226, 2005.

\bibitem{Brailovsky}
L.~Brailovsky.
\newblock Structure of quasi-invariant sets.
\newblock {\em Arch. Math.}, 59(4):322--326, 1992.

\bibitem{BrailovskyPasechenikPraeger}
L.~Brailovsky, D.~V. Pasechenik, and C.~E. Praeger.
\newblock Subsets close to invariant subsets for group actions.
\newblock {\em Proceedings of the American Mathematical Society},
  123(8):2283--2295, 1995.

\bibitem{BrinStuck}
M.~Brin and G.~Stuck.
\newblock {\em Introduction to Dynamical Systems}.
\newblock Cambridge University Press, 2002.

\bibitem{ElIdrissiKabbaj}
N.~E. Idrissi and S.~Kabbaj.
\newblock Independence, infinite dimension, and operators.
\newblock {\em Moroccan Journal of Pure and Applied Analysis}, 9(1):86--96,
  2023.

\bibitem{KatokHasselblatt}
A.~Katok and B.~Hasselblatt.
\newblock {\em Introduction to the modern theory of dynamical systems}.
\newblock Cambridge University Press, 1995.

\bibitem{Kharazishvili}
A.~Kharazishvili.
\newblock On some applications of almost invariant sets.
\newblock {\em Bulletin of TICMI}, 23(2):115--124, 2019.

\bibitem{KimKim}
P.~S. Kim and Y.~Kim.
\newblock Certain groups with bounded movement having the maximal number of
  orbits.
\newblock {\em Journal of Algebra}, 252(1):74--83, 2002.

\bibitem{Praeger}
C.~E. Praeger.
\newblock On permutation groups with bounded movement.
\newblock {\em Journal of Algebra}, 144(2):436--442, 1991.

\end{thebibliography}

\Addresses

\end{document}